\documentclass[reqno,11pt]{amsart}

\usepackage[letterpaper,margin=1in,footskip=0.25in]{geometry}
\usepackage[dvipsnames]{xcolor}
\usepackage{amsfonts,amscd,amsmath,amsthm,amssymb,tikz-cd,enumitem}
\definecolor{chianti}{rgb}{0.6,0,0}
\definecolor{meretale}{rgb}{0,0,.6}
\definecolor{leaf}{rgb}{0,.35,0}
\usepackage{graphicx}
\usepackage{mathptmx}
\usepackage{verbatim}   
  
\usepackage{stmaryrd}
\usepackage{fancyvrb,newverbs}
\usepackage[colorlinks=true, hyperindex, citecolor=meretale, urlcolor=leaf, linkcolor=chianti]{hyperref}
\usepackage[capitalize]{cleveref}
\usepackage{tikz,tikz-cd}
\usetikzlibrary{arrows,matrix}
\theoremstyle{definition}

\newtheorem{Theoremx}{Theorem}
 
\newtheorem{thm}{Theorem}[section]
\Crefname{thm}{Theorem}{Theorems}
\Crefname{Theoremx}{Theorem}{Theorems}
\Crefname{figure}{Figure}{Figures}
\Crefname{claim}{Claim}{Claims}
\newtheorem{xmp}[thm]{Example}
\newtheorem{dff}[thm]{Definition}
\newtheorem{lem}[thm]{Lemma}

\newtheorem{rmk}[thm]{Remark}
\newtheorem{prp}[thm]{Proposition}

\newtheorem{qst}[thm]{Question}

\definecolor{cverbbg}{gray}{0.93}

\newcommand{\uline}[1]{\underline{#1}}

\newcommand{\fbp}[1]{\left[#1 \right]}

\def\R{\mathbf R}

\def\Hom{\operatorname{Hom}}

\def\fa{\mathfrak{a}}
\def\fb{\mathfrak{b}}
\def\fm{\mathfrak{m}}
\def\fn{\mathfrak{n}}

\def\fp{\mathfrak{p}}
\def\fq{\mathfrak{q}}
\def\fQ{\mathfrak{Q}}

\newcommand{\hsl}{\operatorname{HSL}}

\newcommand{\rperf}{R_{\operatorname{perf}}}

\newcommand{\mfq}{\fq}

\newcommand{\mfp}{\mathfrak{p}}

\newcommand{\ts}{\textsuperscript}

\newcommand{\perf}{\mathrm{perf}}
\newcommand{\qis}{\simeq_{\mathrm{qis}}}

\newcommand{\im}{\operatorname{im}}

\newcommand{\Spec}{\operatorname{Spec}}

\renewcommand{\phi}{\varphi}

\newcommand{\id}{\operatorname{id}}

\def\R{\mathbf{R}}

\newcommand{\sbt}{\,\begin{picture}(-1,1)(-1,-2)\circle*{2}\end{picture}\ }

\def\xra{\xrightarrow}
\def\ux{\underline{x}}

\colorlet{DG}{green!50!black}
\colorlet{DO}{orange!50!black}
\colorlet{DR}{red!50!black}
\colorlet{DB}{blue!50!black}
\colorlet{DY}{yellow!50!black}

\newcommand{\qlf}{(\mfq^{\lim})^F}

\def\qs{\fq^{F\textrm{-lim}}}
\def\qlim{\fq^{\textrm{lim}}}

\def\qi{\fq_{\leq i}}
\def\qin{\fq_{\leq i+1}}

\def\zdF{0^F_{H^d_\fm(R)}}

\makeatletter
\renewcommand*{\eqref}[1]{
  \hyperref[{#1}]{\textup{\tagform@{\ref*{#1}}}}
}
\makeatother

\def\perf{\textrm{perf}}

\tikzset{
    labl/.style={anchor=south, rotate=270, inner sep=.5mm}
}

\usepackage[backend=biber, style=alphabetic, datamodel=mrnumber,sorting=anyt,maxalphanames=5,maxbibnames=99]{biblatex}
\usepackage{hyperref}
\usepackage{xurl}
\hypersetup{breaklinks=true}

\DeclareFieldFormat{mrnumber}{
  MR\addcolon\space
  \ifhyperref
    {\href{http://www.ams.org/mathscinet-getitem?mr=#1}{\nolinkurl{#1}}}
    {\nolinkurl{#1}}}

\renewbibmacro*{doi+eprint+url}{
  \iftoggle{bbx:doi}
    {\printfield{doi}}
    {}
  \newunit\newblock
  \printfield{mrnumber}
  \newunit\newblock
  \iftoggle{bbx:eprint}
    {\usebibmacro{eprint}}
    {}
  \newunit\newblock
  \iftoggle{bbx:url}
    {\usebibmacro{url+urldate}}
    {}}

\renewbibmacro{in:}{}

\title{A Buchsbaum theory for Frobenius closure}

\author[Goel]{Kriti Goel}
\thanks{Goel was supported by grant CEX2021-001142-S (Excelencia Severo Ochoa) funded by MICIU/AEI/10.13039/501100011033 (Ministerio de Ciencia, Innovaci\'on y Universidades, Spain).}
\address{Kriti Goel: Department of Mathematics, University of Missouri-Columbia, Columbia, MO 65211 USA}
\email{kritigoel.maths@gmail.com}
\urladdr{\url{https://sites.google.com/view/kritigoel/}}

\author[Maddox]{Kyle Maddox}
\address{Kyle Maddox: Department of Mathematical Sciences, University of Arkansas, 850 West Dickson Street, Fayetteville, Arkansas 72701, United States}
\email{kmaddox@uark.edu}
\urladdr{\url{https://sites.google.com/view/kylemaddox/}}

\author[Miller]{Lance Edward Miller}
\address{Lance Edward Miller: Department of Mathematical Sciences, University of Arkansas, 850 West Dickson
Street, Fayetteville, Arkansas 72701, United States}
\email{lem016@uark.edu}
\urladdr{\url{http://www.lemiller.net/}}

\author[Quy]{Pham Hung Quy}
\thanks{Quy was partially supported by a fund of Vietnam National Foundation for Science and Technology Development (NAFOSTED)}
\address{Pham Hung Quy: Department of Mathematics, FPT University, Hanoi, Vietnam}
\email{quyph@fe.edu.vn}

\author[Simpson]{Austyn Simpson}
\address{Austyn Simpson: Department of Mathematics, Bates College, 3 Andrews Rd, Lewiston, ME 04240 USA}
\email{asimpson2@bates.edu}
\urladdr{\url{https://austynsimpson.github.io/}}

\thanks{Keywords: Buchsbaum rings, Frobenius closure, tight closure}

\begin{document}

\begin{abstract}
We give a partial characterization for when the difference $e(\fq)-\ell_R(R/\fq^F)$ is independent of the choice of parameter ideal $\fq\subseteq R$ in an excellent equidimensional local ring $(R,\fm)$ of prime characteristic $p>0$. Here, $\fq^F$ is the Frobenius closure of $\fq$ and $e(\fq)$ denotes the Hilbert--Samuel multiplicity of $\fq$. In addition to ideal-theoretic equivalences, our characterization involves the derived category and is motivated by Schenzel's criterion of the Buchsbaum property as well as similar results of Ma--Quy in the setting of tight closure. 
\end{abstract}

\subjclass[2020]{13A35, 13H10, 13D45}

\maketitle

\section{Introduction}

The Buchsbaum condition defines a singularity type for Noetherian local rings that has enjoyed a long history of broad influence in algebraic geometry and commutative algebra \cite{Got80, Sch82, CTS78, SV78}; for a more thorough introduction, see the monograph \cite{SV86}. A local ring $(R,\fm)$ is \emph{Buchsbaum} if the quantity
\begin{align}
\ell_R(R/\fq) - e(\fq)\label{intro-1}
\end{align} (where $\fq\subseteq R$ is an ideal generated by a system of parameters, and $e(\fq)$ is its multiplicity) does not depend on the choice of parameter ideal. This is a minimal generalization of the Cohen--Macaulay property, which is equivalent to the equality $e(\fq) = \ell_R(R/\fq)$ for all parameter ideals $\fq$. These rings have isolated non-Cohen--Macaulay loci, and putting uniform constraints on \eqref{intro-1} also imposes strong restrictions on the structure of their Hilbert functions \cite{GN03,GO10}. The Buchsbaum condition can be reformulated in a number of ways, and each perspective has shaped subsequent investigations. 
\begin{thm}\label{intro:thm-buchsbaum}
    Let $(R,\fm,k)$ be a $d$-dimensional Noetherian local ring and let $\fq\subseteq R$ be an ideal generated by a system of parameters. The following conditions are equivalent.
\begin{enumerate}[label=(\alph*)]
    \item $R$ is Buchsbaum.\label{Schenzel:a}
    \item The quantity $\ell_R(\fq^{\lim}/\fq)$ is independent of $\fq$.\label{Schenzel:b}
    \item For every system of parameters $x_1,\ldots, x_d\in R$, the containment $\fm ((x_1,\ldots, x_i):_R x_{i+1})\subseteq (x_1,\ldots, x_i)$ holds for every $i<d$. \label{Schenzel:c}
    \item The truncation $\tau^{<d}\mathbf{R}\Gamma_\fm R$ is quasi-isomorphic to a complex of $k$-vector spaces.\label{Schenzel:d}
\end{enumerate}
\end{thm}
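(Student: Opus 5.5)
This is classical (Stückrad–Vogel, Schenzel, Trung), so the plan is to assemble the standard ingredients rather than prove everything from scratch. Throughout, fix the limit closure
\[
\fq^{\lim}=\bigcup_{t}\bigl((x_1^{t+1},\ldots,x_d^{t+1}):_R(x_1\cdots x_d)^t\bigr),
\]
which is the kernel of the natural map $R/\fq\to H^d_\fm(R)$. The key classical identity is that for a standard system of parameters (one lying in a sufficiently high power of $\fm$),
\[
\ell_R(R/\fq)-e(\fq)=\sum_{i=0}^{d-1}\binom{d-1}{i}\ell_R(H^i_\fm(R)),
\]
and more generally $\ell_R(R/\fq)-e(\fq)=\ell_R(\fq^{\lim}/\fq)+\bigl(\ell_R(R/\fq^{\lim})-e(\fq)\bigr)$.

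\textbf{\ref{Schenzel:a}$\Leftrightarrow$\ref{Schenzel:b}.} I would first show that $\ell_R(R/\fq^{\lim})-e(\fq)$ is essentially independent of $\fq$: it is $0$ after passing to $R/H^0_\fm(R)$ and using the unmixed part, by Cuong--Nhan / Cuong--Quy type results. With that in hand, independence of $\ell_R(R/\fq)-e(\fq)$ is equivalent to independence of $\ell_R(\fq^{\lim}/\fq)$. I expect this step to be the main obstacle. To get it I would compute $\ell_R(R/\fq^{\lim})$ via the Koszul/Čech comparison $\varinjlim_t H_0(\underline{x}^t;R)=H^d_\fm(R)$, and show that its difference from $e(\fq)$ is governed by the lower local cohomology in a way that is parameter-independent for sufficiently deep parameter ideals. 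Failing that, I would cite Goto--Sakurai's treatment of limit closures.

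\textbf{\ref{Schenzel:a}$\Rightarrow$\ref{Schenzel:c}.} This is the Stückrad--Vogel argument. Buchsbaum means every s.o.p.\ is a weak sequence, which is exactly \ref{Schenzel:c}. The proof goes by comparing $\ell_R(R/\fq)-e(\fq)$ for $\fq$ and for $(x_1,\ldots,x_i,x_{i+1}y,\ldots)$ with $y\in\fm$. The difference equals the length of $\bigl((x_1,\ldots,x_i):x_{i+1}\bigr)/(x_1,\ldots,x_i)$ modulo the multiplication by $y$. Constancy forces multiplication by $y$ to kill this module for all $y\in\fm$.

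\textbf{\ref{Schenzel:c}$\Rightarrow$\ref{Schenzel:a}.} Weak sequences yield the formula
\[
\ell_R(R/\fq)-e(\fq)=\sum_{i=0}^{d-1}\binom{d-1}{i}\ell_R(H^i_\fm(R)),
\]
which is independent of $\fq$. One proves it by induction on $d$, passing to $R/x_1R$. The weak-sequence property gives $0:x_1\subseteq H^0_\fm(R)$ and split short exact sequences $0\to H^i_\fm(R)\to H^i_\fm(R/x_1)\to H^{i+1}_\fm(R)\to0$ for $i<d-1$.

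\textbf{\ref{Schenzel:d}.} This is Schenzel's criterion. For \ref{Schenzel:a}$\Rightarrow$\ref{Schenzel:d}, Buchsbaum implies that the natural maps $\mathrm{Ext}^i_R(k,R)\to H^i_\fm(R)$ are surjective for $i<d$, so each $H^i_\fm(R)$ is a $k$-vector space. To get quasi-isomorphism to a complex of $k$-vector spaces, rather than just the cohomology being $k$-spaces, one shows that the truncation $\tau^{<d}\mathbf{R}\Gamma_\fm R$ lies in the image of $D(k)\to D(R)$. Surjectivity from $\mathbf{R}\Hom_R(k,R)$ provides the splitting, and one can argue with the map $\mathbf{R}\Hom_R(k,R)\to\mathbf{R}\Gamma_\fm R$ inductively on the number of nonzero cohomologies. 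Conversely, if $\tau^{<d}\mathbf{R}\Gamma_\fm R$ is quasi-isomorphic to a complex of $k$-vector spaces, then the Koszul complex on $\fq$ tensored with it has homology computable over $k$, where $x_i$ acts by $0$. Feeding this into the triangle
\[
\tau^{<d}\mathbf{R}\Gamma_\fm R\to\mathbf{R}\Gamma_\fm R\to H^d_\fm(R)[-d]
\]
yields the weak-sequence condition \ref{Schenzel:c}, or directly the $\fq$-independent length formula.
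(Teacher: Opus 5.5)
The paper gives no proof of this theorem; it is quoted from the literature (St\"uckrad--Vogel, Goto, Schenzel), so your plan has to stand on its own. Your treatment of (a)$\Leftrightarrow$(c) and of (d) follows the standard sources and is acceptable modulo those citations. For ``(d) yields the length formula directly'' you should say explicitly that the triangle gives $H_j(\ux;R)\cong h^{-j}\bigl(K_\bullet(\ux;R)\otimes\tau^{<d}\R\Gamma_\fm R\bigr)$ for $j\ge 1$, and that you feed this into Serre's formula $e(\fq)=\sum_j(-1)^j\ell_R(H_j(\ux;R))$. The genuine gap is (a)$\Leftrightarrow$(b).

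\textbf{The false claim.} There you reduce everything to the claim that $\ell_R(R/\fq^{\lim})-e(\fq)$ is essentially independent of $\fq$, indeed zero after killing $H^0_\fm(R)$. This is false. Cuong--Nhan give $e(\fq)=\ell_R(R/\fq^{\lim})$ only when the unmixed part is Cohen--Macaulay; this is exactly how \cite{CN03} is used in Proposition~\ref{prp:CM}.

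Concretely, take $R=k\llbracket x,y,z,w\rrbracket/(xz,xw,yz,yw)$ from Example~\ref{xmp: F-Buchs but not CMFI}. It is Buchsbaum with $H^0_\fm(R)=0$ and $H^1_\fm(R)\cong k$. For $\fq=(x+z,y+w)$ one has $x,z\in(x+z):(y+w)$ and $y,w\in(y+w):(x+z)$. Hence $\fq^{\lim}=\fm$, so $\ell_R(R/\fq^{\lim})=1\neq 2=e(\fq)$.

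In general, for a generalized Cohen--Macaulay ring one has $e(\fq)-\ell_R(R/\fq^{\lim})\le C(R):=\sum_{i=1}^{d-1}\binom{d-1}{i-1}\ell_R(H^i_\fm(R))$, with equality exactly for standard $\fq$ \cite[Theorem 5.1]{CHL99}. So this quantity is nonzero as soon as some $H^i_\fm(R)$ with $0<i<d$ is nonzero. It also genuinely varies with $\fq$ in any generalized Cohen--Macaulay ring that is not Buchsbaum. Its constancy is itself a Buchsbaum criterion (cf.\ \cite[Remark 3.5(2)]{MQ22}), so your reduction is circular. Restricting to ``sufficiently deep'' $\fq$ does not help: those are standard in every generalized Cohen--Macaulay ring, while all the content of (b)$\Rightarrow$(a) concerns the non-standard ones.

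\textbf{The fix.} What you need is two-sided control valid for \emph{every} parameter ideal. Put $I(R)=\sum_{i=0}^{d-1}\binom{d-1}{i}\ell_R(H^i_\fm(R))$.

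For (b)$\Rightarrow$(a): since $e(\fq)\ge\ell_R(R/\fq^{\lim})$ \cite[Lemma 2.3]{CHL99}, we get $\ell_R(R/\fq)-e(\fq)\le\ell_R(\fq^{\lim}/\fq)$, which is bounded, so $R$ is generalized Cohen--Macaulay. Evaluating at a standard $\fq$ identifies the constant as $I(R)+C(R)=\sum_{i<d}\binom{d}{i}\ell_R(H^i_\fm(R))$. Hence for every $\fq$
\[
\ell_R(R/\fq)-e(\fq)=\bigl(I(R)+C(R)\bigr)-\bigl(e(\fq)-\ell_R(R/\fq^{\lim})\bigr)\ge I(R).
\]
Combined with the classical bound $\ell_R(R/\fq)-e(\fq)\le I(R)$ for generalized Cohen--Macaulay rings, this forces $\ell_R(R/\fq)-e(\fq)=I(R)$ for all $\fq$, so $R$ is Buchsbaum.

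For (a)$\Rightarrow$(b), every system of parameters in a Buchsbaum ring is standard, so both bounds are equalities. Alternatively, use Goto's formula $\fq^{\lim}=\fq+\sum_i(x_1,\ldots,\widehat{x_i},\ldots,x_d):x_i$ \cite{Got83}. Or use your own Koszul argument: tensoring $\tau^{<d}\R\Gamma_\fm R\to\R\Gamma_\fm R\to H^d_\fm(R)[-d]$ with $K_\bullet(\ux;R)$ gives $h^0\bigl(K_\bullet(\ux;R)\otimes\tau^{<d}\R\Gamma_\fm R\bigr)\cong\fq^{\lim}/\fq$. Under (d) this has length $\sum_{i<d}\binom{d}{i}\ell_R(H^i_\fm(R))$, so (a)$\Rightarrow$(d)$\Rightarrow$(b).
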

The final condition \ref{Schenzel:d} is due to Schenzel \cite{Sch82}, and through this lens it is clear that the lower local cohomology modules $H^{i<d}_\fm(R)$ of $R$ are finite dimensional $k$-vector spaces (see \cref{sec:truncations} for details about the truncation operations).

In prime characteristic, the Buchsbaum property displays striking relationships with $F$-singularity theory. For example, if $(R,\fm)$ has an isolated non-Cohen--Macaulay locus and is $F$-injective, then $R$ is Buchsbaum \cite{Ma15}. If we additionally assume that the non-$F$-rational locus of $R$ is isolated, then the constancy of \eqref{intro-1} can be upgraded to constancy of
\begin{align}
    e(\fq) - \ell_R(R/\fq^*)\label{intro-2}
\end{align}
where $\fq^*$ is the \emph{tight closure} of $\fq$ \cite{BMS18,Quy18}. Rings for which \eqref{intro-2} is a constant have since been referred to as \emph{tight Buchsbaum}, and it is proven in \cite{MQ22} that this notion generalizes $F$-rationality in many analogous ways to how the Buchsbaum property generalizes Cohen--Macaulayness as in \cref{intro:thm-buchsbaum}. For instance, similar to how the equality $e(\fq) = \ell_R(R/\fq)$ detects Cohen--Macaulayness, $F$-rationality may be detected by the equality $e(\fq) = \ell_R(R/\fq^*)$ (i.e. when the quantity in \eqref{intro-2} is zero; see \cite{GN01}). Note that excellent $F$-rational local rings are always Cohen--Macaulay. In summary, Ma and Quy demonstrate in \cite{MQ22} a picture for tight Buchsbaum rings which closely mirrors that of \cref{intro:thm-buchsbaum}.
\begin{thm}[{\cite[Theorem 5.1]{MQ22}}]\label{thm:ma-pham}
For an unmixed, $d$-dimensional excellent local ring  $(R,\fm,k)$ of prime characteristic $p>0$, the following are equivalent:
\begin{enumerate}[label=(\alph*)]
    \item $e(\fq)-\ell_R(R/\fq^*)$ is constant for all parameter ideals $\fq$.
    \item $\ell_R(\fq^*/\fq)$ is constant for all parameter ideals $\fq$.
    \item For every parameter ideal $\fq$, $\fm\fq^*\subseteq \fq$; that is, $\fm \subseteq \tau_{\text{par}}(R),$ the parameter test ideal.
    \item The tight closure truncation $\tau^{<d,*}\mathbf{R}\Gamma_\fm(R)$ is quasi-isomorphic to a complex of $k$-vector spaces.
\end{enumerate}  
\end{thm}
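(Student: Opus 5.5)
The plan is to organize the equivalences around the number $\ell_R(\fq^*/\fq)$ and the parameter test ideal, using tight closure theory in the excellent unmixed setting.

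\textbf{(a)$\Leftrightarrow$(b).} Write
\[
e(\fq)-\ell_R(R/\fq^*) = \bigl(e(\fq)-\ell_R(R/\fq)\bigr)+\ell_R(\fq^*/\fq).
\]
The first summand is $-I(\fq)$, the Buchsbaum-type defect. So (a) and (b) are equivalent once we show that either condition forces $e(\fq)-\ell_R(R/\fq)$ to be constant.

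\textbf{(b)$\Rightarrow$(c).} Fix a parameter ideal $\fq=(x_1,\dots,x_d)$ and compare it with $\fq'=(x_1,\dots,x_{d-1},x_d^2)$. By colon capturing and the exact sequences relating $\fq^*$ and $\fq'^*$ via multiplication by $x_d$, we get
\[
\ell(\fq'^*/\fq') \ge \ell(\fq^*/\fq) + \ell\bigl((\fq^*+\fq')/\fq'\bigr)-\ell(\cdots).
\]
Constancy then forces $x_d\fq^*\subseteq \fq$. By symmetry $x_i\fq^*\subseteq\fq$ for each $i$. Since every element of $\fm$ extends to part of some system of parameters modulo $\fq$-adjustments (a standard prime avoidance argument), we obtain $\fm\fq^*\subseteq\fq$.

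\textbf{(c)$\Rightarrow$(a),(b).} Assume $\fm\fq^*\subseteq\fq$ for all $\fq$. Then $\fq^*/\fq$ is a $k$-vector space. Identify it, via the limit/colon description $\fq^*/\fq\hookrightarrow H^d_\fm(R)$ together with colon capturing, with $0^*_{H^d_\fm(R)}$ plus contributions from the lower local cohomology. Show these lower local cohomology modules are annihilated by $\fm$ and are finite, hence $R$ is Buchsbaum-like on the tight closure truncation. Conclude that $\ell(\fq^*/\fq)$ equals a fixed sum, namely
\[
\ell\bigl(0^*_{H^d_\fm}\bigr)\ \text{(finite by the parameter test ideal being $\fm$-primary)} + \sum_{i<d}\binom{d}{i}\ell\bigl(H^i_\fm(R)\bigr),
\]
and similarly that $e(\fq)-\ell(R/\fq^*)$ is constant.

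\textbf{(c)$\Leftrightarrow$(d).} Here I would follow Schenzel's method. The truncation $\tau^{<d,*}$ replaces $H^d$ by $0^*_{H^d_\fm}$. A complex over a local ring whose cohomology is annihilated by $\fm$ is quasi-isomorphic to a complex of $k$-vector spaces exactly when the canonical Koszul/\v{C}ech maps $H^i(\ux;R)\to H^i_\fm$ are surjective on the relevant pieces for all systems of parameters. Then translate this surjectivity into $\fm\fq^*\subseteq\fq$ using the identification of $0^*_{H^d_\fm}$ with $\varinjlim \fq_t^*/\fq_t$.

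\textbf{Main obstacle.} I expect (b)$\Rightarrow$(c) to be the hardest step: extracting the annihilation by all of $\fm$ from a numerical constancy. For this I would first try Goto-style comparison of $\fq$ with $\fq'$ obtained by replacing $x_d$ with $x_d^n$, and induction on $d$ via hyperplane sections in the excellent unmixed setting.
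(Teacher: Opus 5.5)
The paper does not prove this theorem; it quotes it from \cite{MQ22}. Its Frobenius analogues (Theorems~\ref{thm:(a)<=>(b)}, \ref{thm:(b)=>(d)}, \ref{thm:(d)=>(c)}, \ref{thm:(c)=>(b)WNil}) follow the Ma--Quy arguments, and measured against them your proposal has genuine gaps.

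\textbf{The decisive gap is (b)$\Rightarrow$(c).} From comparing $\fq$ with $\fq'=(x_1,\dots,x_{d-1},x_d^2)$ you conclude $x_d\fq^*\subseteq\fq$. That holds in every ring, simply because $x_d\in\fq$. All the content therefore sits in your last step, from ``$x_i\fq^*\subseteq\fq$ for all $i$ and all $\fq$'' to $\fm\fq^*\subseteq\fq$ by prime avoidance. Since its premise is automatic, that step would make every excellent unmixed ring satisfy (c). This fails for the quartic of Example~\ref{Hyp4}, where $\tau_{\text{par}}(R)=\fm^2$. (Done with colon capturing $\fq'^*:x_d=\fq^*$, the length comparison yields only $\fq'^*\subseteq((x_1,\dots,x_{d-1}):x_d)+(x_d)$.) The missing idea is to write $\fq^*/\fq$, uniformly in $\fq$, as the kernel of $R/\fq\to M/\fq M$ for one fixed map $R\to M$ whose cokernel can be shown to be a Buchsbaum module. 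Theorem~\ref{thm:(b)=>(d)}, modeled on \cite[Theorem 5.1]{MQ22}, does this for $\fq^F$:
it fixes $e\gg0$ with $\fq^F/\fq=\ker(R/\fq\to F^e_*R/\fq F^e_*R)$ for all $\fq$;
it uses constancy to show that $F^e_*R$ and $C=F^e_*R/R$ are Buchsbaum;
it gets $\fm$-annihilation from the surjection $H_1(\ux;C)\twoheadrightarrow\fq^F/\fq$.

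\textbf{(a)$\Leftrightarrow$(b).} You never show that either condition makes $R$ Buchsbaum, and that is the whole point. The mechanism, as in Theorem~\ref{thm:(a)<=>(b)}, is this. Boundedness gives generalized Cohen--Macaulayness and $\ell_R(0^*_{H^d_\fm(R)})<\infty$. Then split along $\fq\subseteq\qlim\subseteq\fq^*$. The pieces $e(\fq)-\ell_R(R/\qlim)$ (resp.\ $\ell_R(\qlim/\fq)$) and $\ell_R(\fq^*/\qlim)$ are each bounded by an invariant of $R$, and all bounds are attained at $\fq^{[p^e]}$ for $e\gg0$. Constancy of the sum then forces constancy of each piece, hence Buchsbaum.

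\textbf{(c)$\Rightarrow$(b).} First, $\fq^*/\fq$ does not embed in $H^d_\fm(R)$, since the kernel of $R/\fq\to H^d_\fm(R)$ is $\qlim/\fq$. Buchsbaumness does follow quickly from (c), via $\qlim\subseteq\fq^*$ and Lemma~\ref{lem: buchs implies m q lim in q}. The real content is that the injection $\fq^*/\qlim\hookrightarrow 0^*_{H^d_\fm(R)}$ is onto for every $\fq$, not only for $\fq_n$ with $n\gg0$. That needs relative tight closure. Applying (c) to $(\qi,x_{i+1}^n,\dots,x_d^n)$ shows that $x_{i+1}$ kills $0^{*\rho}_{H^{d-i}_\fm(R/\qi)}$. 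The exact sequence of \cite[Lemma 4.2]{MQ22} then gives surjections $0^{*\rho}_{H^{d-i-1}_\fm(R/\qin)}\twoheadrightarrow 0^{*\rho}_{H^{d-i}_\fm(R/\qi)}$, which compose to $\fq^*/\fq\twoheadrightarrow 0^*_{H^d_\fm(R)}$ (compare Theorem~\ref{thm:(c)=>(b)WNil}). Your fallback of inducting on hyperplane sections cannot replace this, because tight closure does not commute with passing to $R/x_1R$.

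\textbf{(c)$\Leftrightarrow$(d).} Your sketch gives no mechanism here. For (d)$\Rightarrow$(c), one identifies $\fq^*/\fq$ with $h^0(K_\bullet(\ux;R)\otimes\tau^{<d,*}\R\Gamma_\fm R)$. This uses a big Cohen--Macaulay algebra $B$ with $\tau^{<d,*}\R\Gamma_\fm R\simeq\tau^{\le d}(\R\Gamma_\fm(B/R)[-1])$ (cf.\ Lemma~\ref{lem:wfn-BCM-qis} and Theorem~\ref{thm:(d)=>(c)}). For the converse, one maps the Koszul complex of the Buchsbaum module $C$ on generators of $\fm$ to the truncation, surjectively on cohomology, and invokes \cite[II, Proposition 4.3]{SV86}.
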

Recently, the tight Buchsbaum condition has been shown to restrict the \emph{tight} Hilbert coefficients, further deepening the analogy to the classical story \cite{DQV23,HLQ25} (see also \cite{MQ24}).

Our perspective in this article is to more deeply focus on how Buchsbaum conditions interact with $F$-injectivity. In a Cohen--Macaulay ring, $F$-injectivity is characterized by the equality $\fq=\fq^F$ for all parameter ideals $\fq$ \cite[Corollary 3.9]{QS17}, where $\fq^F$ denotes the \emph{Frobenius closure} of $\fq$; in analogy with \eqref{intro-1} and \eqref{intro-2}, we investigate in this article when the quantity
\begin{equation}
    e(\fq) - \ell_R(R/\fq^F)\label{intro:3}
\end{equation}
is independent of $\fq$. Our goal will be to relate this independence to a similar finiteness condition on the truncation ``up to Frobenius closure," denoted $\tau^{<d,F}\R\Gamma_\fm(R)$. However, despite the deep relationship between tight and Frobenius closure, the new picture turns out to be much more subtle for $\fq^F$ than for $\fq^*$. One complication is that the limit closure arising as $\fq^{\lim}/\fq =\ker(R/\fq \to H_\fm^d(R))$
(see also \eqref{def:lim-closure}) and appearing in \cref{intro:thm-buchsbaum} is not always comparable with the Frobenius closure $\fq^F$, in contrast to the containment $\fq^{\lim}\subseteq\fq^*$ which holds for all excellent equidimensional local rings \cite[Theorem 2.3]{Hun98}. In addition to this defect, the Frobenius closure generally does not enjoy \emph{colon capturing} in contrast to  tight closure. Hence, the naive adaptation to Frobenius closure in \eqref{intro:3} does not yield a theory closely related to Buchsbaum singularities. To handle these obstacles, we consider the ideal $\qs := \fq^F + \fq^{\lim}$ and the closely related $\qlf$ which are both better suited for analyzing Buchsbaum singularities (and that latter of which \emph{does} satisfy colon capturing; see \cref{thm: colon capturing for qlf}). A simplified version of our main theorem is summarized as follows. \vspace{-.1in}

\textbf{\begin{Theoremx}[\cref{thm:(a)<=>(b),thm:mainthm-restated}]\label{thm:mainthm}
For an excellent $d$-dimensional unmixed local ring $(R,\fm,k)$ of prime characteristic $p>0$, consider the following conditions:
\begin{enumerate}[label=(\alph*)]
\item\label{intro:a} The quantity $e(\mfq)-\ell_R(R/\qs)$ is a constant independent of parameter ideal $\fq\subseteq R$.
\item\label{intro:b}The quantity $\ell_R(\qs/\fq)$ is independent of parameter ideal $\fq\subseteq R$.
\item\label{intro:c} $\fm (\qs) \subseteq \fq$ for all $\fq$ generated by system of parameters. 
\item\label{intro:d} the $F$-truncation $\tau^{<d,F}\mathbf{R} \Gamma_\fm(R)$ is quasi-isomorphic to a complex of $k$-vector spaces.
\end{enumerate}
Then \ref{intro:a} and \ref{intro:b} are equivalent. Moreover, if $R$ is $F$-finite and weakly $F$-nilpotent, then all four conditions are equivalent.
\end{Theoremx}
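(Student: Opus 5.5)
The plan splits into two parts: the unconditional equivalence \ref{intro:a}$\Leftrightarrow$\ref{intro:b}, and the chain through \ref{intro:c} and \ref{intro:d} under the $F$-finite, weakly $F$-nilpotent hypotheses.

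\textbf{The equivalence \ref{intro:a}$\Leftrightarrow$\ref{intro:b}.} Start from the identity
\[
e(\fq)-\ell_R(R/\qs) = \bigl(e(\fq)-\ell_R(R/\fq)\bigr)+\ell_R(\qs/\fq).
\]
It therefore suffices to show that constancy of either side forces $e(\fq)-\ell_R(R/\fq)$ to be constant. I would not try to prove Buchsbaumness directly. Instead I would use $\fq^{\lim}\subseteq \qs$ together with the standard formula $e(\fq)=\ell_R(R/\fq^{\lim})$, which holds for unmixed excellent rings, at least after passing to $\fq$ with sufficiently high Frobenius powers, or via the $\hat R$ and $S_2$-ification argument of Ma--Quy. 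This gives
\[
e(\fq)-\ell_R(R/\qs)=\ell_R(\qs/\fq^{\lim}).
\]
The cleanest route is then to mirror the proof of \cref{thm:ma-pham}(a)$\Leftrightarrow$(b), replacing $\fq^*$ by $\qs$. Both conditions are tested on the family of powers $\fq_n=(x_1^n,\dots,x_d^n)$, with a Goto--Ma--Quy type argument giving
\[
\ell_R(\qs/\fq)\le \ell_R(\fq^{\lim}/\fq)+\ell_R(\qs/\fq^{\lim}).
\]
The deviation $\ell(R/\fq)-e(\fq)=\ell_R(\fq^{\lim}/\fq)$ is bounded and stabilizes along $\fq_n$. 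So constancy of one quantity, compared along $\fq$ versus $\fq_n$ and across different parameter ideals sharing $\fq_n$, forces constancy of the other.

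\textbf{The chain \ref{intro:b}$\Rightarrow$\ref{intro:c}$\Rightarrow$\ref{intro:d}$\Rightarrow$\ref{intro:b}.} Assume now that $R$ is $F$-finite and weakly $F$-nilpotent.

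\begin{itemize}
\item For \ref{intro:b}$\Rightarrow$\ref{intro:c}, take $y\in\fm$ and a parameter ideal $\fq$. I would compare $\fq$ with $\fq'=(x_1,\dots,x_{d-1},x_dy')$ for suitable parameters, in the usual Buchsbaum style. Constancy of lengths, together with the inclusion $\fq'\subseteq\fq$, gives equal colengths of closures. Colon capturing for $\qlf$ (\cref{thm: colon capturing for qlf}) then lets one relate $(\fq')^{F\text{-lim}}$ to $x_d\cdot(\dots)$ and conclude that $\fm\,\qs\subseteq \fq$.
\item For \ref{intro:c}$\Rightarrow$\ref{intro:d}, I would use the description of the $F$-truncation. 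Its cohomology in degrees $<d$ is $H^i_\fm(R)$, and the top piece is the Frobenius-nilpotent part $0^F_{H^d_\fm(R)}$. By weak $F$-nilpotence, the lower local cohomologies are nilpotent, so their elements are killed into Frobenius closures of parameter ideals via \v{C}ech/Koszul limits. Condition \ref{intro:c} gives that $\fm$ annihilates each of these modules. Schenzel's criterion requires more: that the truncation \emph{splits} as a complex of $k$-vector spaces. I would prove this by an induction on $d$ modeled on Schenzel's proof, using that $\fm$ kills $\qs/\fq$ for every system of parameters, so in particular for all $x_1^{n},\dots$. This shows the Koszul-to-local-cohomology maps are surjective on socle-like pieces, which yields the required splitting.
\item For \ref{intro:d}$\Rightarrow$\ref{intro:b}, compute $\ell_R(\qs/\fq)$ as a sum, with binomial coefficients, of $\dim_k H^i$ of the truncation, in the manner of the Buchsbaum length formula $\sum\binom{d-1}{i}\ell(H^i_\fm)$. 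This exhibits it as independent of $\fq$.
\end{itemize}

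\textbf{Expected obstacle.} The main difficulty is the splitting step in \ref{intro:c}$\Rightarrow$\ref{intro:d}. Annihilation by $\fm$ of the cohomology does not by itself give quasi-isomorphism to a complex of vector spaces, and in this setting the Frobenius closure lacks the good colon properties needed to run Schenzel's inductive argument. I would try first to reduce to $\qlf$, where colon capturing holds, showing that $\qs$ and $\qlf$ agree under weak $F$-nilpotence, and only then run the induction on $d$.
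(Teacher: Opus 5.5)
\textbf{Equivalence of \ref{intro:a} and \ref{intro:b}.} This part is built on a false ``standard formula''. The equality $e(\fq)=\ell_R(R/\qlim)$ fails outside the Cohen--Macaulay case: for unmixed $R$ it forces Cohen--Macaulayness (the Cuong--Nhan result used in \cref{prp:CM}). For generalized Cohen--Macaulay $R$ one only has $0\le e(\fq)-\ell_R(R/\qlim)\le C(R)=\sum_i\binom{d-1}{i-1}\ell_R(H^i_\fm(R))$, with equality exactly for standard parameter ideals. So passing to $\fq^{[p^e]}$ makes this defect maximal, not zero. For example, in $k\llbracket x,y,u,v\rrbracket/((x,y)\cap(u,v))$ with $\fq=(x+u,y+v)$ one has $e(\fq)=2$ while $\qlim=\fm$. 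Likewise $\ell_R(R/\fq)-e(\fq)\neq\ell_R(\qlim/\fq)$ in general.

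The term you discard is precisely the one carrying Buchsbaumness. With your identity, \ref{intro:a} would only say that $\ell_R(\qs/\qlim)$ is constant, which gives no control over $\ell_R(R/\fq)-e(\fq)$. Constancy of the latter \emph{is} the Buchsbaum property, so it cannot be sidestepped. What is needed is the decomposition $e(\fq)-\ell_R(R/\qs)=\bigl(e(\fq)-\ell_R(R/\qlim)\bigr)+\ell_R(\qs/\qlim)$. Boundedness gives generalized Cohen--Macaulayness and $\ell_R(\zdF)<\infty$ (\cref{thm: CMFI-pun}). The two summands are then bounded above by $C(R)$ and $\ell_R(\zdF)$ (\cref{lem: computing 0^F via qs}), and both bounds are attained \emph{simultaneously} at $\fq^{[p^e]}$ for $e\gg0$. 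A constant sum therefore forces each summand to be constant, and constancy of $e(\fq)-\ell_R(R/\qlim)$ gives Buchsbaumness. The converse is identical using $\ell_R(\qlim/\fq)\le\sum_{i<d}\binom{d}{i}\ell_R(H^i_\fm(R))$. Your inequality $\ell_R(\qs/\fq)\le\ell_R(\qlim/\fq)+\ell_R(\qs/\qlim)$ is a tautological equality and supplies none of this.

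\textbf{The cycle through \ref{intro:c} and \ref{intro:d}.} Your two hard steps have no mechanism.

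For \ref{intro:b}$\Rightarrow$\ref{intro:c}, your sketch never gets past the two layers $\qlim/\fq$ and $\qs/\qlim\cong\zdF$. Knowing that $\fm$ kills each of them (which does follow from \ref{intro:b}) gives only $\fm^2\qs\subseteq\fq$. The actual content of \ref{intro:c} is $\fm\fq^F\subseteq\fq$, i.e.\ that $\fm$ kills the extension. You also never use $F$-finiteness, and this is exactly where the paper needs it. For a uniform $e\gg\hsl(R)$ one has $\fq^F/\fq=\ker(R/\fq\to F^e_*R/\fq F^e_*R)$ for all $\fq$. Constancy of $\ell_R(\fq^F/\fq)$ (\cref{lem: qs/q constant length implies qF/q constant length}) makes $C=F^e_*R/R$ a finitely generated Buchsbaum module. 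Then $\fq^F/\fq$ is a quotient of $H_1(\ux;C)$, hence is killed by $\fm$.

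The same $C$ produces \ref{intro:d}. A Schenzel-type splitting needs a map from a complex of $k$-vector spaces that is surjective on \emph{all} cohomology of $\tau^{<d,F}\R\Gamma_\fm R$, including onto $\zdF$ in degree $d$. The paper obtains it as $K^\bullet(\fm,C)[-1]\to\R\Gamma_\fm C[-1]\to\tau^{<d,F}\R\Gamma_\fm R$ (\cref{thm:(b)=>(d)}). It never proves \ref{intro:c}$\Rightarrow$\ref{intro:d} directly. Instead it closes the loop with \ref{intro:d}$\Rightarrow$\ref{intro:c} and \ref{intro:c}$\Rightarrow$\ref{intro:b}, the latter by showing $\fq^F/\fq\to\zdF$ is onto via relative Frobenius closures along $\fq_{\le i}$. ``Surjective on socle-like pieces'' is not such a map, and replacing $\qs$ by $\qlf$ does not touch the derived splitting problem.

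Your \ref{intro:d}$\Rightarrow$\ref{intro:b} can be made to work, but only with a bridge you omit. Under weak $F$-nilpotence $\rperf$ is big Cohen--Macaulay, so $h^0\bigl(K_\bullet(\ux;R)\otimes\tau^{<d,F}\R\Gamma_\fm R\bigr)\cong\fq^F/\fq$ (\cref{lem:wfn-BCM-qis} and the proof of \cref{thm:(d)=>(c)}). If the truncation is $\bigoplus_iV_i[-i]$ with $k$-vector spaces $V_i$, the Koszul differentials vanish. This gives $\ell_R(\fq^F/\fq)=\sum_{i<d}\binom{d}{i}\ell_R(H^i_\fm(R))+\ell_R(\zdF)$; note the binomials are $\binom{d}{i}$, not $\binom{d-1}{i}$. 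Finally $\qs=\fq^F$ by \cref{prop:wfn-q-lim-F}, since $R$ is Buchsbaum by Schenzel's criterion applied to $\tau^{<d}$.
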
}
We call local rings satisfying the equivalent conditions \ref{intro:a} or \ref{intro:b} of \cref{thm:mainthm} \emph{$F$-Buchsbaum}. We prove that an $F$-Buchsbaum ring is Buchsbaum (\Cref{thm:(a)<=>(b)}), and further satisfies constancy $e(\fq) - \ell_R(R/\fq^F)$ (\Cref{lem: qs/q constant length implies qF/q constant length}). Furthermore, when $R$ is Buchsbaum and weakly $F$-nilpotent, the constancy of $e(\fq) - \ell_R(R/\fq^F)$ implies that $R$ is $F$-Buchsbaum by \Cref{prop:wfn-q-lim-F}.

\begin{figure}[htp!]
    \centering
    \begin{equation*}
    \begin{tikzcd}[column sep=huge]
        \text{$F$-rational}\arrow[r,Rightarrow]\arrow[d,Rightarrow] & \text{tight Buchsbaum} \arrow[r,Rightarrow,"\text{\cite[Theorem 1.1]{GN02}}"] \arrow[d,swap,Rightarrow,"\text{\Cref{prp: tbb implies fbb}}" {xshift=-2px}] & \text{$F$-rational on }\Spec^\circ(R)\arrow[d,Rightarrow]\arrow[l,bend right,sloped,Rightarrow,"+F\text{-injective}"]\arrow[l,bend right,sloped,Rightarrow,"\text{\cite{BMS18, Quy18}}" {yshift=-17px}]\\
        \text{CMFI}\arrow[r,Rightarrow]\arrow[d,Rightarrow] & \text{$F$-Buchsbaum}\arrow[r,bend left=10, Rightarrow,"\text{\cref{thm: CMFI-pun}}" {yshift=2px}]\arrow[d,Rightarrow,swap,"\text{\Cref{thm:(a)<=>(b)}}" {xshift=-2px}]& \text{CMFI on }\Spec^\circ(R)\arrow[d,Rightarrow] \arrow[l,bend left=10,sloped,Rightarrow,"+F\text{-injective}"]\arrow[l,bend left=10,sloped,Rightarrow,"\text{\cref{rmk: f-inj  + buchs = f-buchs}}" {yshift=-12px}] \\
        \text{Cohen--Macaulay}\arrow[r,Rightarrow]&\text{Buchsbaum}\arrow[r,Rightarrow]&\text{Cohen--Macaulay on } \Spec^\circ(R) \arrow[l,bend left,sloped,Rightarrow,"+F\text{-injective}"',"\text{\cite{Ma15}}"]
    \end{tikzcd}
\end{equation*}
\caption{Implications between Buchsbaum-type conditions for excellent, unmixed local rings of prime characteristic}
    \label{diagram:implications}
\end{figure}
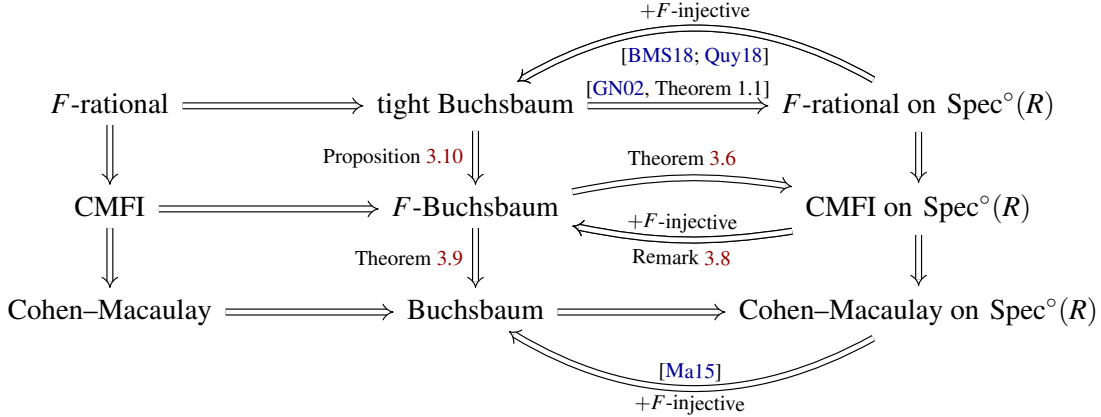

The relationships between the notions described in the introduction are summarized in \cref{diagram:implications}. We also find a simplification (\cref{cor:gor}) which aids in checking whether a Cohen--Macaulay ring is $F$-Buchsbaum or tight Buchsbaum. We apply this criterion to several examples which demonstrate that the implications in \cref{diagram:implications} are generally not reversible.

Finally, in the course of proving \cref{thm:mainthm}, we find an analogue of a theorem by Goto--Nakamura in terms of $\qs$ for local rings with isolated non-CMFI loci.

\begin{Theoremx}[\Cref{thm: CMFI-pun}]\label{thm:mainthm-B}
    Let $(R,\fm)$ be a $d$-dimensional excellent equidimensional local ring of prime characteristic $p>0$. The following are equivalent.
    \begin{enumerate}[label=(\alph*)]
        \item For all $\fp\in\Spec(R)\smallsetminus\{\fm\}$, the localization $R_\fp$ is Cohen--Macaulay and $F$-injective.
        \item $R$ is generalized Cohen--Macaulay and $\ell_R\left( 0^F_{H^d_\fm(R)}\right)<\infty$.
        \item $\sup\{\ell_R( \qs /\fq)\mid \fq \text{ is a parameter ideal}\}<\infty$.
    \end{enumerate}
\end{Theoremx}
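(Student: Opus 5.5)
Since this statement comes from the introduction, I only use standard facts here. These are: the Goto--Nakamura analogue for $\fq^*$; the description $\fq^{\lim}/\fq=\ker(R/\fq\to H^d_\fm(R))$; the characterization of generalized Cohen--Macaulay rings as those where $\sup_\fq \ell_R(\fq^{\lim}/\fq)<\infty$, equivalently $\ell_R(H^i_\fm(R))<\infty$ for $i<d$; and the interpretation of $\fq^F/\fq$ through Frobenius acting on $H^d_\fm(R)$ via the Čech class $[\,\cdot\,/x_1\cdots x_d]$. I will prove (a)$\Leftrightarrow$(b) and (b)$\Leftrightarrow$(c) separately. The bridge is the short exact sequence
\[
0\to \fq^{\lim}/\fq \to \qs/\fq \to \qs/\fq^{\lim}\to 0 .
\]
Under the map $R/\fq\to H^d_\fm(R)$, the image of $\qs/\fq^{\lim}$ is the set of classes $\eta=[r/x_1\cdots x_d]$ with $F^e(\eta)=0$ for some $e$ (for $r\in\fq^F$ we get $r^{p^e}\in\fq^{[p^e]}$). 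So $\qs/\fq^{\lim}$ embeds into $\zdF\cap\im(R/\fq\to H^d_\fm(R))$, and in fact equals the preimage of $\zdF$ intersected with that image, modulo $\fq^{\lim}$.

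For (a)$\Rightarrow$(b): at a prime $\fp\neq\fm$, $R_\fp$ is CM, so parameter ideals of $R_\fp$ are unmixed, and $F$-injectivity gives $\fq R_\fp=(\fq R_\fp)^F$ by \cite[Corollary 3.9]{QS17}. I would use $H^d_\fm(R)$ in place of local cohomology at $\fp$ via the standard Matlis-dual/finiteness arguments: CM on the punctured spectrum gives that $H^i_\fm(R)$ has finite length for $i<d$. For the $F$-nilpotent part I would take the Matlis dual of $\zdF$, which is the cokernel of the map $\omega_R\to F^e_*\omega_R$ dual to $F$ (using excellence and a canonical module after completion). I then localize at $\fp$: $F$-injectivity of $R_\fp$ makes $\omega_{R_\fp}\to F_*\omega_{R_\fp}$ surjective onto the relevant summand, in the sense that the dual map is injective. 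Hence the cokernel is supported at $\fm$. Since $\zdF$ is a union of the kernels of $F^e$, an increasing chain, I need a stabilization result; for this I would invoke Hartshorne--Speiser--Lyubeznik, which gives a uniform $e$ with $F^e(\zdF)=0$. With that, $\zdF$ is the Matlis dual of a finitely generated module supported at $\fm$, hence of finite length.

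For (b)$\Rightarrow$(c): finiteness of the lower local cohomology gives $\sup_\fq\ell(\fq^{\lim}/\fq)<\infty$, and the embedding above gives $\ell(\qs/\fq^{\lim})\le\ell(\zdF)$. The exact sequence then bounds $\ell(\qs/\fq)$ uniformly.

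For (c)$\Rightarrow$(b): since $\fq^{\lim}\subseteq\qs$, the bound in (c) bounds $\ell(\fq^{\lim}/\fq)$, so $R$ is generalized CM. For $\zdF$, I take $\eta\in\zdF$. Because $H^d_\fm(R)=\varinjlim R/\fq_t$ with $\fq_t=(x_1^t,\dots,x_d^t)$, we can write $\eta=[r/(x_1\cdots x_d)^t]$, and $r\in\fq_t^{\lim}+\fq_t^F=\fq_t^{F\textrm{-lim}}$. So every finitely generated submodule of $\zdF$ is an image of some $\fq_t^{F\textrm{-lim}}/\fq_t$. The lengths $\ell\bigl(\zdF\cap\im(R/\fq_t)\bigr)\le\ell(\fq_t^{F\textrm{-lim}}/\fq_t)$ are therefore bounded independently of $t$. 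As these images exhaust $\zdF$, an increasing union of submodules of bounded length, $\zdF$ has finite length.

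For (b)$\Rightarrow$(a): generalized CM gives CM on the punctured spectrum. For $F$-injectivity at $\fp$, dualize $\ell(\zdF)<\infty$: the cokernel of $F_*\omega\to\omega$ (the trace) has finite length after the HSL stabilization. Localizing at $\fp$, the trace $F_*\omega_{R_\fp}\to\omega_{R_\fp}$ is surjective, and for CM $R_\fp$ this is equivalent to $F$-injectivity.

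The main obstacle is the precise dictionary between $\zdF$ and the dual of the Frobenius trace on $\omega$ under localization. Local cohomology at $\fm$ does not localize directly, so I would pass to the completion and use local duality on $\widehat R$. I would also need to check that excellence and equidimensionality guarantee that $\widehat R$ is equidimensional and that the punctured-spectrum conditions descend and ascend along $R\to\widehat R$.
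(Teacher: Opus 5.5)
The genuine gap is in how you get back to (a). Both your (a)$\Rightarrow$(b) and your (b)$\Rightarrow$(a) identify the Matlis dual of $\ker\bigl(H^d_\fm(R)\to H^d_\fm(F^e_*R)\bigr)$ with $\operatorname{coker}(F^e_*\omega_R\to\omega_R)$, and then localize this cokernel at $\fp$. That dictionary needs $F^e_*R$ to be a finite $R$-module, i.e. $R$ must be $F$-finite. Only then is the dual of $H^d_\fm(F^e_*R)$ equal to $F^e_*\omega$, does the trace exist, and does everything commute with localization.

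$F$-finiteness is not assumed, and passing to $\widehat R$ does not fix this: a complete local ring is $F$-finite only when $[k:k^p]<\infty$. (Also, the dual of Frobenius is the trace $F_*\omega\to\omega$; in (a)$\Rightarrow$(b) you have the arrow backwards.) Since your scheme reaches (a) only through this duality, the equivalence is not proved in the stated generality.

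The paper instead proves (c)$\Rightarrow$(a) ideal-theoretically, with no duality:
\begin{enumerate}
\item If $N\geq \ell_R(\qs/\fq)\geq\ell_R(\fq^F/\fq)$ for all $\fq$, then $\fm^N\fq^F\subseteq\fq$ for every parameter ideal $\fq$.
\item For $\fp\neq\fm$ with $\dim R_\fp=d-1$, pick a system of parameters with $\fb=(x_1,\dots,x_{d-1})\subseteq\fp$. Apply the previous step to $\fb+(x_d^n)$ for all $n$ and use Krull's intersection theorem to get $\fm^N\fb^F\subseteq\fb$.
\item Since Frobenius closure commutes with localization, $(\fb R_\fp)^F=\fb R_\fp$. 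As $R$ is generalized Cohen--Macaulay, $R_\fp$ is Cohen--Macaulay, so it is $F$-injective by \cite[Corollary 3.9]{QS17}.
\item Primes of smaller dimension follow by localizing further.
\end{enumerate}
Your (b)$\Rightarrow$(c) is the same as the paper's, so adding this argument closes the loop.

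For (a)$\Rightarrow$(b) the paper simply cites \cite[Lemma 3.1]{HQ23}. Your trace-plus-HSL argument proves it when $R$ is $F$-finite. For a general excellent ring you would need an extra reduction, for instance a $\Gamma$-construction that controls both $F$-injectivity on the punctured spectrum and $\zdF$.

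Your direct proof of (c)$\Rightarrow$(b) is a genuinely different and workable route; the paper goes (c)$\Rightarrow$(a)$\Rightarrow$(b). Two steps need repair.
\begin{itemize}
\item A representative $\eta=[r+\fq_t]\in\zdF$ need not have $r\in\fq_t^{F\textrm{-lim}}$. From $F^e(\eta)=0$ you only get $x^mr^{p^e}\in\fq_{tp^e+m}$, where $x=x_1\cdots x_d$. You must replace $(r,t)$ by $(x^sr,t+s)$ with $sp^e\geq m$; then $x^sr\in\fq_{t+s}^F$.
\item Your bound on $\ell_R(\zdF\cap\im(R/\fq_t))$ is unjustified. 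That intersection is the image of $\{r : r^{p^e}\in(\fq_t^{[p^e]})^{\lim}\text{ for some } e\}$, which may be larger than $\fq_t^{F\textrm{-lim}}$. Instead, use that the images of $\fq_t^{F\textrm{-lim}}/\fq_t$ in $H^d_\fm(R)$ form an increasing chain, because $x\fq_t^{F\textrm{-lim}}\subseteq\fq_{t+1}^{F\textrm{-lim}}$. These are submodules of length at most $N$, and by the first repair their union is $\zdF$.
\end{itemize}
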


The article is organized as follows. \cref{sec:preliminaries} describes the truncation operations, as well as their counterparts for tight and Frobenius closures. We prove Theorems \ref{thm:mainthm-B} and \ref{thm:mainthm} in \cref{sec:CMFI,sec:main} respectively, and we conclude with several examples in \cref{sec:examples}.

\subsection*{Acknowledgments} We are grateful to Alessandra Costantini and Neil Epstein for early contributions to this article. We also thank Linquan Ma for several insightful conversations and Kazuma Shimomoto for comments on an earlier draft of this article which led to significant improvements.

\section{Preliminaries}\label{sec:preliminaries}

By a triple $(R,\fm,k)$ we mean a Noetherian local ring $R$ with unique maximal ideal $\fm$ and residue field $k = R/\fm$. In such a situation, set $d:=\dim(R)$. Most situations we consider in this article assume that $R$ has prime characteristic $p>0$. For any fixed system of parameters $x_1,\ldots,x_d$ of $R$ with $\fq = (x_1,\ldots,x_d)$,  we denote $\fq_n = (x_1^n,\ldots,x_d^n)$ and $\qi = (x_1,\ldots,x_i)$ for $i < d$.  We denote by $R^\circ := R \smallsetminus \bigcup_{\mfp \in \min(R)} \mfp$ the complement of the union of the minimal primes of $R$, and we denote by $\Spec^\circ(R):=\Spec(R)\smallsetminus\{\fm\}$ the \emph{punctured spectrum} of $R$. We denote the length of an $R$-module by $\ell_R(-)$.

\subsection{Closure operations in prime characteristic}\label{subsec: closures}

Throughout this subsection, $(R,\fm)$ will be a local ring of prime characteristic $p>0$. We will write $F^e:R \rightarrow R$ for the $e$\ts{th}-iterate of the Frobenius endomorphism and $F_*^e(R)$ for the $R$-module obtained by restricting scalars along $F^e$.

We now recall the notions of tight and Frobenius closure, following \cite[Section 8]{HH90}. Let $M$ be an $R$-module and let $N\subseteq M$ be a submodule. The \emph{Frobenius closure of $N$ in $M$}, denoted $N^F_M,$ is the $R$-submodule of elements $\eta\in M$ mapping to zero under
\begin{align*}
    M \rightarrow M/N \stackrel{\id \otimes F^e}{\rightarrow} M/N\otimes_R F^e_*(R)
\end{align*} for some $e \in \mathbb{N}$. 
The \emph{tight closure of $N$ in $M$}, denoted by $N^*_M,$ is the $R$-submodule of elements in the kernel of the composition 
\begin{align*}
M \rightarrow M/N \rightarrow M/N\otimes_R F^e_*(R) \stackrel{\id \otimes \cdot F^e c}{\rightarrow} M/N\otimes_R F^e_*(R)
\end{align*}
    for some $c \in R^\circ$ and for all $e \gg 0$. We will primarily study these notions in the case of the zero submodule of the local cohomology modules $H^i_\fm(R)$, i.e., the tight closure $0^*_{H^i_\fm(R)}$ and Frobenius closure $0^F_{H^i_\fm(R)}$ of zero. For the case of ideals $I\subseteq R$ on the other hand, the tight and Frobenius closures are more explicit: 
\begin{align*}
I^F &= \{ x \in R \mid x^{p^e} \in I^{\fbp{p^e}}\text{ for all }e\gg 0\}, \\
I^* &= \{ x \in R \mid \text{there is a } c \in R^\circ \text{ such that } cx^{p^e} \in I^{\fbp{p^e}} \text{ for all } e \gg 0\} 
\end{align*}
where $I^{[p^e]}$ denotes the ideal $F^e(I)R$.

We will also need to work with a relative notion of these closure operations which we now review (cf. \cite[Section ~4]{PQ19}). If $I\subseteq R$ is an ideal, we can factor the map $F^e:R/I\rightarrow R/I$ as
\begin{equation}
        \begin{tikzcd}
            R/I \arrow{rr}{F^e}\arrow[swap]{dr}{\rho_e} & & R/I \\
            \, & R/I^{\fbp{p^e}} \arrow[swap]{ur}{\pi_e} &\,
        \end{tikzcd}\label{eq:relative-frob}
\end{equation}
where $\rho_e(x+I) = x^{p^e} +I^{\fbp{p^e}}$ and $\pi_e$ is the natural projection map. The $p^e$-linear map $\rho_e$ is called the \emph{relative Frobenius action} on $R/I$. These maps then induce maps (which we also call $\rho_e$) on the local cohomology modules
\begin{align*}
\rho_e: H^i_\fm(R/I)\rightarrow H^i_\fm(R/I^{\fbp{p^e}})
\end{align*}
and we call $$0^\rho_{H^i_\fm(R/I)}:=\bigcup_e \ker(\rho_e: H^i_\fm(R/I)\rightarrow H^i_\fm(R/I^{\fbp{p^e}}))$$ the \emph{relative Frobenius closure of zero in $H^i_\fm(R/I)$}. We also have the \emph{relative tight closure of zero in $H^i_\fm(R/I)$}, defined by 
\begin{align*}
    0^{*\rho}_{H^i_\fm(R/I)} := \left\lbrace\left. \xi \in H^i_\fm(R/I) \,\right|\, \exists c \in R^\circ \text{ such that } c\rho_e(\xi) = 0 \text{ }\forall e \gg 0\,\right\rbrace.
\end{align*}

Recall that $R$ is {\it weakly $F$-nilpotent} provided $H_\fm^i(R)$ is nilpotent, i.e., $0_{H_\fm^i(R)}^{F} = H_\fm^i(R)$ for all $i < \dim (R)$. A weakly $F$-nilpotent ring for which $0_{H_\fm^d(R)}^{F} = 0_{H_\fm^d(R)}^{*}$ is called {\it $F$-nilpotent}. We call $R/I$ {\it relatively weakly $F$-nilpotent} provided  $0_{H_\fm^i(R/I)}^{\rho} = H_\fm^i(R/I)$ for all $i < \dim (R/I)$. Weak $F$-nilpotence can also be characterized in terms of the Cohen--Macaulayness of $\rperf$, which we will make frequent use of in the course of proving the derived category implications in \cref{thm:mainthm}.

\begin{thm}[{\cite[Proposition 12.22]{MP25}}]\label{thm:wfn-rperf}
    A $d$-dimensional local ring $(R,\fm,k)$ of prime characteristic $p>0$ is weakly $F$-nilpotent if and only if $\rperf$ is a balanced big Cohen--Macaulay $R$-algebra. In particular, if $R$ is weakly $F$-nilpotent, then $H^i_\fm(\rperf)=0$ for all $i<d$.
\end{thm}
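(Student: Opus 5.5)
The plan is to reduce both directions to the behavior of Frobenius on local cohomology of $\rperf$, using that $\rperf = \varinjlim_e F^e_*R$ (direct limit along the Frobenius maps $R \to F_*R \to F^2_*R\to\cdots$). Since local cohomology commutes with direct limits, we get
\begin{equation*}
H^i_\fm(\rperf) \cong \varinjlim\left( H^i_\fm(R) \xra{F} H^i_\fm(R) \xra{F} \cdots \right),
\end{equation*}
where the transition maps are the natural Frobenius actions on $H^i_\fm(R)$. (The $R$-module structure on $F^e_*$ is twisted, but this does not affect vanishing.) Consequently $H^i_\fm(\rperf)=0$ if and only if every element of $H^i_\fm(R)$ is killed by some Frobenius iterate, that is, $0^F_{H^i_\fm(R)} = H^i_\fm(R)$. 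This gives directly the ``in particular'' statement: weak $F$-nilpotence is exactly $H^i_\fm(\rperf)=0$ for all $i<d$.

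For the main equivalence, recall that an $R$-algebra $B$ is balanced big Cohen--Macaulay when every system of parameters of $R$ is a regular sequence on $B$ and $\fm B\neq B$. First, $\fm\rperf\ne\rperf$, since $\rperf$ is local with maximal ideal $\sqrt{\fm\rperf}$. It is standard (e.g.\ via the Čech complex and Koszul homology) that a module $B$ with $\fm B\ne B$ is balanced big CM precisely when $H^i_\fm(B)=0$ for all $i<d$. The substantive direction here is that local cohomology vanishing implies every s.o.p.\ is $B$-regular; this is done by induction on $d$, using that $H^i_{\fm}(B/x_1B)$ is controlled by the long exact sequence for $0\to B\xra{x_1}B\to B/x_1B\to 0$, once $x_1$ is shown to be a nonzerodivisor.

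I expect this induction step to be the main obstacle. For arbitrary modules, vanishing of local cohomology below $d$ does not imply depth in this sense without finiteness. The fix is to use the specific structure of $\rperf$. If $x_1 b=0$ in $\rperf$, then the element $b$ generates a submodule supported where $x_1$ is nilpotent, which leads to a class in $H^0$ of some localization; I would handle this by applying the same vanishing to $R_\fp$ for primes $\fp$ via the Frobenius-limit description.

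A cleaner route is to cite the known fact (Bhatt, or the cited reference \cite{MP25}) that for $\rperf$, the Čech-to-regularity implication holds because $\rperf$ is a filtered colimit of finite $R$-modules $F^e_*R$. Concretely, Koszul homology $H_j(\underline{x};\rperf)=\varinjlim H_j(\underline{x}^{p^e};R)$. One then shows that the vanishing of $\varinjlim$ of the lower local cohomology forces the vanishing of these colimits, using the standard comparison that Koszul homology on powers of the s.o.p.\ is killed, in the limit, exactly by the Frobenius nilpotence of $H^{<d}_\fm(R)$.

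Conversely, if $\rperf$ is balanced big CM, then the Čech complex on an s.o.p.\ is exact below degree $d$, so $H^i_\fm(\rperf)=0$ for $i<d$. By the colimit computation above, $R$ is then weakly $F$-nilpotent.
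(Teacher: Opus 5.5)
The paper gives no proof of this statement; it quotes \cite[Proposition 12.22]{MP25}, so your argument has to stand on its own. Your computation $H^i_\fm(\rperf)\cong\varinjlim_F H^i_\fm(R)$ is correct. It shows that $R$ is weakly $F$-nilpotent if and only if $H^i_\fm(\rperf)=0$ for all $i<d$. That proves the ``in particular'' clause. Combined with the standard fact that balanced big Cohen--Macaulay modules have vanishing lower local cohomology, it also proves that ``$\rperf$ balanced big CM'' implies weak $F$-nilpotence. (This vanishing, together with the Koszul consequence below, is in fact all the paper later uses.)

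The gap is the forward implication. You assert that, for arbitrary $B$ with $\fm B\neq B$, vanishing of $H^{<d}_\fm(B)$ is equivalent to $B$ being balanced big CM. This is false, as you concede a paragraph later. Take $R=k\llbracket x,y\rrbracket$ and the $R$-algebra $B=R\times k((y))$, with $x\mapsto 0$ and $y\mapsto y$ in the second factor. Since $y$ acts invertibly on $k((y))$, we get $H^i_\fm(B)=H^i_\fm(R)=0$ for $i<2$ and $B/\fm B\cong k$. Yet $x$ kills $(0,1)$, so the s.o.p.\ $x,y$ is not $B$-regular.

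Neither of your fallbacks repairs this. Since $K_\bullet(\ux;B)\simeq K_\bullet(\ux;R)\otimes\R\Gamma_\fm B$ for every $B$, vanishing of $H^{<d}_\fm(B)$ automatically gives $H_j(\ux;B)=0$ for $j>0$ on every \emph{full} s.o.p. This holds in the example too. For non-finitely generated modules it is strictly weaker than regularity: the order matters ($y,x$ is $B$-regular above), and Nakayama is unavailable. Your localization idea presupposes that weak $F$-nilpotence passes from $R$ to $R_\fp$, which the Frobenius-limit description does not provide.

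What regularity actually requires is $H_1(x_1,\dots,x_k;\rperf)=0$ for every \emph{partial} sequence. This unwinds to colon-capturing by Frobenius closure: $(x_1,\dots,x_k):_R x_{k+1}\subseteq (x_1,\dots,x_k)^F$ for every s.o.p.\ and every $0\le k<d$. Equivalently, Frobenius must act nilpotently on $H^{i}_{(x_1,\dots,x_k)}(R)$ for all $i<k\le d$. Only the case $k=d$ is your hypothesis. For $k<d$ the ideal is not $\fm$-primary, and transferring nilpotence from $H^{<d}_\fm(R)$ is the real content of the proposition.

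Even $k=0$ is not formal. If $R$ had a minimal prime $\fp$ with $\dim R/\fp<d$, prime avoidance would give a parameter $x_1\in\fp$, and $x_1$ would be a zero-divisor on the reduced ring $\rperf$. So you must first show that weak $F$-nilpotence forces every minimal prime to have dimension $d$.

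The general case needs a genuine argument. One option is an induction on $k$ through the relative Frobenius actions on $H^j_\fm(R/(x_1,\dots,x_k))$, in the spirit of \cite{PQ19}. Another is to prove that weak $F$-nilpotence localizes and then induct on dimension. That requires an $F$-depth analogue of $\operatorname{depth}R\le\operatorname{depth}R_\fp+\dim R/\fp$, which needs $R$ equidimensional and catenary, e.g.\ after completing. As it stands, your proposal proves the ``in particular'' statement and the converse, but not the main equivalence.
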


\subsection{Local cohomology and truncations}\label{sec:truncations} In this subsection, continue to let $(R,\fm,k)$ be a $d$-dimensional local ring of prime characteristic $p>0$ and let $D(R)$ denote the derived category of $R$-modules. We denote by $\qis$ quasi-isomorphisms in $D(R)$. For $C^{\sbt}$ a cochain complex representing a class in $D(R)$, denote by $h^i(C^{\sbt})$ its $i$\ts{th}-cohomology. Set $\R\Gamma_\fm(R)$ to be the complex in $D(R)$ with $h^i(\R\Gamma_\fm(R)) \cong H_{\fm}^i(R)$ for all $i$. Denote also, for each $i$, the natural truncations $\tau^{< i} \R\Gamma_{\fm}R$ or $\tau^{\leq i} \R\Gamma_{\fm}R$. By abuse of notation, we write $C^{\sbt} \in D(k)$ to mean that the complex $C^{\sbt}$ in $D(R)$ is quasi-isomorphic to a complex of $k$-vector spaces.

Next, we describe the truncation of $\R\Gamma_\fm(R)$ \emph{up to tight closure} as developed in \cite{BMS18,MQ22}, in addition to the analog we define for Frobenius closure that appears in \cref{thm:mainthm}.

\begin{dff}
Let $(R,\fm,k)$ be a $d$-dimensional reduced, equidimensional local ring of prime characteristic $p>0$. The \emph{tight closure truncation} of $\R\Gamma_\fm R$, denoted $\tau^{< d, *}\R\Gamma_\fm R$, is defined to be the object in $D(R)$ fitting into the exact triangle 
$$\tau^{< d, *}\R\Gamma_\fm R \to \R\Gamma_\fm R \to H^d_\fm(R)/0^*_{H^d_\fm(R)}[-d] \xrightarrow{+1}$$ so that $h^i(\tau^{< d,*}\R\Gamma_\fm R) \cong H_\fm^i(R)$ for $i < d$ and $h^d(\tau^{< d, *}\R\Gamma_\fm R) \cong 0^*_{H^d_\fm(R)}$. Similarly, we define the \emph{Frobenius closure truncation} of $\R\Gamma_\fm R$, denoted $\tau^{< d, F}\R\Gamma_\fm R$, to be the object in $D(R)$ fitting into the exact triangle
$$\tau^{< d, F}\R\Gamma_\fm R \to \R\Gamma_\fm R \to H^d_\fm(R)/0^F_{H^d_\fm(R)}[-d] \xrightarrow{+1}$$ so that $h^i(\tau^{< d,F}\R\Gamma_\fm R) \cong H_\fm^i(R)$ for $i < d$ and $h^d(\tau^{< d, F}\R\Gamma_\fm R) \cong 0^F_{H^d_\fm(R)}$. 
\end{dff}

Note for any parameter ideal $\fq = (x_1,\ldots,x_d)$ with $x=x_1\cdot x_2 \cdots x_d$, the kernel of the natural map $R/\fq \to H_\fm^d(R)$ is $\fq^{\lim}/\fq$, where the ideal
\begin{equation}
\fq^{\lim} := \bigcup_{n >0} (x_1^{n+1}, \cdots, x_d^{n+1}) :_R x^n\label{def:lim-closure}
\end{equation}
is called the \emph{limit closure of $\fq$}. Since this map depends only on $\fq$ and $R$, it is independent of the generating set for $\fq$. When $R$ is Buchsbaum, we can also calculate $\qlim$ as \[
\qlim = \mfq + \sum_{i=1}^d (x_1,\ldots,\widehat{x_i},\ldots,x_d):x_i 
\] by \cite[Theorem~4.7]{Got83}. When $R$ is excellent and equidimensional, colon-capturing for tight closure implies $\fq^{\lim} \subseteq \fq^*$.

\begin{rmk}\label{rmk: closure containments}
Suppose $(R,\fm)$ is excellent and equidimensional. Connections between parameter ideals and local cohomology allow us to interpret the Frobenius action on local cohomology through the lens of ideal closures of parameter ideals.
\begin{enumerate}[label=(\alph*)]
    \item \cite[Remark ~7.2(2)]{QS17} The inclusion $\fq \subseteq \qlim$ is equality for some (equivalently for all) parameter ideal(s) $\fq\subseteq R$ if and only if $R$ is Cohen--Macaulay. \label{remark:parameter ideals-2}
    \item \cite[Definition ~6.8]{QS17} If $\fq = \fq^F$ for all parameter ideals $\fq\subseteq R$ then $R$ is said to be \emph{parameter $F$-closed}. If so, $R$ is $F$-injective, see \cite[Theorem~A]{QS17}. \label{remark:parameter ideals-1}
\end{enumerate}
\end{rmk}

\subsection{Cohen--Macaulay and Buchsbaum type conditions} If $(R,\fm)$ is a local Buchsbaum ring, then the equivalent conditions of \cref{intro:thm-buchsbaum} imply that $\tau^{<d}\mathbf{R}\Gamma_\fm(R)$ has finite length cohomology; this is equivalent to $R_\fp$ being Cohen--Macaulay for all $\fp\in\Spec^\circ(R)$ provided that $R$ is excellent and equidimensional \cite[Satz 2.5 and Satz 3.8]{CTS78}. Such rings are said to be \emph{generalized Cohen--Macaulay}. Recall that a ring $R$ is \textit{equidimensional} if $\dim (R/\mfp) = \dim (R)$ for all minimal primes $\mfp$ of $R$. Furthermore, $R$ is said to be \emph{unmixed} if this equality holds for all associated primes of $R$.

Note that $R$ is Buchsbaum if and only if $\widehat{R}$ is Buchsbaum. There is a similar notion for modules, where we say that a finitely generated $R$-module $M$ is \emph{Buchsbaum} if the quantity $\ell_R(M/\fq M)-e(\fq,M)$ is constant as $\fq$ varies over all ideals generated by systems of parameters for $M$.

As discussed in the introduction, our aim is to give a Frobenius closure variant of tight Buchsbaum rings (that is, rings satisfying the conclusions of \cref{thm:ma-pham}). which we start in more detail in Section~\ref{sec:CMFI}. We expect that the next lemma is well known to experts, but we are unaware of a reference.

\begin{lem}\label{lem: buchs implies m q lim in q}
An equidimensional local ring $(R,\fm)$  of dimension $d>0$ is $R$ is Buchsbaum if and only if $\fm \fq^{\lim} \subseteq \fq$ for all parameter ideals $\fq \subseteq R$.
\end{lem}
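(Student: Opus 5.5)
We prove the two implications separately. In both directions we use the standard background on Buchsbaum rings recalled in \cref{intro:thm-buchsbaum}, together with Goto's description of $\qlim$ for Buchsbaum rings.

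\textbf{($\Rightarrow$).} Suppose $R$ is Buchsbaum and fix a system of parameters $x_1,\ldots,x_d$ generating $\fq$. By \cite[Theorem~4.7]{Got83} we have
\[
\qlim=\fq+\sum_{i=1}^d (x_1,\ldots,\widehat{x_i},\ldots,x_d):x_i .
\]
It therefore suffices to show that $\fm\big((x_1,\ldots,\widehat{x_i},\ldots,x_d):x_i\big)\subseteq \fq$ for each $i$.

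Any permutation of a system of parameters is again a system of parameters. So we may reorder the parameters so that $x_i$ comes last, and then apply \cref{intro:thm-buchsbaum}\ref{Schenzel:c} with the index $d-1$. This gives
\[
\fm\big((x_1,\ldots,\widehat{x_i},\ldots,x_d):x_i\big)\subseteq (x_1,\ldots,\widehat{x_i},\ldots,x_d)\subseteq\fq,
\]
which is what we need.

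\textbf{($\Leftarrow$).} Suppose $\fm\qlim\subseteq\fq$ for every parameter ideal $\fq$. We aim to verify \cref{intro:thm-buchsbaum}\ref{Schenzel:c}. Fix a system of parameters $x_1,\ldots,x_d$ and an index $i<d$.

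The key observation is that colons of partial systems of parameters are captured by the limit closure of the full parameter ideal:
\[
(x_1,\ldots,x_i):x_{i+1}\subseteq \fq^{\lim},\qquad \fq=(x_1,\ldots,x_d).
\]
This holds because $\fq^{\lim}/\fq$ is the kernel of $R/\fq\to H^d_\fm(R)$. If $y x_{i+1}\in(x_1,\ldots,x_i)$, then the image of $y$ in $H^d_\fm(R)$ is the \v{C}ech class $[y/(x_1\cdots x_d)]$. Since $yx_{i+1}\in(x_1,\ldots,x_i)$, this fraction can be written with a numerator lying in an ideal generated by some $x_j$ (with $j\le i$) over a denominator that is divisible by $x_j$, so the class vanishes. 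Concretely, one checks that $y\,x_1\cdots x_d\in(x_1^2,\ldots,x_d^2)$, so $y\in(x_1^2,\ldots,x_d^2):x_1\cdots x_d\subseteq\qlim$ by \eqref{def:lim-closure}.

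The hypothesis then gives $\fm((x_1,\ldots,x_i):x_{i+1})\subseteq\fq$. However, condition \ref{Schenzel:c} requires containment in $(x_1,\ldots,x_i)$, not in $\fq$. To fix this we replace the later parameters by high powers. The systems $x_1,\ldots,x_i,x_{i+1}^n,\ldots,x_d^n$ are still systems of parameters, and the same colon-capturing argument applies to each of them, because $(x_1,\ldots,x_i):x_{i+1}\subseteq (x_1,\ldots,x_i):x_{i+1}^n$. Hence
\[
\fm\big((x_1,\ldots,x_i):x_{i+1}\big)\subseteq\bigcap_n (x_1,\ldots,x_i,x_{i+1}^n,\ldots,x_d^n)=(x_1,\ldots,x_i),
\]
where the final equality is Krull's intersection theorem applied in $R/(x_1,\ldots,x_i)$. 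This establishes \ref{Schenzel:c}, and therefore $R$ is Buchsbaum.

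The step most likely to be the obstacle is the colon-capturing containment $(x_1,\ldots,x_i):x_{i+1}\subseteq\fq^{\lim}$, which needs care with the \v{C}ech/limit description. If that argument has a gap, the fallback is to verify directly that
\[
y\,(x_1\cdots x_d)^n\in(x_1^{n+1},\ldots,x_d^{n+1})
\]
for $n=1$, by rewriting $yx_{i+1}=\sum_{j\le i} a_jx_j$ and multiplying through by the remaining parameters. The hypotheses $d>0$ and equidimensionality are not really needed beyond ensuring that the statements make sense.
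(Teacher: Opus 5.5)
Your proof is correct and follows essentially the paper's strategy. For the forward direction you use Goto's description of $\qlim$; for the converse you place a colon ideal inside the limit closure of a parameter ideal whose later generators are replaced by $n$-th powers, then intersect over $n$ using Krull's intersection theorem. The only difference is bookkeeping: you verify condition \ref{Schenzel:c} of \cref{intro:thm-buchsbaum} at every index $i<d$ (which also absorbs the paper's separate $d=1$ case), whereas the paper checks only the last colon $(x_1,\ldots,x_{d-1}):x_d^\infty$ and then invokes \cite[Chapter I, Proposition 1.10]{SV86} to conclude that this suffices.
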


\begin{proof} First, suppose $R$ is Buchsbaum. Let $x_1,\ldots,x_d$ be a system of parameters of $R$ and write $\fq = (x_1,\ldots,x_d)$ with $x=x_1\cdots x_d$. By \cite[Theorem~4.7]{Got87}, $\fm ((x_1^n,\ldots,x_d^n):x^{n-1})\subseteq (x_1^n,\ldots,x_d^n)$ for all $n \ge 1$. Since $\fq^{\lim}$ is the directed union of these colon ideals, we have $\fm \fq^{\lim} \subseteq \fq$.

For the converse, if $d=1$ and $x\in R$ is a parameter, then $(x)^{\lim} = 0:x^\infty+(x)$. Hence, for all $n$, we have $(x^n)^{\lim} = 0:x^{\infty} + (x^n)$. Then, $\fm(0:x) \subseteq \fm (x^n)^{\lim} \subseteq (x^n)$ for all $n$ by hypothesis. Therefore $\fm(0:x) \subseteq \cap_n (x^n) = (0)$, so $R$ is Buchsbaum. If $d\geq 2$ and $\fq=(x_1,\dots, x_d)$ is a parameter ideal, then $$((x_1,\ldots,x_{d-1}):x_d^\infty)+(x_d)\subseteq \fq^{\lim}$$ and hence
\begin{align*}
   (x_1,\ldots,x_{d-1}):x_d^\infty = &\bigcap\limits_{n>0} ((x_1,\ldots,x_{d-1}):x_d^\infty)+(x_d^n)\\
   \subseteq&\bigcap\limits_{n>0} (x_1,\ldots,x_{d-1},x_d^n)^{\lim}.
\end{align*}
Further
\begin{align*}
    \fm((x_1,\ldots,x_{d-1}):x_d^\infty)\subseteq & \bigcap\limits_{n>0} \fm(x_1,\ldots,x_{d-1},x_d^n)^{\lim}\\
    \subseteq & \bigcap\limits_{n>0} (x_1,\ldots,x_{d-1},x_d^n)\\
    =&(x_1,\ldots,x_{d-1})
\end{align*}
so $\fm((x_1,\ldots,x_{d-1}):x_d)\subseteq (x_1,\dots, x_{d-1})$ and $R$ is Buchsbaum (see \cite[Chapter I, Proposition 1.10]{SV86}).
\end{proof}

\begin{lem}\label{lem: buchs imples qlim/q iso to lc}
    Let $(R,\fm,k)$ be a Buchsbaum local ring of dimension $d>0$ and $\mfq$ a parameter ideal of $R$. We have the following isomorphism \[
    \dfrac{\qlim}{\mfq} \cong \bigoplus_{i=0}^{d-1} H^i_\fm(R)^{\binom{d}{i}},
    \] as $k$-vector spaces. If $R$ is additionally of prime characteristic $p>0$, then \[
    \dfrac{\qlim\cap \fq^F}{\mfq} \cong \bigoplus_{i=0}^{d-1} {0^F_{H^i_\fm(R)}}^{\binom{d}{i}}.
    \]
\end{lem}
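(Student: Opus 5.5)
The plan is to use the standard structure of Buchsbaum rings, namely the identification of $\qlim/\fq$ with $\ker(R/\fq\to H^d_\fm(R))$, together with Goto's computation of the local cohomology of $R/\fq$ for a Buchsbaum ring. The first isomorphism is classical. For a Buchsbaum ring and a parameter ideal $\fq$, Goto \cite{Got83} (see also \cite[Chapter I]{SV86}) gives
\[
H^0_\fm(R/\fq)=R/\fq\supseteq \qlim/\fq \cong \bigoplus_{i=0}^{d-1}H^i_\fm(R)^{\binom{d}{i}}.
\]
One way to see this is to note that the Koszul complex $K^{\sbt}(\fq;R)$ and the truncation $\tau^{<d}\R\Gamma_\fm R$, which by \cref{intro:thm-buchsbaum}\ref{Schenzel:d} is quasi-isomorphic to a complex of $k$-vector spaces, combine to give
\[
\tau^{<d}\bigl(\R\Gamma_\fm R\otimes^{\mathbf L}_R K(\fq)\bigr)\qis \bigoplus_{i<d}H^i_\fm(R)[-i]\otimes_k \textstyle\bigwedge^{\sbt}k^d .
\]
Since $\fq$ acts by zero on the $k$-vector spaces $H^i_\fm(R)$ for $i<d$, the Koszul differentials vanish on them. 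Reading off the degree-zero piece in $H^0(R/\fq)$ then gives $\qlim/\fq\cong\bigoplus_i H^i_\fm(R)^{\binom{d}{i}}$, which is $\ker(R/\fq\to H^d_\fm(R))$. By \cref{lem: buchs implies m q lim in q}, $\qlim/\fq$ is a $k$-vector space, so this is an isomorphism of $k$-vector spaces. I would simply cite this.

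For the Frobenius version, I need the isomorphism to be compatible with Frobenius. Frobenius $F^e$ sends $\fq$ to $\fq^{[p^e]}=\fq_{p^e}$, and $\qlim\to\fq_{p^e}^{\lim}$ since both are kernels to $H^d_\fm(R)$ and Frobenius commutes with $R/\fq\to H^d_\fm(R)$. So $x\in\qlim$ lies in $\fq^F$ exactly when $\rho_e(\bar x)=0$ in $\fq_{p^e}^{\lim}/\fq_{p^e}$ for some $e$. The key step is to show that the Goto isomorphism is natural in the system of parameters: the identifications for $\fq$ and for $\fq_{p^e}$ should intertwine $\rho_e$ with $\bigoplus (F^e)^{\oplus\binom{d}{i}}$ on the $H^i_\fm(R)$.

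I would do this concretely using Goto's explicit description $\qlim=\fq+\sum_j (x_1,\dots,\widehat{x_j},\dots,x_d):x_j$. There is a filtration by the ideals $\qi$ and the iterated colon/Koszul pieces, with graded pieces $H^0_\fm(R/\fq_{\le i})$-type modules. These are identified with $H^{i}_\fm(R)$ via the connecting maps of the sequences $0\to R/(\qi: x_{i+1})\xra{x_{i+1}} R/\qi\to R/\qin\to0$. Frobenius maps each such sequence to the corresponding sequence for $x^{p^e}$, with multiplication by $x_{i+1}^{p^e-1}$ intervening on the middle terms. Because $\fm$ kills the relevant local cohomology and the maps $H^i_\fm(R)\to H^i_\fm(R)$ induced by multiplication by $x^{p^e-1}$ compose to the identity along the direct limit description, the induced map on graded pieces is $F^e$ on $H^i_\fm(R)$. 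Since in a Buchsbaum ring the filtration splits, possibly noncanonically, I must check that the kernel of $\rho_e$ is computed summand-wise. Here I would use that $\fm\qlim\subseteq\fq$, so the maps are $k$-semilinear between vector spaces, and that the splittings can be chosen compatibly because each $x_j$ annihilates the lower cohomology. Taking the union over $e$ of the kernels then gives $(\qlim\cap\fq^F)/\fq\cong\bigoplus_{i<d}(0^F_{H^i_\fm(R)})^{\binom{d}{i}}$.

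The main obstacle is exactly this Frobenius-equivariance of the noncanonical splitting. If it fails on the nose, I would instead use the derived statement. The natural map $K^{\sbt}(\fq)\to \R\Gamma_\fm R\otimes K(\fq)$, together with the Frobenius map $K(\fq)\to K(\fq^{[p^e]})$ compatible with $F^e$ on $\R\Gamma_\fm R$, shows that on the quasi-isomorphic $k$-complex the relative Frobenius $\rho_e$ acts as $F^e\otimes\wedge^{\sbt}(\text{Frobenius on }k^d)$. The kernel of this map is then the stated direct sum.
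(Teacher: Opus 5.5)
\paragraph{The first isomorphism.} Your argument here is fine, and it takes a different route from the paper. The paper inducts on $d$: it uses $0\to R/H^0_\fm(R)\xra{x_1}R\to R/x_1R\to0$, the Buchsbaum splitting $H^i_\fm(R/x_1R)\cong H^i_\fm(R)\oplus H^{i+1}_\fm(R)$, and the multiplicity formula for $e(\fq)-\ell_R(R/\qlim)$ to show $\qlim/\fq_1'\cong H^{d-1}_\fm(R)$. Your Schenzel/Koszul argument is more canonical in $\fq$, which is what you need for the second part. You must put the truncation in the right place, though. The complex $\R\Gamma_\fm R\otimes_R K_\bullet(\ux;R)\qis K_\bullet(\ux;R)$ is unaffected by $\tau^{<d}$; the object you want is $(\tau^{<d}\R\Gamma_\fm R)\otimes_R K_\bullet(\ux;R)$. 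Its $h^0$ injects into $R/\fq$ with image $\ker(R/\fq\to H^d_\fm(R))=\qlim/\fq$, because $h^{-1}(H^d_\fm(R)[-d]\otimes_R K_\bullet(\ux;R))=0$.

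\paragraph{The Frobenius part: the gap.} Both your ``compatible splittings'' and your derived fallback rest on one claim. Namely, under the identifications of $\qlim/\fq$ and $(\fq^{[p^e]})^{\lim}/\fq^{[p^e]}$ with $\bigoplus_i H^i_\fm(R)^{\binom di}$, the relative Frobenius $\rho_e$ would be diagonal and equal to $F^e$ on each summand. You do not justify this. The Frobenius $\tau^{<d}\R\Gamma_\fm R\to F^e_*\tau^{<d}\R\Gamma_\fm R$ is a morphism in $D(R)$, not in $D(k)$. Relative to a splitting $\bigoplus_i H^i_\fm(R)[-i]$ it is therefore only block triangular. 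Besides the diagonal blocks $F^e$, it may have components in $\mathrm{Ext}^{i-j}_R(H^i_\fm(R),F^e_*H^j_\fm(R))$ for $j<i$. Such classes can induce nonzero maps $H^i_\fm(R)^{\binom di}\to H^j_\fm(R)^{\binom dj}$ on $h^0(-\otimes_R K_\bullet(\ux;R))$. Already a class in $\mathrm{Ext}^1_R(k,k)$ given by an extension on which $\fq$ acts nontrivially yields a nonzero connecting map on Koszul homology. So for fixed $e$, $\ker\rho_e$ need not be the direct sum of the kernels of $F^e$. Separately, multiplication by $x^{p^e-1}$ is zero on $H^i_\fm(R)$ for $i<d$, not ``the identity''.

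\paragraph{What survives, and what is missing.} Your setup does show that $\rho_e$ preserves the canonical filtration $G_m(\fq)$ of $\qlim/\fq$, given by the images of $h^0((\tau^{<m}\R\Gamma_\fm R)\otimes_R K_\bullet(\ux;R))$. On the graded pieces $G_{m+1}/G_m\cong H^m_\fm(R)^{\binom dm}$ it acts as $F^e$. The missing idea is that Frobenius-nilpotent parts are additive along this filtration. Concretely, an element of $G_{m+1}$ whose image in $G_{m+1}/G_m$ is nilpotent must be correctable by an element of $G_m$ so that it becomes nilpotent. Left exactness is automatic, but this surjectivity is not formal; for abstract triangular $p$-linear maps over an imperfect field it can fail. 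It can be proved, for example, when $k$ is perfect. Induct on $m$, applying the hypothesis to both $\fq$ and $\fq^{[p^e]}$, and use rank--nullity for $p^e$-linear maps. This gives $G_m(\fq^{[p^e]})=\rho_e(G_m(\fq))\oplus(\text{nilpotent part})$ for $e\gg0$, which supplies the correction. The paper's own proof passes over this point in one sentence (``the nilpotent part under the natural Frobenius action''). So you have located the real issue, but your proposal does not close it.
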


\begin{proof}
    Fix a parameter ideal $\mfq=(x_1,\ldots,x_d)$ of $R$. For $1 \le t \le d$, write $R_t = R/(x_1,\ldots,x_t)$ and $\mfq_t = \mfq R_t$. We have $0 \to R_t/\mfq_t^{\lim} \to H^{d-t}_{\fm}(R)$ with \[
    \mfq_t^{\lim} = \dfrac{\bigcup_{n \ge 1} \left(x_1,\ldots,x_t,x_{t+1}^{n+1},\ldots,x_d^{n+1}\right):(x_{t+1}\cdots x_d)^n}{(x_1,\ldots,x_t)}.
    \] 
    We will now induce on $d$. If $d=1$, then \[ (x_1)^{\lim} = (x_1)+\bigcup_n \left(0:_R  x_1^n\right) = (x_1)+ (0:_R x_1) = (x_1)+H^0_\fm(R)\] and hence $(x_1)^{\lim}/(x_1) \cong H^0_\fm(R)$ since $H^0_\fm(R)\cap (x_1) = 0$. Now suppose $d>1$. We have a short exact sequence 
    \begin{center}
        \begin{tikzcd}
            0 \arrow{r} & R/H^0_\fm(R) \arrow{r}{\cdot x_1} & R \arrow{r} & R_1 = R/x_1R \arrow{r} & 0
        \end{tikzcd}
    \end{center} 
    and since $R$ is Buchsbaum,  we have $H^i_\fm(R_1) \cong H^i_\fm(R) \oplus H^{i+1}_\fm(R)$ for all $i < d-1$ (see proof of (iii) $\Rightarrow$ (ii) in \cite[Chapter I, Proposition 2.1]{SV86}). We have another exact sequence
    \begin{equation*}
    \begin{tikzcd}
        0 \arrow{r} & H^{d-1}_\fm(R) \arrow{r} & H^{d-1}_\fm(R_1)\arrow{r} & H^d_\fm(R) \arrow{r}{\cdot x_1} & H^d_\fm(R) \arrow{r} & 0.
    \end{tikzcd}
    \end{equation*}
Furthermore, we have that 
    \begin{equation}\label{eqn: colength formula for lim closure mod a parameter}
        \ell_R(R_1/\mfq_1^{\lim}) = \ell_R(R/\qlim)+\ell_R(H^{d-1}_\fm(R)).
    \end{equation}
    To see this, recall that since $\mfq$ is a parameter ideal of the Buchsbaum ring $R$, we have $$e(\mfq;R)-\ell_R(R/\qlim) = \sum_{i=1}^{d-1} \binom{d-1}{i-1} \ell_R(H^i_\fm(R))$$ by \cite[Theorem 1.1]{CLpseudobuchsbaum}. Then $R_1$ is Buchsbaum since $x_1$ is a parameter element (\cite[Chapter I, Corollary 1.11]{SV86}), and so we also have $e(\mfq_1;R_1)-\ell_R(R_1/\mfq_1^{\lim}) = \sum_{i=1}^{d-2}\binom{d-2}{i-1} \ell_R (H^i_{\fm}(R_1))$. But then, since $e(\mfq;R)=e(\mfq_1;R_1)$, we can use elementary binomial identities to ascertain  \eqref{eqn: colength formula for lim closure mod a parameter}. 

    Letting $\mfq_1' = \cup_n (x_1,x_2^{n+1},\ldots,x_d^{n+1}):(x_2\cdots x_d)^n$ and $R_1 = R/x_1R$, we have $\mfq_1^{\lim} = \mfq_1'/(x_1)$. We can identify $R_1/\mfq_1^{\lim}$ with $R/\mfq_1'$, and hence $\ell_R(\mfq^{\lim}/\mfq_1') = \ell_R(H^{d-1}_{\fm}(R))$. Considering the following diagram 
    \begin{center}
        \begin{tikzcd}
            0 \arrow{r} & \mfq^{\lim}/\mfq_1' \arrow{r}& R/\mfq_1' \arrow{r}\arrow[d,hookrightarrow] & R/\mfq^{\lim} \arrow{r}\arrow[d,hookrightarrow] &0 \\
            0 \arrow{r} & H^{d-1}_\fm(R)\arrow{r}& H^{d-1}_\fm(R_1) \arrow{r} & H^d_\fm(R) 
        \end{tikzcd}
    \end{center}
whose two vertical maps are injective, since $\ell_R(H^{d-1}_{\fm}(R)) = \ell_R(\mfq^{\lim}/\mfq_1')$ we have $\mfq^{\lim}/\mfq_1' \cong H^{d-1}_{\fm}(R)$. 
    Thus, $\qlim/\mfq \cong \mfq_1'/\mfq \oplus H^{d-1}_\fm(R)$ as $k$-vector spaces. By the induction hypothesis, we have \[\mfq_1'/\mfq \cong \bigoplus_{i=0}^{d-2} H^i_\fm(R_1)^{\binom{d-1}{i}}\] as $k$-vector spaces. Moreover, $H^i_{\fm}(R_1) \cong H^i_{\fm}(R) \oplus H^{i+1}_{\fm}(R)$ for all $i \le d-2$, this gives us the first isomorphism. Finally, if $R$ has prime characteristic, then we see that $(\qlim\cap \mfq^F)/\mfq$ is the nilpotent part under the natural Frobenius action, which gives us the second isomorphism in the statement.     
\end{proof}

\section{\texorpdfstring{$F$}{F}-Buchsbaum Rings}\label{sec:CMFI}

Before defining $F$-Buchsbaum rings we will first build some preparatory results in understanding relationships between Frobenius and limit closure of parameter ideals and their connections to local cohomology.

\subsection{Frobenius closure, limit closure, and local cohomology}

Let $(R,\fm)$ be an excellent, equidimensional local ring of prime characteristic $p>0$. Consider the ideal containments $\fq\subseteq \qlim\subseteq \fq^*$ and $\fq\subseteq \fq^F\subseteq \fq^*$. In general, the ideals $\fq^F$ and $\qlim$ are incomparable. In the sequel, we account for this by analyzing their sum, and we denote $\qs:=\fq^F+\fq^{\lim}$ for notational convenience. We also study the Frobenius closure of $\qs$ and build connections between the two ideals under suitable conditions.

\begin{rmk}\label{lem: CMFI iff q=qs}
We can recognize $(\qs)^F$ in another way, specifically $(\qs)^F = (\mfq^{\lim})^F$ for all parameter ideals $\mfq \subseteq R$. To see this, note that $(I+J)^F = (I^F+J^F)^F$ and $(I^F)^F = I^F$ for arbitrary ideals $I,J\subseteq R$, so
\begin{align*}
(\qs)^F=((\mfq^F)^F+(\mfq^{\lim})^F)^F =(\mfq^F+(\mfq^{\lim})^F)^F = (\mfq^{\lim})^F.
\end{align*}
\end{rmk}

The following routine results are aimed at relating $\qs$ and $\qlf$ to the Frobenius closure $0_{H^d_\fm(R)}$. We omit the proof of the first, as it is very similar to that of \cite[Proposition 3.3]{HQ23}.

\begin{lem} \label{lem: computing 0^F using LF}
Let $(R,\fm)$ be a local ring of prime characteristic $p>0$ and of dimension $d>0$. If $\mfq=(x_1,\ldots, x_d)\subseteq R$ is a parameter ideal and $\mfq_j = (x_1^j,\ldots,x_d^j)$ for $j>0$, we have \[
\varinjlim \dfrac{\mfq_j^{{F\textrm{-lim}}}}{\mfq_j^{\lim}} \cong
\varinjlim \dfrac{(\mfq_j^{\lim})^F}{\mfq_j^{\lim}} \cong 0^F_{H^d_\fm(R)}.
\]  
\end{lem}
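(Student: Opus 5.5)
The plan is to realize $H^d_\fm(R)$ as the direct limit $\varinjlim_j R/\fq_j$ and then match both direct limits to $0^F_{H^d_\fm(R)}$ inside it. The transition maps are multiplication by $x=x_1\cdots x_d$. The kernel of $R/\fq_j\to H^d_\fm(R)$ is $\fq_j^{\lim}/\fq_j$, so $H^d_\fm(R)\cong\varinjlim_j R/\fq_j^{\lim}$ with injective transition maps. In particular each $R/\fq_j^{\lim}$ embeds in $H^d_\fm(R)$, compatibly in $j$, via $\iota_j: r+\fq_j^{\lim}\mapsto [r/x^j]$.

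The Frobenius action on $H^d_\fm(R)$ is $F([r/x^j])=[r^p/x^{jp}]$. Hence $\eta=\iota_j(r)$ lies in $0^F_{H^d_\fm(R)}$ if and only if $F^e(\eta)=0$ for some $e$. That holds if and only if $r^{p^e}\in (\fq_{j}^{[p^e]})^{\lim}=(\fq_{jp^e})^{\lim}$ for some $e$. The limit closure depends only on the ideal, and $\fq_j^{[p^e]}=\fq_{jp^e}$.

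The key step is the identity $(\fq_j^{[p^e]})^{\lim}=(\fq_j^{\lim})^{[p^e]}$ up to Frobenius closure. Concretely, I want: $r^{p^e}\in(\fq_{jp^e})^{\lim}$ for some $e$ if and only if $r\in(\fq_j^{\lim})^F$.

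For the backward direction, take $r^{p^e}\in(\fq_j^{\lim})^{[p^e]}$. Write $\fq_j^{\lim}$ as the increasing union of the colons $\fq_{j(n+1)}:x^{jn}$. For $s\in \fq_{j(n+1)}:x^{jn}$ we get $s^{p^e}x^{jnp^e}\in\fq_{j(n+1)p^e}$, so $s^{p^e}\in(\fq_{jp^e})^{\lim}$. Thus $(\fq_j^{\lim})^{[p^e]}\subseteq(\fq_{jp^e})^{\lim}$, and the direction follows.

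The forward direction is the main obstacle. Given $r^{p^e}x^{jnp^e}$, or more generally $r^{p^e}x^{m}$, in $\fq_{jp^e+m}$ with $m$ not a multiple of $p^e$, I cannot directly take $p^e$-th roots. I would first enlarge $m$ so that it is a multiple of $p^e$; this is harmless because the colons increase with $n$. Then I would compare directly in $H^d_\fm(R)$ instead. I expect it is cleaner not to prove the ideal identity at all: define the map $\varinjlim_j (\fq_j^{\lim})^F/\fq_j^{\lim}\to H^d_\fm(R)$ via $\iota_j$ and argue on both sides.

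For injectivity and landing in $0^F$, take $r\in(\fq_j^{\lim})^F$. Then $r^{p^e}\in(\fq_j^{\lim})^{[p^e]}\subseteq(\fq_{jp^e})^{\lim}$, so $F^e(\iota_j(r))=0$. Injectivity is inherited from the injectivity of the $\iota_j$.

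For surjectivity, take $\eta=\iota_j(r)$ with $F^e\eta=0$. Then $r^{p^e}x^{N}\in\fq_{jp^e+N}$ for some $N$. Replace $j$ by $j'=j+t$ and $r$ by $rx^{t}$, choosing $t$ with $tp^e\ge N$. Then $(rx^t)^{p^e}$ lies in $\fq_{jp^e+N}\cdot x^{tp^e-N}$ up to the colon, and I want to land it in $(\fq_{j'}^{\lim})^{[p^e]}$. I would do this by applying the colon description at level $j'$ and using that $\fq_{j'}^{\lim}\supseteq \fq_{j'}:x^{\ast}$-type pieces. If this step resists, the fallback is the argument of \cite[Proposition 3.3]{HQ23}.

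For the first limit $\varinjlim \qs_j/\fq_j^{\lim}$, note $\fq_j^{F\textrm{-lim}}/\fq_j^{\lim}=(\fq_j^F+\fq_j^{\lim})/\fq_j^{\lim}$ sits inside $(\fq_j^{\lim})^F/\fq_j^{\lim}$, so it injects into $0^F_{H^d_\fm(R)}$. For surjectivity, take $\eta=\iota_j(r)$ nilpotent as above. After raising $j$ (multiplying by a power of $x$), I would show that $rx^t\in\fq_{j+t}^F$ modulo $\fq_{j+t}^{\lim}$. This follows because $(rx^t)^{p^{e}}\in \fq_{(j+t)p^e}$ once $t$ is large, by absorbing the colon witness into the extra factor of $x$.
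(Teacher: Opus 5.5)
Your argument is correct. It is the standard direct-limit argument the paper has in mind when it omits the proof and refers to \cite[Proposition 3.3]{HQ23}: realize $H^d_\fm(R)\cong\varinjlim_j R/\fq_j^{\lim}$ with injective transition maps, use $(\fq_j^{\lim})^{[p^e]}\subseteq(\fq_{jp^e})^{\lim}$ to see that both systems land in $0^F_{H^d_\fm(R)}$, and absorb the colon witness into a power of $x$ to get surjectivity.

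In a write-up, drop the level-wise ``key step'' and the hedged surjectivity paragraph. By your own absorption argument, the forward direction of the key step is equivalent to the colon-capturing property of \cref{thm: colon capturing for qlf} for $\fq_j$, which is more than this lemma needs. Your final computation $(rx^t)^{p^e}=r^{p^e}x^{N}\cdot x^{tp^e-N}\in\fq_{(j+t)p^e}=\fq_{j+t}^{[p^e]}$ already gives $rx^t\in\fq_{j+t}^F\subseteq\fq_{j+t}^{F\textrm{-lim}}\subseteq(\fq_{j+t}^{\lim})^F$ outright, with no ``modulo $\fq_{j+t}^{\lim}$'' needed. That proves surjectivity onto $0^F_{H^d_\fm(R)}$ for both direct systems at once.
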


When $0^F_{H^d_\fm(R)}$ is of finite length, to obtain the isomorphism in the lemma above, we need not take the direct limit. 

\begin{lem}\label{lem: computing 0^F via qs}
Let $(R,\fm)$ be a local ring of dimension $d>0$ and of prime characteristic $p>0$. For all parameter ideals $\fq$ of $R$, we have \[\dfrac{\qs}{\qlim}\hookrightarrow \dfrac{\qlf}{\qlim} \hookrightarrow 0^F_{H^d_\fm(R)}.\] Further, if $0^F_{H^d_\fm(R)}$ is of finite length, then \[0^F_{H^d_\fm(R)}\cong \dfrac{\qs_{n}}{\fq_{n}^{\lim}} \cong \dfrac{(\mfq_n^{\lim})^F}{\mfq_n^{\lim}}\] for all $n \gg 0$.
\end{lem}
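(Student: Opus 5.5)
The plan is to realize everything inside $H^d_\fm(R)$ via the natural map $\phi_\fq: R/\fq\to H^d_\fm(R)$, $r+\fq\mapsto [r/x]$ with $x=x_1\cdots x_d$. This map has kernel $\qlim/\fq$, so it induces an injection $R/\qlim\hookrightarrow H^d_\fm(R)$ that is compatible with Frobenius, since $\phi_\fq$ sends $r^p$ to $F([r/x])$ computed with the generators $x_i^p$. The first injection $\qs/\qlim\hookrightarrow\qlf/\qlim$ comes directly from the containments $\qs=\fq^F+\qlim\subseteq(\qlim)^F$, because $\fq^F\subseteq(\qlim)^F$ and $\qlim\subseteq(\qlim)^F$.

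For the second injection I restrict $R/\qlim\hookrightarrow H^d_\fm(R)$ to $\qlf/\qlim$ and check that the image lies in $0^F_{H^d_\fm(R)}$. Take $r\in(\qlim)^F$. Then $r^{p^e}\in(\qlim)^{[p^e]}\subseteq(\fq^{[p^e]})^{\lim}$ for some $e$. The last containment holds because if $s\,x^n\in\fq_{n+1}$, then $s^{p^e}x^{np^e}\in\fq_{n+1}^{[p^e]}$, which is exactly the limit-closure condition for $\fq^{[p^e]}$. Hence $F^e([r/x])=[r^{p^e}/x^{p^e}]=0$, since $r^{p^e}$ lies in the kernel of $\phi_{\fq^{[p^e]}}$. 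So the class of $r$ is Frobenius-nilpotent, and injectivity follows from $\ker\phi_\fq=\qlim/\fq$.

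For the finite-length statement I use \cref{lem: computing 0^F using LF}. The transition maps $\fq_j^{F\textrm{-lim}}/\fq_j^{\lim}\to\fq_{j'}^{F\textrm{-lim}}/\fq_{j'}^{\lim}$ (multiplication by $(x_1\cdots x_d)^{j'-j}$) are compatible with the injections into $0^F_{H^d_\fm(R)}$ constructed above, so they are injective. Thus the images form an increasing chain of submodules of $0^F_{H^d_\fm(R)}$ whose union is all of $0^F_{H^d_\fm(R)}$, by \cref{lem: computing 0^F using LF}. The same holds for the $(\fq_j^{\lim})^F$ chain. Since $0^F_{H^d_\fm(R)}$ has finite length, each chain stabilizes, and at that point the image equals $0^F_{H^d_\fm(R)}$. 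This gives the isomorphisms for $n\gg0$; to pass from $j$ large to "all $n\gg0$" I use the fact that both chains are increasing under divisibility.

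The main obstacle is the "all $n\gg 0$" claim, since the index set of the direct limit is directed by divisibility rather than by $\le$. I would first check that $\fq_n$ maps compatibly to $\fq_{n+1}$, so that the natural system can be indexed by $n$ with injective transitions. If that fails, I would instead note that any $n\ge N$ has $\fq_n\supseteq\fq_{N}^{\,?}$-type comparisons through the map $[r/x^n]\mapsto[rx/x^{n+1}]$, which is the identity on $H^d_\fm(R)$.
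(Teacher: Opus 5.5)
Your proposal is correct and follows essentially the paper's route: the natural map $R/\fq\to H^d_\fm(R)$ with kernel $\qlim/\fq$, then Frobenius-nilpotence of the image (which you check directly via $(\qlim)^{[p^e]}\subseteq(\fq^{[p^e]})^{\lim}$ instead of appealing to \cref{lem: computing 0^F using LF}), then exhaustion of the finite-length module $0^F_{H^d_\fm(R)}$ by the increasing images of $\qs_n/\fq_n^{\lim}$ and $(\fq_n^{\lim})^F/\fq_n^{\lim}$. The ``obstacle'' in your last paragraph is not real: multiplication by $x$ carries $\qs_n$ into $\qs_{n+1}$ and $\fq_n^{\lim}$ into $\fq_{n+1}^{\lim}$ with $[r/x^n]=[rx/x^{n+1}]$, so the system is indexed by $n$ in the usual order and, once an image equals $0^F_{H^d_\fm(R)}$, all later ones do; this is exactly the compatibility the paper uses when it passes to $n=\max_i n(i)$.
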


\begin{proof}
    For each parameter ideal $\fq$ of $R$, we have a canonical map $R/\fq\rightarrow H^d_\fm(R)$ given by $z+\fq \mapsto [z+\fq]$ since $H^d_\fm(R)=\varinjlim R/\fq_n$. Write $g \colon \qs/\fq \rightarrow H^d_\fm(R)$ for the restriction of this map to $\qs/\fq$. Clearly, we have $\im(g)\subseteq 0^F_{H^d_\fm(R)}$ since \Cref{lem: computing 0^F using LF} tells us that $0^F_{H^d_\fm(R)}\cong\varinjlim \qs_n/\fq_n$. 

    Furthermore, we have $\ker(g) = \qlim/\fq$, as  $z+\fq\in \qlim/\fq$ if and only if $x^nz \in \fq_{n+1}$ for some $n$, where $x$ is the product of a system of parameters which generates $\fq$. But then, we must have $g(z+\fq)=[z+\fq]=0\in H^d_\fm(R)$, as $[z+\fq]=[x^nz+\fq_{n+1}]$. Thus, $$(\qs/\fq)/\ker(g)\cong \qs/\qlim\hookrightarrow 0^F_{H^d_\fm(R)}.$$

    Finally, if $\ell_R(0^F_{H^d_\fm(R)})<\infty$, then $0^F_{H^d_\fm(R)}$ is finitely generated as an $R$-module by elements $\xi_1,\cdots,\xi_t$. We can write $\xi_i=[z_i+\fq_{n(i)}^{\lim}]$, where $z_i \in \qs_{n(i)}$. Letting $n=\max \{n(i) | 1 \le i \le t\}$, we have that $\xi_i=[x^{n-n(i)}z_i + \fq_n^{\lim}]$ by definition of the direct limit system. Then, every generator of $0^F_{H^d_\fm(R)}$ is in the image of the injective map $g:\qs_n/\fq_n^{\lim} \rightarrow 0^F_{H^d_\fm(R)}$, so that $g$ is an isomorphism, as required. The remaining statements about $\qlf$ follow using similar arguments.
\end{proof}

It is well known that limit closure for parameter ideals satisfies the colon-capturing property, in that for any parameter ideal $\mfq=(x_1,\ldots,x_d)$ in an excellent equidimensional local ring, we have \[
(x_1^n,\ldots,x_t^n)^{\lim}:(x_1\cdots x_t)^{n-1} \subseteq (x_1,\ldots,x_t)^{\lim} 
\] for all $n \ge 1$. It is also easy to see that Frobenius closure does not satisfy the above colon-capturing property, e.g. any local ring which is $F$-pure but not Cohen--Macaulay will provide a counterexample, which is a major obstacle in studying versions of this Buchsbaum-like property for Frobenius closure alone. Fortunately, we can show that the operation $(-^{\lim})^F$ satisfies colon capturing.

\begin{thm}\label{thm: colon capturing for qlf}
Let $(R,\fm)$ be a $d$-dimensional local ring of prime characteristic $p>0$, and let $\mfq=(x_1,\ldots,x_d)$ be a parameter ideal of $R$. If $x=x_1\cdots x_d$, we have for all $n \ge 1$ \[
((x_1^n,\ldots,x_d^n)^{\lim})^F:x^{n-1}\subseteq ((x_1,\ldots,x_d)^{\lim})^F.
\]
\end{thm}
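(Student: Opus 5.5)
Take $z \in (\fq_n^{\lim})^F :_R x^{n-1}$. The plan is to raise to a Frobenius power, apply the colon capturing for limit closure, and then come back down. Everything rests on the following compatibility of limit closure with Frobenius: for any parameter ideal $\fa=(y_1,\ldots,y_d)$ and $q=p^e$,
\[
(\fa^{\lim})^{[q]} \subseteq (\fa^{[q]})^{\lim}.
\]
This holds because $w\, y^m \in (y_1^{m+1},\ldots,y_d^{m+1})$ implies, after raising to the $q$-th power, that $w^q (y^q)^m \in ((y_1^q)^{m+1},\ldots,(y_d^q)^{m+1})$. Note that $\fa^{[q]}=(y_1^q,\ldots,y_d^q)$ is again a parameter ideal, with product of generators $y^q$.

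\textbf{Step 1.} By definition of Frobenius closure, choose $q=p^e$ with
\[
(x^{n-1}z)^q \in (\fq_n^{\lim})^{[q]}.
\]
By the compatibility above, $(\fq_n^{\lim})^{[q]} \subseteq (\fq_n^{[q]})^{\lim}$, and $\fq_n^{[q]}=(x_1^{nq},\ldots,x_d^{nq})$. Hence
\[
z^q\, x^{(n-1)q} \in (x_1^{nq},\ldots,x_d^{nq})^{\lim}.
\]

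\textbf{Step 2.} Apply colon capturing for limit closure to the system of parameters $y_i=x_i^q$, with $y=x^q$. The stated form $(y_1^n,\ldots,y_d^n)^{\lim}:y^{n-1}\subseteq (y_1,\ldots,y_d)^{\lim}$ gives
\[
z^q \in (x_1^q,\ldots,x_d^q)^{\lim}.
\]
I would prove this colon capturing directly from the definition rather than rely on the excellence hypothesis mentioned before the theorem, since no such hypothesis appears in the statement. Suppose $w\,y^{n-1}\, y^{nm} \in (y_1^{n(m+1)},\ldots,y_d^{n(m+1)})$. Set $M=n(m+1)-1$. Then $w\,y^{M} \in (y_1^{M+1},\ldots,y_d^{M+1})$, which is exactly the defining condition for $w\in(y_1,\ldots,y_d)^{\lim}$. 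So the colon capturing holds in any Noetherian local ring, by pure exponent bookkeeping.

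\textbf{Step 3.} It remains to pass from $z^q\in(\fq^{[q]})^{\lim}$ to $z\in(\fq^{\lim})^F$. Here the inclusion we need is the reverse one,
\[
(\fq^{[q]})^{\lim} \subseteq \bigl((\fq^{\lim})^{[q]}\bigr)^F.
\]
This is the main obstacle: from $z^q x^{qm}\in(x_1^{q(m+1)},\ldots,x_d^{q(m+1)})$ alone, the equation does not visibly descend to a statement about $z x^m$.

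I would handle it with local cohomology.
\begin{itemize}
\item The class $\eta=[z+\fq] \in H^d_\fm(R)$ satisfies $F^e(\eta)=[z^q+\fq^{[q]}]$.
\item The hypothesis $z^q\in(\fq^{[q]})^{\lim}$ says that $[z^q+\fq^{[q]}]=0$, because $(\fq^{[q]})^{\lim}/\fq^{[q]}$ is the kernel of $R/\fq^{[q]}\to H^d_\fm(R)$.
\item Therefore $\eta\in 0^F_{H^d_\fm(R)}$. By the injection $\qlf/\qlim \hookrightarrow 0^F_{H^d_\fm(R)}$ of \cref{lem: computing 0^F via qs} (more precisely, via \cref{lem: computing 0^F using LF}), I would then want to conclude $z\in(\fq^{\lim})^F$.
\end{itemize}

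The subtle point is this last identification. Knowing $\eta\in 0^F_{H^d_\fm(R)}$ gives $z\,x^{N}\in(\fq_{N+1}^{\lim})^F$ for some $N$, not $z\in(\fq^{\lim})^F$ directly. So I must show that the image of $R/\fq$ in $H^d_\fm(R)$ meets $0^F_{H^d_\fm(R)}$ exactly in $\qlf/\fq$. I would attempt this by choosing $e'\gg0$ so that $F^{e'}(\eta)=0$ is already witnessed at level $q'$, namely $z^{q'}x^{q'm}\in\fq_{m+1}^{[q']}$. I would then use the flexibility in $m$, together with Step 2 applied again with exponent $q'$, to absorb the extra factor.
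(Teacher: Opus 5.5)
Your Steps 1 and 2 are correct, and they hold in any Noetherian local ring. The inclusion $(\fa^{\lim})^{[q]}\subseteq(\fa^{[q]})^{\lim}$ is fine, and so is the exponent-bookkeeping proof of colon capturing for limit closure. But they make no progress, because \emph{Step 3 is equivalent to the theorem itself}. Suppose $z^q\in(\fq^{[q]})^{\lim}$. Then $(zx^m)^q\in\fq_{m+1}^{[q]}$ for some $m$, so $z\in \fq_{m+1}^F:x^m\subseteq(\fq_{m+1}^{\lim})^F:x^m$. Deducing $z\in(\fq^{\lim})^F$ from this is exactly the statement with $n=m+1$. You observe this yourself when you note that $\eta\in 0^F_{H^d_\fm(R)}$ only yields $zx^N\in(\fq_{N+1}^{\lim})^F$. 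The proposed way out is circular: applying Step 2 with exponent $q'$ to $z^{q'}x^{q'm}\in\fq_{m+1}^{[q']}$ just returns $z^{q'}\in(\fq^{[q']})^{\lim}$, which is where you started. Moreover, the injection $(\fq^{\lim})^F/\fq^{\lim}\hookrightarrow 0^F_{H^d_\fm(R)}$ points the wrong way for what you need. So, as written, nothing beyond a reformulation has been proved.

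The missing ingredient is precisely the input on which the paper's proof rests. Viewing $R/\fq_n^{\lim}\subseteq H^d_\fm(R)$, one needs
\[ R/\fq_n^{\lim}\cap 0^F_{H^d_\fm(R)}=(\fq_n^{\lim})^F/\fq_n^{\lim}. \]
The paper writes $H^d_\fm(R)/0^F_{H^d_\fm(R)}\cong\varinjlim R/(\fq_n^{\lim})^F$ with multiplication-by-$x$ transition maps. This identity is exactly the injectivity of those maps, which is the colon-capturing statement.

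This identity is not formal. For a submodule $N\subseteq M$ one only has $0^F_N\subseteq N\cap 0^F_M$, since $-\otimes_R F^e_*R$ is not left exact. Here the discrepancy is the kernel $(\fq^{[q]})^{\lim}/(\fq^{\lim})^{[q]}$ of the map $F^e_*\big(R/(\fq^{\lim})^{[q]}\big)\to F^e_*H^d_\fm(R)$. One must show that, on $q$-th powers $z^q$, this kernel disappears after a further Frobenius power. That means some $z^{qq'}$ must lie in the ideal generated by $qq'$-th powers of elements of $\fq^{\lim}$.

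Your stronger target $(\fq^{[q]})^{\lim}\subseteq((\fq^{\lim})^{[q]})^F$ would suffice, but you give no argument for it. The paper also states the intersection identity in a single line without further justification. Either way, a complete proof has to supply an actual argument at exactly this point, and your proposal does not.
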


\begin{proof}
First note the isomorphism $H^d_\fm(R)\cong \varinjlim R/(x_1^n,\ldots,x_d^n)^{\lim}$ where the direct limit system with injective maps comes from the bottom row of the diagram below. \begin{center}
    \begin{tikzcd}
        R/(x_1^n,\ldots,x_d^n) \arrow{r}{\cdot x} \arrow{d} & R/(x_1^{n+1},\ldots,x_d^{n+1})\arrow{d} \\
        R/(x_1^n,\ldots,x_d^n)^{\lim} \arrow{r}{\cdot x} & R/(x_1^{n+1},\ldots,x_d^{n+1})^{\lim}
    \end{tikzcd}
\end{center}

Now, we have that $\varinjlim ((x_1^n,\ldots,x_d^n)^{\lim})^F/(x_1^n,\ldots,x_d^n)^{\lim}\cong 0^F_{H^d_\fm(R)}$, where the multiplication-by-$x$ maps in the direct limit system are injective. Further, we have $\varinjlim R/((x_1^n,\ldots,x_d^n)^{\lim})^F \cong H^d_\fm(R)/0^F_{H^d_\fm(R)}$, and the direct limit system on the left given by multiplication-by-$x$ is injective, as viewing $R/(x_1^n,\ldots,x_d^n)^{\lim} \hookrightarrow H^d_\fm(R)$, we have $R/(x_1^n,\ldots,x_d^n)^{\lim} \cap 0^F_{H^d_\fm(R)} = ((x_1^n,\ldots,x_d^n)^{\lim})^F/(x_1^n,\ldots,x_d^n)$. But then, injectivity of the direct limit system $\varinjlim R/((x_1^n,\ldots,x_d^n)^{\lim})^F$ implies $(-^{\lim})^F$ satisfies colon capturing, as required. 
\end{proof}

\subsection{The \texorpdfstring{$F$}{F}-Buchsbaum condition}
The Buchsbaum condition sits properly between the notions of Cohen--Macaulayness and generalized Cohen--Macaulayness. Indeed, a local ring $(R,\fm)$ is Cohen--Macaulay if and only if $\ell_R(R/\mfq)-e(\mfq) = 0$ for all parameter ideals $\mfq\subseteq R$, while $R$ is generalized Cohen--Macaulay if and only if $\sup\{\ell_R(R/\mfq)-e(\mfq)\}<\infty$ where the supremum runs over all parameter ideals $\fq\subseteq R$. Similarly, the tight Buchsbaum condition of \cite{MQ22} sits properly between $F$-rationality and the property of having an isolated non-$F$-rational locus.

Recall from \cite[Lemma 2.3]{CHL99} (and the fact that $\qlim \subseteq \qs\subseteq (\qlim)^F$), for any ideal $\fq$ generated by a system of parameters in a Noetherian local ring $(R,\fm),$
\begin{align} \label{chl inequalities} e(\fq) \geq \ell_R(R/\qlim) \geq \ell_R(R/\qs) \geq \ell_R(R/(\qlim)^F). \end{align} This forces the following behavior for $\qs$ and $(\mfq^{\lim})^F$.

\begin{prp}\label{prp:CM}
  Let $(R,\fm,k)$ be a $d$-dimensional reduced, excellent, equidimensional local ring of prime characteristic $p>0$. The following are equivalent. 
  \begin{enumerate}[label=(\alph*)]
      \item $R$ is Cohen-Macaulay and $F$-injective (CMFI).
      \item We have $e(\fq)=\ell_R(R/\qs)$ for all parameter ideals $\fq$.
      \item We have $e(\fq)=\ell_R(R/(\mfq^{\lim})^F)$ for all parameter ideals $\fq$.
  \end{enumerate}
\end{prp}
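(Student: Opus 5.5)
Our strategy is to exploit the chain of inequalities \eqref{chl inequalities}, which shows that (c) forces every inequality in the chain to be an equality. For (c)$\Rightarrow$(b), assume $e(\fq)=\ell_R(R/(\fq^{\lim})^F)$. Since $e(\fq)\ge \ell_R(R/\qs)\ge \ell_R(R/(\qlim)^F)$, we obtain $e(\fq)=\ell_R(R/\qs)$. For (b)$\Rightarrow$(a), the same chain gives $e(\fq)=\ell_R(R/\qlim)=\ell_R(R/\qs)$, hence $\qlim=\qs$ and $\fq^F\subseteq \qlim$. It then remains to show that $R$ is Cohen--Macaulay and $F$-injective.

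We first deduce Cohen--Macaulayness from $e(\fq)=\ell_R(R/\qlim)$. This is the main obstacle. The plan is to invoke the standard fact (Cuong--Hoa--Loan type results, which sharpen \cite[Lemma 2.3]{CHL99}) that equality $e(\fq)=\ell_R(R/\qlim)$ for one, or for all, parameter ideals in an unmixed or equidimensional ring forces Cohen--Macaulayness. If that citation is not directly available, we would argue instead by passing to powers $\fq_n$. We would use that $\ell_R(R/\fq_n^{\lim})$ is a polynomial-like function in $n$ whose leading term is $e(\fq_n)=n^d e(\fq)$. The lower-order correction terms are controlled by the lengths, or by the Hilbert-function contributions, of $H^i_\fm(R)$ for $i<d$. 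Vanishing of the defect for all $n$ then forces these to vanish, and we would conclude by \cref{rmk: closure containments}\ref{remark:parameter ideals-2} once we know $\fq=\qlim$.

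Once $R$ is Cohen--Macaulay, we have $\qlim=\fq$, and the equality $\ell_R(R/\fq)=e(\fq)=\ell_R(R/\qs)$ gives $\fq=\qs\supseteq\fq^F$. Thus $R$ is parameter $F$-closed, hence $F$-injective by \cref{rmk: closure containments}\ref{remark:parameter ideals-1}.

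For (a)$\Rightarrow$(c), assume $R$ is CMFI. Then $\qlim=\fq$, so $(\qlim)^F=\fq^F$. Since $F$-injectivity of a Cohen--Macaulay ring gives $\fq=\fq^F$ by \cite[Corollary 3.9]{QS17}, we get $(\qlim)^F=\fq$. Therefore $\ell_R(R/(\qlim)^F)=\ell_R(R/\fq)=e(\fq)$, which is (c). Alternatively, \cref{lem: computing 0^F via qs} yields $(\qlim)^F/\qlim\hookrightarrow 0^F_{H^d_\fm(R)}=0$, which gives the same conclusion.
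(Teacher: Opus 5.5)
Your proof is correct and is essentially the paper's. The chain \eqref{chl inequalities} forces $e(\fq)=\ell_R(R/\qlim)$. Cohen--Macaulayness then follows from \cite[Corollary 3.3(1)]{CN03}, which is the precise form of your ``standard fact''. $F$-injectivity follows from $\fq^F\subseteq\fq$ via \cite{QS17}, and (a)$\Rightarrow$(c) is the routine direction you give.

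Two small cautions. First, that result needs unmixedness (of $\widehat R$). This holds here because $R$ is reduced, excellent and equidimensional, but equidimensionality alone does not suffice. In $k\llbracket x,y,z\rrbracket/(x^2,xy,xz)$ one has $\qlim=\fq+(x)$ and $e(\fq)=\ell_R(R/\qlim)$ for every $\fq$, yet the depth is $0$. Second, your fallback via the powers $\fq_n$ is not an argument as written. It presupposes finite-length lower local cohomology and leaves the key step (vanishing defect $\Rightarrow$ $\fq=\qlim$) unjustified, but you do not need it.
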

\begin{proof}
By assumption, the inequalities $\ell_R(R/\qs)\leq\ell_R(R/\qlim)\leq e(\fq)$ are all equalities. By \cite[Corollary 3.3(1)]{CN03} $R$ is Cohen--Macaulay. It follows that $\ell_R(R/\fq^F)=\ell_R(R/\fq)$, hence $R$ is $F$-injective by \cite[Corollary 3.9]{QS17}.
\end{proof}

On the other hand, we can also analyze the condition of being CMFI on the punctured spectrum through the lens of colength and multiplicity. 

\begin{thm}\label{thm: CMFI-pun}(cf. \cite[Theorem 1.1]{GN02})
    Let $(R,\fm)$ be a $d$-dimensional excellent unmixed local ring of prime characteristic $p>0$. The following are equivalent.
    \begin{enumerate}[label=(\alph*)]
        \item For all $\fp\in\Spec(R)\smallsetminus\{\fm\}$, $R_\fp$ is CMFI.\label{thm: CMFI-pun-a}
        
        \item $R$ is generalized Cohen--Macaulay and $\ell_R\left( 0^F_{H^d_\fm(R)}\right)<\infty$.\label{thm: CMFI-pun-b}
        
        \item $\sup\{\ell_R ((\fq^{\lim})^F/\fq)  \mid \fq \text { is a parameter ideal} \} < \infty$.\label{thm: CMFI-pun-c}
        
        \item $\sup\{\ell_R( \qs /\fq)\mid \fq \text{ is a parameter ideal}\}<\infty$.\label{thm: CMFI-pun-c'}
        
        \item $\sup\{ e(\fq) - \ell_R(R/(\fq^{\lim})^F) \mid \fq \text{ is a parameter ideal} \} < \infty$.\label{thm: CMFI-pun-d}
        
        \item $\sup\{ e(\fq) - \ell_R(R/\qs) \mid \fq \text{ is a parameter ideal} \} < \infty$. \label{thm: CMFI-pun-d'}
    \end{enumerate}
\end{thm}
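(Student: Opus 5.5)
We plan to prove the cycle of implications \ref{thm: CMFI-pun-a}$\Leftrightarrow$\ref{thm: CMFI-pun-b}, then \ref{thm: CMFI-pun-b}$\Rightarrow$\ref{thm: CMFI-pun-c}$\Rightarrow$\ref{thm: CMFI-pun-c'}$\Rightarrow$\ref{thm: CMFI-pun-b}. Separately, we will show that \ref{thm: CMFI-pun-c}$\Leftrightarrow$\ref{thm: CMFI-pun-d} and \ref{thm: CMFI-pun-c'}$\Leftrightarrow$\ref{thm: CMFI-pun-d'}.

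\textbf{The equivalences with \ref{thm: CMFI-pun-d} and \ref{thm: CMFI-pun-d'}.} These come from the identity
\[
e(\fq)-\ell_R(R/I)=\bigl(e(\fq)-\ell_R(R/\fq)\bigr)+\ell_R(I/\fq)
\]
for $I=\qs$ or $(\qlim)^F$. Both summands are nonnegative, by \eqref{chl inequalities} and because $\ell_R(R/\fq)\ge e(\fq)$. The quantity $\sup_\fq(\ell_R(R/\fq)-e(\fq))$ is finite exactly when $R$ is generalized Cohen--Macaulay. So it suffices to observe that each of \ref{thm: CMFI-pun-c}--\ref{thm: CMFI-pun-d'} forces $R$ to be generalized Cohen--Macaulay. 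Since $\qlim\subseteq\qs\subseteq(\qlim)^F$, any of these bounds gives $\sup_\fq \ell_R(\qlim/\fq)<\infty$. The standard characterization (via $\ell(\qlim/\fq)$ and the unmixed hypothesis, cf.\ \cite{CN03}, \cite{CLpseudobuchsbaum}) then says $R$ is generalized Cohen--Macaulay.

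\textbf{\ref{thm: CMFI-pun-a}$\Leftrightarrow$\ref{thm: CMFI-pun-b}.} Assume \ref{thm: CMFI-pun-a}. Then $R$ is generalized Cohen--Macaulay by \cite{CTS78}, since $R$ is excellent and equidimensional. For finiteness of $0^F_{H^d_\fm(R)}$, pass to $\widehat R$ and apply Matlis duality. The Frobenius closure of zero in $H^d_\fm(R)$ is dual to the cokernel of the Cartier-type trace map $F^e_*\omega_R\to\omega_R$ for $e\gg0$ (the kernel of $F^e$ on $H^d_\fm(R)$ stabilizes, by Hartshorne--Speiser--Lyubeznik). This description localizes: at $\fp\neq\fm$, $R_\fp$ is CMFI, so $F$ is injective on $H^{\mathrm{ht}\fp}_{\fp}(R_\fp)$ and the cokernel vanishes at $\fp$. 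Hence the cokernel has finite length, and so does $0^F_{H^d_\fm(R)}$.

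Conversely, assume \ref{thm: CMFI-pun-b}. The Cohen--Macaulay property on $\Spec^\circ(R)$ follows from generalized Cohen--Macaulayness. $F$-injectivity of $R_\fp$ for Cohen--Macaulay $R_\fp$ is detected on the top local cohomology $H^{\mathrm{ht}\fp}_{\fp R_\fp}(R_\fp)$, whose Frobenius-nilpotent part is dual to the localization of the same cokernel; this localization is zero because the cokernel has finite length. I expect this step to be the main obstacle. It needs the duality and localization of the trace cokernel, and the unmixed and excellent hypotheses, to make the canonical-module dictionary valid. The first thing I would try is to reduce to the complete case and cite the corresponding localization argument of Goto--Nakamura \cite{GN02} for $0^*$, replacing the test element by HSL stabilization.

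\textbf{\ref{thm: CMFI-pun-b}$\Rightarrow$\ref{thm: CMFI-pun-c}.} Use \cref{lem: computing 0^F via qs}, which gives an injection $(\qlim)^F/\qlim\hookrightarrow 0^F_{H^d_\fm(R)}$. Combined with the generalized Cohen--Macaulay bound on $\ell_R(\qlim/\fq)$, this yields
\[
\ell_R\bigl((\qlim)^F/\fq\bigr)\le \sup_\fq\ell_R(\qlim/\fq)+\ell_R\bigl(0^F_{H^d_\fm(R)}\bigr)<\infty.
\]

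\textbf{\ref{thm: CMFI-pun-c}$\Rightarrow$\ref{thm: CMFI-pun-c'}.} This is immediate from $\qs\subseteq(\qlim)^F$.

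\textbf{\ref{thm: CMFI-pun-c'}$\Rightarrow$\ref{thm: CMFI-pun-b}.} First, $R$ is generalized Cohen--Macaulay by the remark in the first paragraph. Next, $\ell_R(\qs_n/\qlim_n)$ is bounded uniformly in $n$. By \cref{lem: computing 0^F using LF}, $0^F_{H^d_\fm(R)}=\varinjlim \qs_n/\qlim_n$, where the transition maps are injective (as in the proof of \cref{thm: colon capturing for qlf}). An increasing union of submodules of bounded length has finite length, so $\ell_R(0^F_{H^d_\fm(R)})<\infty$.
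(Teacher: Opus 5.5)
Your proposal has two genuine gaps.

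\textbf{The generalized Cohen--Macaulay step is circular for \ref{thm: CMFI-pun-d} and \ref{thm: CMFI-pun-d'}.} You claim each bound ``gives $\sup_\fq\ell_R(\qlim/\fq)<\infty$''. That is true for \ref{thm: CMFI-pun-c} and \ref{thm: CMFI-pun-c'}, but not for the multiplicity conditions. A bound on $e(\fq)-\ell_R(R/I)$ with $I\supseteq\qlim$ only bounds $e(\fq)-\ell_R(R/\qlim)$. Turning that into a bound on $\ell_R(\qlim/\fq)=(\ell_R(R/\fq)-e(\fq))+(e(\fq)-\ell_R(R/\qlim))$ already requires $\sup_\fq(\ell_R(R/\fq)-e(\fq))<\infty$, which is what you are trying to prove. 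The fix is to use the characterization for unmixed rings stated directly in terms of $\sup_\fq(e(\fq)-\ell_R(R/\qlim))<\infty$, namely \cite[Remark 3.5(1)]{MQ22}, as the paper does. Since $e(\fq)-\ell_R(R/\qlim)\le \ell_R(\qlim/\fq)$, that single citation handles all four conditions. There is also a sign slip: $e(\fq)-\ell_R(R/\fq)\le 0$. The correct statement is that $\ell_R(I/\fq)=(\ell_R(R/\fq)-e(\fq))+(e(\fq)-\ell_R(R/I))$ is a sum of two nonnegative terms.

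\textbf{The equivalence \ref{thm: CMFI-pun-a}$\Leftrightarrow$\ref{thm: CMFI-pun-b} is not proved in the stated generality.} In your plan this is the only route back to \ref{thm: CMFI-pun-a}, so the gap matters. You identify the Matlis dual of $F^e\colon H^d_\fm(R)\to H^d_\fm(F^e_*R)$ with a trace map $F^e_*\omega_R\to\omega_R$ whose cokernel localizes correctly. That needs $F^e_*R$ to be module-finite. The theorem assumes only that $R$ is excellent, and passing to $\widehat R$ does not make it $F$-finite; you would need a $\Gamma$-construction reduction, which is delicate (compare the paper's remark after \cref{thm:(b)=>(d)}). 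Your duality argument is fine when $R$ is $F$-finite.

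The paper avoids duality altogether:
\begin{itemize}
\item \ref{thm: CMFI-pun-a}$\Rightarrow$\ref{thm: CMFI-pun-b} is quoted from \cite[Lemma 3.1]{HQ23}.
\item \ref{thm: CMFI-pun-a} is recovered from \ref{thm: CMFI-pun-d'} by an elementary ideal argument, which is the missing idea.
\end{itemize}
The argument runs as follows.
\begin{enumerate}
\item Since $\fq^F\subseteq\qs$, you get $N:=\sup_\fq\ell_R(\fq^F/\fq)<\infty$, so $\fm^N$ annihilates every $\fq^F/\fq$.
\item Reduce, as the paper does, to primes $\fp\neq\fm$ with $\dim R_\fp=d-1$. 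Choose a system of parameters with $\fb=(x_1,\dots,x_{d-1})\subseteq\fp$.
\item Then $\fm^N\fb^F\subseteq\fm^N(\fb+(x_d^n))^F\subseteq\fb+(x_d^n)$ for every $n$, so $\fm^N\fb^F\subseteq\fb$ by Krull's intersection theorem.
\item Frobenius closure commutes with localization and $\fm^N R_\fp=R_\fp$. Localizing at $\fp$ therefore gives $(\fb R_\fp)^F=\fb R_\fp$ for a parameter ideal of the Cohen--Macaulay ring $R_\fp$.
\item Hence $R_\fp$ is $F$-injective by \cite[Corollary 3.9]{QS17}.
\end{enumerate}

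Your remaining steps are correct: \ref{thm: CMFI-pun-b}$\Rightarrow$\ref{thm: CMFI-pun-c}, \ref{thm: CMFI-pun-c}$\Rightarrow$\ref{thm: CMFI-pun-c'}, and the bounded-increasing-union argument for \ref{thm: CMFI-pun-c'}$\Rightarrow$\ref{thm: CMFI-pun-b}.
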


\begin{proof}
We will show that \ref{thm: CMFI-pun-a}$\Rightarrow$\ref{thm: CMFI-pun-b}$\Rightarrow$\ref{thm: CMFI-pun-c}$\Rightarrow$\ref{thm: CMFI-pun-c'}$\Longleftrightarrow$\ref{thm: CMFI-pun-d'}$\Rightarrow$\ref{thm: CMFI-pun-a} and \ref{thm: CMFI-pun-c}$\Longleftrightarrow$\ref{thm: CMFI-pun-d}. We begin by proving the equivalence of \ref{thm: CMFI-pun-c} and \ref{thm: CMFI-pun-d}. Note that $e(\fq) \leq \ell_R(R/\fq)$ implies that $e(\fq) - \ell_R(R/(\fq^{\lim})^F) \leq \ell_R((\fq^{\lim})^F/\fq)$. For the converse, note that  
 \begin{align}
 e(\fq) - \ell_R(R/(\fq^{\lim})^F) = (e(\fq) - \ell_R(R/\qlim)) + \ell_R((\fq^{\lim})^F/\qlim)
 \end{align}
 implies that $\sup_\fq\{e(\fq)-\ell_R(R/\qlim)\}<\infty,$ hence $R$ is generalized Cohen--Macaulay by \cite[Remark 3.5(1)]{MQ22}. As a consequence, $\sup_\fq\{\ell_R(R/\fq) - e(\fq)\}<\infty,$ implying the same for 
 \[ \ell_R((\fq^{\lim})^F/q) = (\ell_R(R/\fq) - e(\fq)) + (e(\fq) - \ell_R(R/(\fq^{\lim})^F)). \] The equivalence of \ref{thm: CMFI-pun-c'} and \ref{thm: CMFI-pun-d'} is similar.

By \cite[Lemma 3.1]{HQ23} we have that \ref{thm: CMFI-pun-a} implies \ref{thm: CMFI-pun-b}. To see that \ref{thm: CMFI-pun-b} implies \ref{thm: CMFI-pun-c}, first note that \[\sup \{\ell_R(\fq^{\lim}/\fq)\} < \infty\] since $(R, \fm)$ is generalized Cohen--Macaulay. From \cref{lem: computing 0^F via qs}, we get $\ell_R((\fq^{\lim})^F/\fq^{\lim}) \le \ell_R(0^F_{H^d_{\fm}(R)})$ since the map in the direct limit is injective. Also, \ref{thm: CMFI-pun-c} implies \ref{thm: CMFI-pun-c'} as \[\ell_R( \qs /\fq) \leq \ell_R ((\fq^{\lim})^F/\fq).\]

We now show that \ref{thm: CMFI-pun-d'} implies \ref{thm: CMFI-pun-a}. Since
\[e(\fq) - \ell_R(R/\qs) = e(\fq) - \ell_R(R/\mfq^{\lim})+\ell_R(\qs/\mfq^{\lim}), \]
we get $\sup_\fq\{e(\fq)-\ell_R(R/\fq^{\lim})\}<\infty$. Hence $R$ is generalized Cohen--Macaulay by \cite{CTS78}. Now let $\fp\in\Spec(R)\smallsetminus\{\fm\}$ be a non-maximal prime. We will show that $R_\fp$ is $F$-injective, and by induction we may assume that $\dim(R_\fp) = d-1$. Let $\fa = (x_1,\dots, x_d) \subseteq R$ be a parameter ideal such that $\fb:=(x_1,\dots,x_{d-1})\subseteq\fp$. Since $\ell_R(\fq^F/\fq) 
\leq \ell_R(\qs/\fq),$ we get $\sup\{\ell_R(\fq^F/\fq)\} < \infty$. Letting $N=\sup_\fq\{\ell_R(\fq^F/\fq)\}$, we see 
\begin{align*}
    \fm^N \fb^F \subseteq \fm^N(\fb+(x_d^n))^F\subseteq \fm^N(\fb+(x_d^n))
\end{align*}
for all $n>0$; hence, $\fm^N\fb^F\subseteq \fb$. Since Frobenius closure commutes with localization, we have
\begin{align*}
    (\fb R_\fp)^F = \fb^F R_\fp = \fb R_\fp.
\end{align*}
We conclude that $R_\fp$ is $F$-injective by \cite[Corollary 3.9]{QS17}.
\end{proof}

Taken together, \cref{prp:CM} and \cref{thm: CMFI-pun}, along with the inspiration from the Buchsbaum and tight Buchsbaum cases, we suggest the following definition.

\begin{dff}
    Let $(R,\fm)$ be an unmixed local ring of dimension $d>0$ and of prime characteristic $p>0$. We say that $R$ is \textit{$F$-Buchsbaum} if the quantity $e(\mfq)-\ell_R(R/\qs)$ is a constant independent of choice of parameter ideal $\fq\subseteq R$.
\end{dff}

\begin{rmk}\label{rmk: f-inj  + buchs = f-buchs}
    Every excellent, $F$-injective generalized Cohen--Macaulay local ring $(R,\fm)$ is $F$-Buchsbaum. Indeed, from \cite[Theorem 1.1]{Ma15}, we know that $\fq=\fq^F$ for all parameter ideals $\fq$ from which we see that $\qs = \fq^{\lim}$. Further from \cite[Theorem~1.2]{Ma15}, it follows that $R$ is Buchsbaum. By \Cref{intro:thm-buchsbaum}, we have that the quantity \[
    e(\mfq)-\ell_R\left(R/\qs\right) = e(\mfq)-\ell_R\left(R/\mfq^{\lim}\right) = e(\mfq)-\ell_R\left(R/\mfq\right)+\ell_R\left(\mfq^{\lim}/\mfq\right)
    \] is a constant independent of $\mfq$.
\end{rmk}

Before we study this condition in detail, we summarize how this new condition relates to both Buchsbaum and tight Buchsbaum rings. Our first task is to see how $F$-Buchsbaum is related to constancy of the length of $\qs/\mfq$ independently of the parameter ideal $\mfq$. We will also show that an $F$-Buchsbaum ring is Buchsbaum.

\begin{thm}[cf. {\cite[Theorem 3.6]{MQ22}}]\label{thm:(a)<=>(b)}
Let $(R,\fm,k)$ be an excellent, unmixed local ring of dimension $d > 0$. The quantity $\ell_R(\qs/\fq)$ is independent of parameter ideal $\fq$ if and only if  $R$ is $F$-Buchsbaum. Moreover, if $R$ is $F$-Buchsbaum then $R$ is Buchsbaum. 

\end{thm}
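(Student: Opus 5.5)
The plan is to route everything through the decomposition
\[
\ell_R(\qs/\fq) \;=\; \bigl(\ell_R(R/\fq)-e(\fq)\bigr) \;+\; \bigl(e(\fq)-\ell_R(R/\qs)\bigr),
\]
valid for every parameter ideal $\fq$. The goal is to show that constancy of either of the two quantities in the statement forces the first bracket, $\ell_R(R/\fq)-e(\fq)$, to be constant. That constancy is exactly the Buchsbaum property, and it makes the other two quantities differ by a constant. So both the equivalence and the ``moreover'' clause reduce to showing that each hypothesis implies $R$ is Buchsbaum.

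First I reduce to the generalized Cohen--Macaulay situation. Under either hypothesis the corresponding supremum in \cref{thm: CMFI-pun}\ref{thm: CMFI-pun-c'} or \ref{thm: CMFI-pun-d'} is finite. Hence $R$ is CMFI on $\Spec^\circ(R)$, generalized Cohen--Macaulay, and $\ell_R(0^F_{H^d_\fm(R)})<\infty$. For a generalized Cohen--Macaulay ring I then use two facts from the classical theory.
\begin{enumerate}[label=(\roman*)]
\item We have $\ell_R(R/\fq)-e(\fq)\le I(R):=\sum_{i<d}\binom{d-1}{i}\ell_R(H^i_\fm(R))$ for every $\fq$, with equality when $\fq\subseteq\fm^N$ for $N\gg0$ (standard systems of parameters).
\item For standard $\fq$ the length $\ell_R(\qlim/\fq)$ is the constant $\sum_{i<d}\binom{d}{i}\ell_R(H^i_\fm(R))$.
\end{enumerate}
Combining (ii) with \cref{lem: computing 0^F via qs} (which applies because $0^F_{H^d_\fm(R)}$ has finite length), I aim to show that $\ell_R(\qs/\qlim)=\ell_R(0^F_{H^d_\fm(R)})$ for $\fq$ deep inside $\fm$; I expect this to come from replacing $\fq$ by $\fq_n$. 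Consequently, on the cofinal family of ``deep'' parameter ideals all three quantities are constant, and the two constants are compatible with the decomposition above.

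Next I propagate from deep ideals to arbitrary $\fq$. Assume, say, $\ell_R(\qs/\fq)=c$ for all $\fq$. Its value on deep ideals is $c=I(R)+c'$, where $c'$ is the deep value of $e(\fq)-\ell_R(R/\qs)$. For an arbitrary $\fq$, fact (i) gives $e(\fq)-\ell_R(R/\qs)\ge c'$. I need the reverse inequality $e(\fq)-\ell_R(R/\qs)\le c'$, since that forces $\ell_R(R/\fq)-e(\fq)=I(R)$ for every $\fq$ and hence Buchsbaumness. My plan for the reverse inequality is a monotonicity statement along $\fq\mapsto\fq_n$ or $\fq\mapsto\fq\fm^N$-type refinements: the map $\cdot x^{n-1}\colon R/\fq\to R/\fq_n$ sends $\qs$ into $\qs_n$ and $\qlim$ into $\fq_n^{\lim}$. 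I also plan to use the injectivity of the direct systems established in the proof of \cref{thm: colon capturing for qlf}, together with $e(\fq_n)=n^de(\fq)$, to show that $e(\fq)-\ell_R(R/\qs)$ is non-decreasing as one passes to deeper parameter ideals. The symmetric argument starting from constancy of $e(\fq)-\ell_R(R/\qs)$ uses the same two ingredients and bounds $\ell_R(\qs/\fq)$ instead.

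The main obstacle is this monotonicity step. Frobenius closure lacks colon capturing, so $\fq^F$ and $\fq_n^F$ are not obviously related in a length-preserving way. To get around this I will try to split $\qs/\qlim\hookrightarrow 0^F_{H^d_\fm(R)}$ as in \cref{lem: computing 0^F via qs}. The idea is to control the $\qlim$ part by the Buchsbaum-type colon capturing of limit closure, and the $F$-part by the injective limit system, comparing $\qs/\qlim$ and $\qs_n/\fq_n^{\lim}$ inside $0^F_{H^d_\fm(R)}$. If this fails, the fallback is to mimic \cite[Theorem 3.6]{MQ22} directly and verify $\fm\qlim\subseteq\fq$ via \cref{lem: buchs implies m q lim in q}, using the constancy of the length under replacing $x_d$ by $x_d^n$.
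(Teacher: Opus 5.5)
Your framework is right, and you handle the $F$-part as the paper does. You reduce to the generalized Cohen--Macaulay case via \cref{thm: CMFI-pun} and evaluate on $\fq_n$ (the paper uses $\fq^{[p^e]}$) for $n\gg0$. You then bound $\ell_R(\qs/\qlim)\le\ell_R(\zdF)$ for every $\fq$ by the injection of \cref{lem: computing 0^F via qs}, with equality on the deep ideals. The gap is the limit-closure half of the reverse inequality, which you defer to an unproved monotonicity. What you need there is $e(\fq)-\ell_R(R/\qlim)\le e(\fq_n)-\ell_R(R/\fq_n^{\lim})$ for $n\gg0$. That is an \emph{upper} bound on $\ell_R(R/\fq_n^{\lim})$. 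The tools you list (the maps $\cdot x^{n-1}$, the injective direct systems, colon capturing) only give injections into $R/\fq_n^{\lim}$ and filtrations of it, hence \emph{lower} bounds such as $\ell_R(R/\fq_n^{\lim})\ge n^d\ell_R(R/\qlim)$. As sketched the step does not go through, and making it go through amounts to reproving a genuine theorem. Your fallback via \cref{lem: buchs implies m q lim in q} is too vague to close the gap.

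The missing ingredient is the sharp uniform bound for generalized Cohen--Macaulay rings:
\[
e(\fq)-\ell_R(R/\qlim)\le C(R):=\sum_{i=1}^{d-1}\binom{d-1}{i-1}\ell_R(H^i_\fm(R))
\]
for \emph{every} parameter ideal, with equality if and only if $\fq$ is standard (\cite[Corollary 4.3 and Theorem 5.1]{CHL99}). Equivalently, $\ell_R(\qlim/\fq)\le\sum_{i<d}\binom{d}{i}\ell_R(H^i_\fm(R))$ for every $\fq$ (\cite[Remark 2.4(3) and Lemma 3.3]{MQ22}). With this no monotonicity is needed. Write the constant quantity as $\bigl(e(\fq)-\ell_R(R/\qlim)\bigr)+\ell_R(\qs/\qlim)$, respectively $\ell_R(\qlim/\fq)+\ell_R(\qs/\qlim)$. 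Each summand is bounded above uniformly, and both bounds are attained simultaneously on $\fq^{[p^e]}$ for $e\gg0$. Constancy of the sum then forces each summand to be maximal for every $\fq$.

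In the direction starting from constancy of $\ell_R(\qs/\fq)$, this gives constancy of $\ell_R(\qlim/\fq)$, and \cref{intro:thm-buchsbaum} yields Buchsbaumness. In the direction starting from constancy of $e(\fq)-\ell_R(R/\qs)$, you obtain $e(\fq)-\ell_R(R/\qlim)=C(R)$ for all $\fq$. You then also need the equality case (every $\fq$ is standard) to conclude that $R$ is Buchsbaum. Your fact (i) records only the easy half of such a statement, namely that standard ideals attain $I(R)$. Your proposed ``symmetric argument'' of bounding $\ell_R(\qs/\fq)$ from above only reproduces (i); it gives no lower bound on $\ell_R(R/\fq)-e(\fq)$.
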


\begin{proof}
 First assume that $e(\fq)-\ell_R(R/\qs)$ is independent of $\fq$. It follows from 
 \begin{align}
 e(\fq) - \ell_R(R/\qs) = (e(\fq) - \ell_R(R/\qlim)) + \ell_R(\qs/\qlim) \label{eq:(a)<=>(b)-1}
 \end{align}
 that $\sup_\fq\{e(\fq)-\ell_R(R/\qlim)\}<\infty,$ hence $R$ is generalized Cohen--Macaulay by \cite[Remark 3.5(1)]{MQ22}. Setting $$C(R) := \sum\limits_{i=0}^{d-1} \binom{d-1}{i-1}\ell_R(H_\fm^i(R))$$ we see by \cite[Corollary 4.3]{CHL99} and \cite[Theorem 3.7]{SH85} that $e(\fq) - \ell_R(R/\qlim) \leq C(R)$ with equality if and only if $\fq$ is generated by a standard system of parameters \cite[Theorem 5.1]{CHL99}. We also have 
 \begin{align*}
     \ell_R(\qs/\qlim) \leq \ell_R\left(0^F_{H^d_\fm(R)}\right) = \ell_R\left((\fq^{[p^e]})^{F\textrm{-lim}}/(\fq^{[p^e]})^{\lim}\right)
 \end{align*}
for all $e\gg 0$ using Lemma \ref{lem: computing 0^F via qs} as $\sup_\fq\{\ell_R(\qs/\qlim)\}<\infty$ by \eqref{eq:(a)<=>(b)-1}. Using the independence of the quantity $e(\fq) - \ell_R(R/\qs)$ on $\fq$, we get that for all large $e,$
\begin{align*}
     (e(\fq) - \ell_R(R/\qlim)) + \ell_R(\qs/\qlim) 
     &= e(\fq) - \ell_R(R/\qs) \\
     &= e(\fq^{[p^e]}) - \ell_R(R/(\fq^{[p^e]})^{F\textrm{-lim}})  \\
     &= (e(\fq^{[p^e]}) - \ell_R(R/(\fq^{[p^e]})^{\lim})) + \ell_R((\fq^{[p^e]})^{F\textrm{-lim}}/(\fq^{[p^e]})^{\lim})  \\
     &= C(R) + \ell_R(0^F_{H^d_\fm(R)}).
\end{align*}
Thus, both $\ell_R(\qs/\qlim)$ and $e(\fq) - \ell_R(R/\qlim)$ are independent of $\fq.$ Observe then that $R$ is Buchsbaum by \cite[Remark 3.5(2)]{MQ22}. It follows that
$$\ell_R(\qs/\fq) = (\ell_R(R/\fq)-e(\fq)) + (e(\fq) - \ell_R(R/\qs))$$ is independent of $\fq$ as claimed.

If $\ell_R(\qs/\fq)$ is constant for all parameter ideals $\fq$ then $R$ is generalized Cohen--Macaulay. By \cref{thm: CMFI-pun}, $\ell_R(\zdF)<\infty$. Write $\ell_R(\qs/\fq) = \ell_R(\qs/\qlim) + \ell_R(\qlim/\fq)$. By \cite[Remark 2.4(3) and Lemma 3.3]{MQ22}, $$\ell_R(\qlim/\fq) \leq \sum_{i=0}^{d-1} \binom{d}{i} \ell_R(H^i_\fm(R)) = \ell_R\left((\fq^{[p^e]})^{\lim} / (\fq^{[p^e]})\right)$$ for all $e\gg 0$ since $\fq^{[p^e]}$ is standard. \cref{lem: computing 0^F via qs} shows that for all $e\gg 0$, $$\ell_R \left( \qs / \qlim \right) \leq \ell_R(0^F_{H^d_\fm(R)}) = \ell_R\left( (\fq^{[p^e]})^{F\textrm{-lim}} / (\fq^{[p^e]})^{\lim} \right).$$ 
Since $\ell_R(\qs/\fq)$ is constant, we observe that
\begin{align*}
\ell_R(\qs/\qlim) + \ell_R(\qlim/\fq) =& \ell_R(\qs/\fq)\\
=&   \ell_R((\fq^{[p^e]})^{F\textrm{-lim}}/\fq^{[p^e]})  \\ 
=& \ell_R((\fq^{[p^e]})^{F\textrm{-lim}}/(\fq^{[p^e]})^{\lim}) + \ell_R((\fq^{[p^e]})^{\lim}/\fq^{[p^e]}).
\end{align*}
Thus both $\ell_R(\qs/\qlim)$ and $\ell_R(\qlim/\fq)$ are independent of $\fq.$ In particular, $R$ is Buchsbaum by \cite[Remark 3.4]{MQ22} and hence $e(\fq) - \ell_R(R/\qs) = e(\fq) - \ell_R(R/\fq) + \ell_R(\qs/\fq)$ is independent of $\fq$. 
\end{proof}

Now we show that tight Buchsbaum rings are $F$-Buchsbaum.

\begin{prp}\label{prp: tbb implies fbb}
Suppose $(R,\fm,k)$ is an excellent, unmixed local ring of dimension $d>0$. If $R$ is tight Buchsbaum, then $R$ is also $F$-Buchsbaum.
\end{prp}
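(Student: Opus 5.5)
We will verify condition (b) of \cref{thm:(a)<=>(b)}, namely that $\ell_R(\qs/\fq)$ is independent of $\fq$, by decomposing
\[
\ell_R(\qs/\fq)=\ell_R(\qlim/\fq)+\ell_R(\qs/\qlim)
\]
and showing that each summand is constant. Tight Buchsbaum rings are Buchsbaum, since $\fm\fq^*\subseteq\fq$ together with $\qlim\subseteq\fq^*$ gives $\fm\qlim\subseteq\fq$, and \cref{lem: buchs implies m q lim in q} then applies. Hence $\ell_R(\qlim/\fq)=\sum_{i<d}\binom{d}{i}\ell_R(H^i_\fm(R))$ is constant by \cref{lem: buchs imples qlim/q iso to lc}. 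It therefore remains to show that $\ell_R(\qs/\qlim)$ is independent of $\fq$.

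\textbf{Step 1: finiteness.} Since $R$ is tight Buchsbaum, $R$ is $F$-rational on the punctured spectrum by \cite[Theorem 1.1]{GN02}, and hence CMFI there. By \cref{thm: CMFI-pun}, $0^F_{H^d_\fm(R)}$ has finite length. \cref{lem: computing 0^F via qs} then embeds $\qs/\qlim$ into $0^F_{H^d_\fm(R)}$ via $z+\fq\mapsto[z+\fq]$.

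\textbf{Step 2: surjectivity.} We show that this embedding is onto for every $\fq$. Take $\xi\in 0^F_{H^d_\fm(R)}$. Since $0^F\subseteq 0^*$, and by \cref{thm:ma-pham}(c) (i.e. $\fm\subseteq\tau_{\mathrm{par}}(R)$, equivalently $\fm\, 0^*_{H^d_\fm(R)}=0$, cf.\ \cite{MQ22}), the tight closure $0^*_{H^d_\fm(R)}$ is killed by $\fm$. Moreover $0^*_{H^d_\fm(R)}$ lies in the socle and is realized at level $\fq$: the map $\fq^*/\fq\to 0^*_{H^d_\fm(R)}$ is surjective for every parameter ideal in the tight Buchsbaum setting (this is the content of the constancy of $\ell_R(\fq^*/\fq)$ together with $\fq^*/\qlim\hookrightarrow 0^*_{H^d_\fm(R)}$ being an isomorphism, as in \cite{MQ22}). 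So $\xi=[z+\fq]$ for some $z\in\fq^*$.

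We then need $z\in\qs$, that is, $z\in\fq^F+\qlim$. Because $\xi$ is Frobenius-nilpotent, $[z^{p^e}+\fq^{[p^e]}]=F^e(\xi)=0$ in $H^d_\fm(R)$ for $e\gg0$, so $z^{p^e}\in(\fq^{[p^e]})^{\lim}$. This shows only $z\in\qlf$, not yet $z\in\qs$. To close the gap, we use that $R$ is Buchsbaum, so $(\fq^{[p^e]})^{\lim}=\fq^{[p^e]}+\sum_i(\ldots):x_i^{p^e}$ by \cite{Got83}. Alternatively, apply the same surjectivity argument to $\qlf/\qlim\hookrightarrow 0^F$ and compare: if $\ell_R(\qlf/\qlim)=\ell_R(0^F)$ for all $\fq$, and also $\ell_R(\qs/\qlim)$ is achieved at $\fq^{[p^e]}$ with value $\ell_R(0^F)$ (\cref{lem: computing 0^F via qs}), then a counting argument is needed to get constancy for general $\fq$.

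\textbf{Main obstacle.} Step 2 is the hard part, specifically showing $\qs/\qlim\cong 0^F_{H^d_\fm(R)}$ for every $\fq$ rather than only for $\fq^{[p^e]}$. My first attempt will be to prove directly that a lift $z\in\fq^*$ of $\xi$ can be modified by elements of $\qlim$ to lie in $\fq^F$. The idea is to use $\fm z\subseteq\fq$, so that $z^{p^e}$ modulo $\fq^{[p^e]}$ lies in a socle-type colon in $(\fq^{[p^e]})^{\lim}/\fq^{[p^e]}$. That module is a direct sum of copies of the $H^i_\fm(R)$, by \cref{lem: buchs imples qlim/q iso to lc}, with Frobenius acting componentwise. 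Iterating Frobenius further kills the nilpotent components, but it may not kill the non-nilpotent part. If this fails, I would fall back on proving only the constancy of the length, using upper semicontinuity-type bounds from Step 1 combined with the constancy of $\ell_R(\fq^*/\fq)$.
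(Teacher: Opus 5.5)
There is a genuine gap. Your reduction is sound: $R$ is Buchsbaum, $\ell_R(\qlim/\fq)$ is constant, and it remains to show that $\qs/\qlim\hookrightarrow\zdF$ is onto for every $\fq$. Step 2 never establishes this.

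Lifting $\xi\in\zdF$ to $z\in\fq^*$ and using the Frobenius nilpotence of $\xi$ only puts $z$ in $\qlf$. What you have actually shown is therefore $\qlf/\qlim\cong\zdF$ for all $\fq$. Getting from $\qlf$ to $\qs=\fq^F+\qlim$ is exactly the equality $\qs=\qlf$. The paper obtains that equality only as a consequence of $F$-Buchsbaumness (or under weak $F$-nilpotence) and poses it as an open question in general, so it cannot be assumed here.

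None of your proposed repairs closes the gap:
\begin{enumerate}
\item Modifying $z$ by elements of $\qlim$ would require controlling the non-nilpotent components of the class of $z^{p^e}$ in $(\fq^{[p^e]})^{\lim}/\fq^{[p^e]}$. That needs a Frobenius-compatible identification of $\qlim/\fq$ with $(\fq^{[p^e]})^{\lim}/\fq^{[p^e]}$, which you do not have, and you note yourself that the non-nilpotent part may survive.
\item The counting argument only knows $\ell_R(\qs/\qlim)=\ell_R(\zdF)$ at $\fq^{[p^e]}$, or at $\fq_n$ for $n\gg0$. Transferring this back to $\fq$ would need colon capturing for $\fq\mapsto\qs$, which fails; only $\qlf$ has it, by \cref{thm: colon capturing for qlf}.
\item Semicontinuity-type bounds only return the inequality $\le$, which you already have.
\end{enumerate}

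The missing idea is to stop lifting individual elements and instead compute $\fq^F/\fq$ as a whole inside $\fq^*/\fq$. For a tight Buchsbaum ring, \cref{lem: buchs imples qlim/q iso to lc} together with the proof of \cite[Theorem 4.3]{MQ22} gives
$\fq^*/\fq\cong\bigoplus_{i<d}H^i_\fm(R)^{\binom{d}{i}}\oplus 0^*_{H^d_\fm(R)}$,
compatibly with the relative Frobenius actions. Since $\fq^F/\fq$ is exactly the part of $\fq^*/\fq$ killed by some iterate of relative Frobenius, one gets $\fq^F/\fq\cong\bigoplus_{i<d}(0^F_{H^i_\fm(R)})^{\binom{d}{i}}\oplus\zdF$, which has constant length.

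Inclusion--exclusion then finishes the proof:
$\ell_R(\qs/\fq)=\ell_R(\fq^F/\fq)+\ell_R(\qlim/\fq)-\ell_R((\qlim\cap\fq^F)/\fq)$.
The last two terms are constant by the Buchsbaum property and \cref{lem: buchs imples qlim/q iso to lc}. Your Step 2 claim $\qs/\qlim\cong\zdF$ does hold, but only a posteriori, as a consequence of this Frobenius-equivariant splitting rather than of an element-wise lift.
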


\begin{proof}
    For any parameter ideal $\fq,$ we have the short exact sequence of $k$-vector spaces 
    \begin{center}
        \begin{tikzcd}
            0 \arrow{r} & \fq^F/\fq \arrow{r} & \fq^*/\fq \arrow{r} & \fq^*/\fq^F \arrow{r} & 0.  
        \end{tikzcd}
    \end{center}
    Since $R$ is tight Buchsbaum, from \Cref{lem: buchs imples qlim/q iso to lc}, together with the proof of \cite[Theorem.~4.3]{MQ22}, we have an isomorphism of $k$-vector spaces \[ \dfrac{\fq^*}{\fq} \cong \bigoplus_{i=0}^{d-1} H^i_\fm(R)^{\binom{d}{i}} \oplus 0^*_{H^d_\fm(R)}  .
    \] By restricting to the part which vanishes under the relative Frobenius action, it follows that \[
    \dfrac{\fq^F}{\fq} \cong \bigoplus_{i=0}^{d-1} 0^F_{H^i_\fm(R)}{}^{\binom{d}{i}} \oplus 0^F_{H^d_\fm(R)}.
    \] This shows that if $R$ is tight Buchsbaum, then the quantity \[\ell_R(\fq^F/\fq)  = \sum_{i=0}^{d-1} \binom{d}{i} \ell_R(0^F_{H^i_\fm(R)}) + \ell_R(0^F_{H^d_\fm(R)})\] is independent of parameter ideal $\fq$. By \cite[Remark 3.7]{MQ22}, $R$ is Buchsbaum so $\ell_R(\qlim/\fq)$ is also a constant independent of $\fq$. Similarly, $\ell_R(\qlim\cap\fq^F/\fq)$ is independent of $\fq$ by \Cref{lem: buchs imples qlim/q iso to lc}. We then see that \[
    \ell_R(\qs/\fq)=\ell_R(\fq^F/\fq)+\ell_R(\qlim/\fq)-\ell_R(\qlim\cap\fq^F/\fq)
    \] is a constant independent of $\fq$, so $R$ is $F$-Buchsbaum by \Cref{thm:(a)<=>(b)}.
    \end{proof}

If $R$ is $F$-Buchsbaum then the ideals $\qs$ are Frobenius closed, i.e. $\qs=\qlf$ for all $\fq$. In this case, the length $\ell_R(\mfq^F/\mfq)$ does not depend on $\fq$ either.

\begin{thm}\label{lem: qs/q constant length implies qF/q constant length}
Let $(R,\fm,k)$ be an excellent, unmixed local ring of prime characteristic $p>0$, and of dimension $d>0$. If $R$ is $F$-Buchsbaum, then:
\begin{enumerate}[label=(\roman*)]
    \item $\qlf = \qs$ for each parameter ideal $\fq$ of $R$;
    \item $\ell_R(\mfq^F/\mfq)$ is a constant independent of parameter ideal $\mfq$;
    \item $e(\fq)-\ell_R(R/\fq^F)$ is a constant independent of  parameter ideal $\fq$.\label{thm:q-f-lim-3}
\end{enumerate}
\end{thm}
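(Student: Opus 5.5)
The plan is to use the information extracted in the proof of \cref{thm:(a)<=>(b)}: $R$ is Buchsbaum, and both $\ell_R(\qs/\qlim)$ and $e(\fq)-\ell_R(R/\qlim)$ are independent of $\fq$. Item (i) then comes from a length comparison, and items (ii) and (iii) are bookkeeping with \cref{lem: buchs imples qlim/q iso to lc}.

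\emph{Step 1: $\ell_R(\qs/\qlim)$ equals $\ell_R(0^F_{H^d_\fm(R)})$ for every $\fq$.}
\begin{enumerate}[label=(\alph*)]
\item By \cref{thm: CMFI-pun} (condition (d) holds, since $\ell_R(\qs/\fq)$ is constant by \cref{thm:(a)<=>(b)}), the module $0^F_{H^d_\fm(R)}$ has finite length.
\item So \cref{lem: computing 0^F via qs} applies to the parameter ideal $\fq^{[p^e]}=\fq_{p^e}$. For $e\gg 0$ it gives $\ell_R\bigl((\fq^{[p^e]})^{F\textrm{-lim}}/(\fq^{[p^e]})^{\lim}\bigr)=\ell_R(0^F_{H^d_\fm(R)})$.
\item Since $\ell_R(\qs/\qlim)$ does not depend on the parameter ideal, we get $\ell_R(\qs/\qlim)=\ell_R(0^F_{H^d_\fm(R)})$ for every $\fq$.
\end{enumerate}

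\emph{Step 2: proof of (i).} \cref{lem: computing 0^F via qs} gives the chain of injections $\qs/\qlim\hookrightarrow \qlf/\qlim\hookrightarrow 0^F_{H^d_\fm(R)}$. All three modules have finite length, and the outer two have the same length by Step 1. Hence $\ell_R(\qlf/\qlim)=\ell_R(\qs/\qlim)$. Since $\qs\subseteq\qlf$, this forces $\qs=\qlf$.

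This is the step I expect to be the main obstacle. It needs the exact equality in Step 1, not just an upper bound, and that equality relies on finite length of $0^F_{H^d_\fm(R)}$ so that \cref{lem: computing 0^F via qs} can be applied without passing to a direct limit. I would therefore state explicitly why $\ell_R(\qs/\qlim)$ is constant, using the displayed equalities in the proof of \cref{thm:(a)<=>(b)}.

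\emph{Step 3: proof of (ii).} Since $\qs=\fq^F+\qlim$, we have
\[
\ell_R(\fq^F/\fq)=\ell_R(\qs/\fq)-\ell_R(\qlim/\fq)+\ell_R\bigl((\qlim\cap\fq^F)/\fq\bigr).
\]
Each term on the right is independent of $\fq$:
\begin{enumerate}[label=(\alph*)]
\item $\ell_R(\qs/\fq)$ is constant by \cref{thm:(a)<=>(b)}.
\item $R$ is Buchsbaum, so by \cref{lem: buchs imples qlim/q iso to lc} the second and third terms equal $\sum_{i<d}\binom{d}{i}\ell_R(H^i_\fm(R))$ and $\sum_{i<d}\binom{d}{i}\ell_R(0^F_{H^i_\fm(R)})$.
\end{enumerate}
This gives (ii).

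\emph{Step 4: proof of (iii).} Write
\[
e(\fq)-\ell_R(R/\fq^F)=\bigl(e(\fq)-\ell_R(R/\fq)\bigr)+\ell_R(\fq^F/\fq).
\]
The first summand is constant because $R$ is Buchsbaum, and the second is constant by (ii).
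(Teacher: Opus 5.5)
Your proposal is correct and follows essentially the same route as the paper. Both arguments run as follows: finiteness of $\ell_R(0^F_{H^d_\fm(R)})$ via \cref{thm: CMFI-pun}; then $\ell_R(\qs/\qlim)=\ell_R(0^F_{H^d_\fm(R)})$ by evaluating the constant on $\fq_n$ for $n\gg 0$ with \cref{lem: computing 0^F via qs}; then the squeeze $\qs/\qlim\subseteq\qlf/\qlim\subseteq 0^F_{H^d_\fm(R)}$ for (i); and finally the same three-term length formula with \cref{lem: buchs imples qlim/q iso to lc} for (ii) and (iii). The only cosmetic difference is in how constancy of $\ell_R(\qs/\qlim)$ is obtained: the paper writes it as $D-C$ from the constancy of $\ell_R(\qs/\fq)$ and $\ell_R(\qlim/\fq)$, whereas you quote it from the proof of \cref{thm:(a)<=>(b)}.
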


\begin{proof}
$R$ is Buchsbaum and $R$ is $F$-injective on the punctured spectrum by \Cref{thm:(a)<=>(b),thm: CMFI-pun}, so $0^F_{H^j_\fm(R)}$ are finite length modules for all $0 \le j \le d$. By hypothesis, there is a constant $D$ such that $\ell_R(\qs/\mfq)=D$ for all parameter ideals $\mfq$ of $R$. Similarly, by \Cref{intro:thm-buchsbaum}, there is a constant $C$ such that $\ell_R(\qlim/\mfq)=C$ for all parameter ideals $\mfq$ of $R$. The natural short exact sequence yields $\ell_R(\qs/\qlim) = D-C$ for all parameter ideals $\mfq$. By \Cref{lem: computing 0^F via qs}, we have $\ell_R\left(0^F_{H^d_\fm(R)}\right)=D-C$ as well. Then, from the inclusions \[
\qs/\qlim \subseteq \qlf/\qlim \subseteq 0^F_{H^d_\fm(R)},
\] we have that $\qs=\qlf$ for all parameter ideals $\mfq$. 

Now, from the diagram
\begin{center}
\begin{tikzcd}
    0 \arrow{r} & \dfrac{\mfq^F\cap\qlim}{\mfq} \arrow{r} & \dfrac{\mfq^F}{\mfq} \arrow{r} & \dfrac{\mfq^F}{\mfq^F\cap \qlim}\arrow[d,"\cong"] \arrow{r} & 0 \\
    0 \arrow{r} & \dfrac{\qlim}{\mfq} \arrow{r} & \dfrac{\qs}{\mfq} \arrow{r} & \dfrac{\qs}{\qlim} \arrow{r} & 0
    \end{tikzcd}
\end{center} with exact rows we see \[
\ell_R\left( \dfrac{\mfq^F}{\mfq} \right) = \ell_R\left( \dfrac{\qs}{\mfq} \right) - \ell_R\left( \dfrac{\qlim}{\mfq} \right) + \ell_R\left( \dfrac{\mfq^F\cap \qlim}{\mfq} \right). 
\] Each term on the right-hand side is independent of $\mfq$ by hypothesis, \Cref{intro:thm-buchsbaum}, and \Cref{lem: buchs imples qlim/q iso to lc}. Finally, \ref{thm:q-f-lim-3} follows from the constancy of $\ell_R(R/\fq)-e(\fq)$.
\end{proof}

Another setting in which we have $\qs=\qlf$ is if we assume $R$ is either $F$-nilpotent, or Buchsbaum and weakly $F$-nilpotent.

\begin{rmk}
Suppose $R$ is an excellent, equidimensional local ring of prime characteristic $p>0$. If $R$ is $F$-nilpotent, we have $\fq^* = \fq^F$ for all parameter ideals $\mfq$ by \cite[Thm. 5.11]{PQ19}. Since $\fq^{\lim} \subseteq \fq^*$, we indeed have $\qs = \fq^F = (\fq^{\lim})^F$. 
\end{rmk}

If we assume $R$ is Buchsbaum, we can replace $F$-nilpotence with weak $F$-nilpotence.

\begin{prp}\label{prop:wfn-q-lim-F}
  Let $(R, \fm)$ be a Buchsbaum, weakly $F$-nilpotent local ring of dimension $d>0$. For every parameter ideal $\fq\subseteq R$, we have $\fq^{\lim} \subseteq \fq^F$, and consequently, we have the equality $\qs = \mfq^F = (\mfq^{\lim})^F$.
\end{prp}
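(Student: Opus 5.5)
The plan is to use the structure of $\qlim/\fq$ given by \Cref{lem: buchs imples qlim/q iso to lc}. Since $R$ is Buchsbaum, that lemma gives $k$-vector space isomorphisms
\[
\frac{\qlim}{\fq}\cong \bigoplus_{i=0}^{d-1} H^i_\fm(R)^{\binom{d}{i}},\qquad \frac{\qlim\cap\fq^F}{\fq}\cong \bigoplus_{i=0}^{d-1} {0^F_{H^i_\fm(R)}}^{\binom{d}{i}}.
\]
Weak $F$-nilpotence says $0^F_{H^i_\fm(R)}=H^i_\fm(R)$ for all $i<d$. Since $R$ is Buchsbaum, all these modules have finite length, so the two vector spaces have the same finite dimension. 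The inclusion $(\qlim\cap\fq^F)/\fq\subseteq \qlim/\fq$ is therefore an equality, which gives $\qlim\subseteq\fq^F$.

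The delicate point is whether the second isomorphism in \Cref{lem: buchs imples qlim/q iso to lc} is compatible with the first, i.e.\ whether $(\qlim\cap\fq^F)/\fq$ really corresponds to the Frobenius-nilpotent part of the first decomposition. The paper only sketches this. If I wanted an independent argument, I would proceed as follows.

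\begin{itemize}
\item[] Induct on $d$ using the identification $\qlim/\fq_1'\cong H^{d-1}_\fm(R)$ from the proof of that lemma, where $\fq_1'$ is the preimage of the limit closure in $R_1=R/(x_1)$.
\item[] For $z\in\qlim$, the class of $z$ in $H^{d-1}_\fm(R)$ is killed by some $F^e$. Lifting this relation shows $z^{p^e}\in (\fq^{[p^e]})_1'$, the analogous ideal for $\fq^{[p^e]}$.
\item[] Apply induction to $R_1$: it is Buchsbaum, and its lower local cohomology is nilpotent because $H^i_\fm(R_1)$ is built from $H^i_\fm(R)$ and $H^{i+1}_\fm(R)$ with compatible Frobenius actions. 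This places $(\fq^{[p^e]})_1'$ inside the Frobenius closure of $\fq^{[p^e]}$, so $z\in\fq^F$.
\end{itemize}

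A cleaner alternative uses \Cref{thm:wfn-rperf}: $\rperf$ is big Cohen--Macaulay, hence colon-capturing holds in $\rperf$, so $\qlim\rperf\subseteq \fq\rperf$. An element $z\in R$ with $z\in\fq\rperf$ satisfies $z^{p^e}\in\fq^{[p^e]}$ for some $e$, i.e.\ $z\in\fq^F$. This gives $\qlim\subseteq\fq^F$ directly, without even using the Buchsbaum hypothesis. I would present this argument as the main proof and keep the dimension count above as corroboration.

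The consequences then follow formally. Since $\qlim\subseteq\fq^F$, we get $\qs=\fq^F+\qlim=\fq^F$. Moreover $\qlim\subseteq\fq^F$ gives $(\qlim)^F\subseteq(\fq^F)^F=\fq^F$, while $\fq\subseteq\qlim$ gives $\fq^F\subseteq(\qlim)^F$. Together these give $\qs=\fq^F=(\qlim)^F$.

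The main obstacle is justifying $\qlim\rperf\subseteq\fq\rperf$. I would check that $x_1,\dots,x_d$ is a regular sequence on $\rperf$: it is a system of parameters and $\rperf$ is balanced big Cohen--Macaulay. Then $(x_1^{n+1},\dots,x_d^{n+1}):_{\rperf}x^n=\fq\rperf$ by the standard regular-sequence colon computation.
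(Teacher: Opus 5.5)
Your main argument is correct, but it takes a different route from the paper.

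The paper stays inside Buchsbaum theory. It uses the formula $\qlim=\fq+\sum_{i}(x_1,\ldots,\widehat{x_i},\ldots,x_d):_R x_i$ and identifies each quotient $\bigl((x_1,\ldots,\widehat{x_i},\ldots,x_d):_R x_i\bigr)/(x_1,\ldots,\widehat{x_i},\ldots,x_d)$ with $\bigoplus_{j} H^j_\fm(R)^{\binom{d-1}{j}}$. From weak $F$-nilpotence it then concludes that the relative Frobenius action on this quotient is nilpotent. Hence each colon ideal lies in the Frobenius closure of $(x_1,\ldots,\widehat{x_i},\ldots,x_d)$, and so in $\fq^F$. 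That last step tacitly needs the structure isomorphism to be compatible with relative Frobenius. This is the same kind of point you raised about \cref{lem: buchs imples qlim/q iso to lc}.

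You instead get colon-capturing directly in $\rperf$. You then use that the kernel of $R/\fq\to\rperf/\fq\rperf$ is $\fq^F/\fq$, a fact the paper also uses in the proof of \cref{thm:(d)=>(c)}.

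What each route buys:
\begin{itemize}
\item Your route is shorter and needs no Frobenius-equivariance bookkeeping. It also gives $\qlim\subseteq\fq^F$ for every weakly $F$-nilpotent ring covered by \cref{thm:wfn-rperf}, so the Buchsbaum hypothesis is not needed for this inclusion.
\item The paper's route avoids the big Cohen--Macaulay input. It also stays parallel to the Buchsbaum decompositions used elsewhere in the paper.
\end{itemize}

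Your proof rests on the \emph{balanced} half of \cref{thm:wfn-rperf}. Vanishing of $H^i_\fm(\rperf)$ for $i<d$ does not by itself force every system of parameters to be regular on a non-finitely generated module. You invoke balancedness correctly, exactly as the paper does. Under the Buchsbaum hypothesis you can also check it directly:
\begin{itemize}
\item Every system of parameters is filter-regular.
\item This gives Frobenius-compatible isomorphisms $H^i_\fm(R)\cong H^i_{(x_1,\ldots,x_j)}(R)$ for $i<j$.
\item Hence $H^i_{(x_1,\ldots,x_j)}(\rperf)=0$ for all $i<j\le d$, which implies that $x_1,\ldots,x_d$ is regular on $\rperf$.
\end{itemize}

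Your dimension count via \cref{lem: buchs imples qlim/q iso to lc} is also valid as stated, since it only compares lengths. The inductive sketch is not needed.
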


\begin{proof}
Let $\fq = (x_1, \ldots, x_d)$ be a parameter ideal. We have 
$$\fq^{\lim} = \fq + \sum_{i=1}^n (x_1, \ldots, \widehat{x}_i, \ldots, x_d) :_R x_i.$$
For each $i \ge 1$, we have 
$$\frac{(x_1, \ldots, \widehat{x}_i, \ldots, x_d) :_R x_i}{(x_1, \ldots, \widehat{x}_i, \ldots, x_d)} \cong \bigoplus_{j=0}^{d-1} H^j_{\fm}(R)^{\binom{d-1}{j}}.$$ 
As $R$ is weakly $F$-nilpotent, the relative Frobenius action of the left-hand side is also nilpotent. So
$$(x_1, \ldots, \widehat{x}_i, \ldots, x_d) :_R x_i \subseteq (x_1, \ldots,\widehat{x}_i, \ldots, x_d)^F.$$
Hence $\fq^{\lim} \subseteq \fq^F$. The last assertion is clear.
\end{proof}

\cref{prop:wfn-q-lim-F} and \Cref{lem: qs/q constant length implies qF/q constant length} prompt the following question.

\begin{qst}
Let $(R,\fm)$ be an equidimensional local ring of dimension $d>0$ and of prime characteristic $p>0$. Under what hypotheses on $R$ does the equality $\qs=(\qlim)^F$ hold for all parameter ideals $\fq\subseteq R$?
\end{qst}

\section{Proof of the main theorem}\label{sec:main}

The goal of this section is to prove \cref{thm:mainthm} stated in the introduction, which we restate below for convenience of the reader. Recall that if the excellent, equidimensional local ring $(R,\fm)$ is $F$-Buchsbaum, then $R$ is Buchsbaum and $\qs = (\fq^{\lim})^F$ for all parameter ideals $\fq$ by \cref{thm:(a)<=>(b)} and \Cref{lem: qs/q constant length implies qF/q constant length}.

\begin{thm}\label{thm:mainthm-restated}
For an excellent $d$-dimensional unmixed local ring $(R,\fm,k)$ of prime characteristic $p>0$, consider the following conditions:
\begin{enumerate}[label=(\alph*)]
\item $R$ is $F$-Buchsbaum.\label{main-thm-0}
\item\label{a} $\fm (\qs) \subseteq \fq$ for all $\fq$ generated by system of parameters. 
\item\label{b} the $F$-truncation $\tau^{<d,F}\mathbf{R} \Gamma_\fm(R)$ is quasi-isomorphic to a complex of $k$-vector spaces.
\end{enumerate}
If $R$ is $F$-finite then \ref{main-thm-0} implies \ref{a}. If $R$ is $F$-finite and weakly $F$-nilpotent then \ref{main-thm-0} implies \ref{b}. If $R$ is weakly $F$-nilpotent and has a dualizing complex, then \ref{b} $\Rightarrow$ \ref{a} $\Rightarrow$ \ref{main-thm-0}. In particular, if $R$ is weakly $F$-nilpotent and $F$-finite then \ref{main-thm-0}--\ref{b} are equivalent. Finally, if $R$ is Cohen--Macaulay and $F$-finite, then \ref{main-thm-0}--\ref{b} are equivalent to:
\begin{enumerate}[label=(\alph*)]
\setcounter{enumi}{3}
    \item \label{c} there exists a parameter ideal $\mathfrak{Q} \subseteq \bigcap_{\fq} (\fq:\fq^F)$ (where the intersection ranges over all parameter ideals $\fq\subseteq R$) such that $\mathfrak{Q}:\mathfrak{Q}^F\supseteq\fm$. 
\end{enumerate}
\end{thm}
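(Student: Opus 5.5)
We prove the implications separately, following the route of \cite[Theorem 5.1]{MQ22} with tight closure replaced by Frobenius closure.

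\textbf{(a) $\Rightarrow$ (b), $R$ $F$-finite.} By \cref{thm:(a)<=>(b)} and \cref{lem: qs/q constant length implies qF/q constant length}, $R$ is Buchsbaum and $\qs=\qlf$. By \cref{lem: buchs implies m q lim in q} we already have $\fm\qlim\subseteq\fq$, so it suffices to show $\fm\fq^F\subseteq\fq$. We imitate the classical argument that constancy of $\ell(\fq^{\lim}/\fq)$ forces $\fm\fq^{\lim}\subseteq\fq$.
\begin{itemize}
\item Take $y\in\fm$ and $z\in\qs$. We may enlarge $y$ to a parameter by adding a high power of a parameter; this is possible since $\fm$ is generated by parameter elements.
\item Compare $\fq=(x_1,\dots,x_d)$ with $\fq'=(x_1,\dots,x_{d-1},x_d y)$, or with $(x_1,\dots,x_{d-1},x_d^2)$.
\item Constancy of $\ell(\qs/\fq)$, together with colon capturing for $(-^{\lim})^F$ (\cref{thm: colon capturing for qlf}), gives a length identity $\ell((\fq')^{F\text{-lim}}/\fq')=\ell(\qs/\fq)$. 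The multiplication map $\qs/\fq\xrightarrow{\cdot x_d}(\fq')^{F\text{-lim}}/\fq'$ is then forced to behave like the Buchsbaum case and to kill $\fm$.
\end{itemize}
$F$-finiteness enters so that Frobenius closure of parameter ideals is computed at a uniform finite level $e$. This lets us pass between $\fq$ and $\fq^{[p^e]}$ without losing constancy.

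\textbf{(a) $\Rightarrow$ (c), $R$ $F$-finite and weakly $F$-nilpotent.} Here $R$ is Buchsbaum, so by Schenzel $\tau^{<d}\R\Gamma_\fm R\in D(k)$. The triangle
\[\tau^{<d}\R\Gamma_\fm R\to\tau^{<d,F}\R\Gamma_\fm R\to 0^F_{H^d_\fm(R)}[-d]\xrightarrow{+1}\]
shows that we must also prove $\fm\, 0^F_{H^d_\fm(R)}=0$ and that the connecting class in $\operatorname{Ext}$ between $k$-vector spaces splits as a $k$-complex.
\begin{itemize}
\item The vanishing $\fm\,0^F_{H^d_\fm(R)}=0$ follows from (b) and \cref{lem: computing 0^F via qs}.
\item For the splitting, use \cref{thm:wfn-rperf}: $\R\Gamma_\fm(R)\to\R\Gamma_\fm(\rperf)=H^d_\fm(\rperf)[-d]$. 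The kernel of $H^d_\fm(R)\to H^d_\fm(\rperf)$ is exactly $0^F_{H^d_\fm(R)}$. Hence $\tau^{<d,F}\R\Gamma_\fm R$ is the fiber of $\R\Gamma_\fm R\to\R\Gamma_\fm(\rperf)$ up to image considerations.
\item Following \cite{BMS18,MQ22}, represent this fiber by a Čech-type construction on a standard system of parameters. Then show its differentials are $\fm$-linear zero after replacing $\fq$ by $\fq^{[p^e]}$, using (b).
\end{itemize}
\textbf{I expect this splitting to be the main obstacle.} The first attempt will be to show that multiplication by each $y\in\fm$ is zero on $\tau^{<d,F}\R\Gamma_\fm R$ in $D(R)$. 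The argument is that $y$ factors through Koszul complexes on parameters, so a complex killed by $\fm$ as a map lies in $D(k)$, exactly as in Schenzel's proof.

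\textbf{(c) $\Rightarrow$ (b), dualizing complex and weak $F$-nilpotence.} Apply $\R\Gamma_\fm$ of the Koszul complex $K(\fq)$. Truncation compatibility identifies $\qlf/\fq$, which equals $\qs/\fq$ by \cref{prop:wfn-q-lim-F}, with $h^0$ of $K(\fq)\otimes\tau^{<d,F}\R\Gamma_\fm R$ shifted by $d$. If the complex lies in $D(k)$, this is killed by $\fm$. The dualizing complex is used to make this identification, as in \cite{MQ22}.

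\textbf{(b) $\Rightarrow$ (a).} Condition (b) gives $\fm\qlim\subseteq\fq$, so $R$ is Buchsbaum by \cref{lem: buchs implies m q lim in q}. By \cref{prop:wfn-q-lim-F}, $\qs=\fq^F=\qlf$, and $\qs/\fq$ is a $k$-vector space. The dimension of $\qs/\fq$ equals $\ell(\qlim/\fq)+\ell(\qs/\qlim)$. The first term is constant. For the second, use the direct system of \cref{lem: computing 0^F via qs}: the transition maps are multiplication by $x$, which lies in $\fm$ and hence kills $\qs_n/\fq_n$ modulo $\fq_{n+1}$. This forces each term to equal $0^F_{H^d_\fm(R)}$ already, so $\ell(\qs/\fq)$ is constant and \cref{thm:(a)<=>(b)} gives (a).

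\textbf{The Cohen--Macaulay case, (d).} Here $\qlim=\fq$, so $\qs=\fq^F$, and (b) reads $\fm\subseteq\bigcap_\fq(\fq:\fq^F)$; thus (b) $\Rightarrow$ (d) by choosing any parameter ideal $\fQ$ inside $\fm$ with large enough powers. Conversely, (d) with $R$ Cohen--Macaulay gives $\fm\fQ^F\subseteq\fQ$, and $\fQ$ lies in every $\fq:\fq^F$. We then propagate to arbitrary $\fq$ by the same socle/colon argument used above: $\fq^F/\fq\cong 0^F_{H^d_\fm(R)}\cap\operatorname{soc}$-type pieces via the injection into $H^d_\fm(R)$.
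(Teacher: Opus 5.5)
Your argument for (b) $\Rightarrow$ (a) contains an actual error. Hypothesis (b) applied to $\fq_n$ gives $x\,\qs_n\subseteq\fq_n$, not $x\,\qs_n\subseteq\fq_{n+1}$. So nothing makes the transition maps in \cref{lem: computing 0^F via qs} vanish. In fact the maps $\qs_n/\fq_n^{\lim}\to\qs_{n+1}/\fq_{n+1}^{\lim}$ are injective with colimit $0^F_{H^d_\fm(R)}$, so if they were zero you would get $0^F_{H^d_\fm(R)}=0$. What must be proved is that the injection $\qs/\qlim\hookrightarrow 0^F_{H^d_\fm(R)}$ is onto for every $\fq$. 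The paper (\cref{thm:(c)=>(b)WNil}) shows $\fq^F/\fq\to 0^F_{H^d_\fm(R)}$ is surjective by climbing through the relative Frobenius closures $0^\rho_{H^{d-i}_\fm(R/\fq_{\le i})}$. Condition (b), applied to the parameter ideals $(\fq_{\le i},x_{i+1}^n,\dots,x_d^n)$, gives $x_{i+1}\cdot 0^\rho_{H^{d-i}_\fm(R/\fq_{\le i})}=0$. Hence each element lifts along the connecting map from $H^{d-i-1}_\fm(R/\fq_{\le i+1})$. Weak $F$-nilpotence, through the nilpotence of the lower local cohomology of $R/\fq_{\le i}$ \cite[Theorem 4.2]{PQ19}, is what lets the lift be chosen in $0^\rho$. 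So weak $F$-nilpotence is used essentially here, not only to get $\qs=\fq^F$.

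For (a) $\Rightarrow$ (b) and (a) $\Rightarrow$ (c) you give no mechanism: ``forced to behave like the Buchsbaum case'' is not an argument. Moreover, the principle you plan to use for the splitting is false: a complex on which every $y\in\fm$ acts as $0$ in $D(R)$ need not lie in $D(k)$. Over $R=k[s]/(s^2)$, the Koszul complex $[R\xrightarrow{s}R]$ has $s\cdot\mathrm{id}$ null-homotopic and cohomology $k$ in degrees $0$ and $1$. Yet it is perfect, so it is not $k\oplus k[-1]$.

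The missing idea (\cref{thm:(b)=>(d)}) has two steps. First, use $F$-finiteness and $e\gg\hsl(R)$ to fix one $e$ with $\fq^F/\fq=\ker(R/\fq\to F^e_*R/\fq F^e_*R)$ for all $\fq$. Second, prove that $C=F^e_*R/R$ is a Buchsbaum module. This works because $\ell(C/\fq C)-e(\fq,C)$ is expressed through $\ell(\fq^F/\fq)$, which is constant by \cref{lem: qs/q constant length implies qF/q constant length}, together with the corresponding invariants of $R$ and $F^e_*R$. Then $\fq^F/\fq$ is a quotient of $H_1(\ux;C)$, hence killed by $\fm$, and with \cref{lem: buchs implies m q lim in q} this gives (b).

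For (c), weak $F$-nilpotence turns the triangle $\R\Gamma_\fm C[-1]\to\R\Gamma_\fm R\to\R\Gamma_\fm F^e_*R$ into a map $\tau^{\le d}\R\Gamma_\fm C[-1]\to\tau^{<d,F}\R\Gamma_\fm R$ that is surjective on cohomology. Precompose it with $\R\Hom_A(k,C)[-1]\to\R\Gamma_\fm C[-1]$, where $A$ is regular surjecting onto $R$. The source then genuinely lies in $D(k)$, and this map is surjective on cohomology because $C$ is Buchsbaum. The result is a map from an object of $D(k)$ that is surjective on cohomology, which is exactly the criterion of \cite[II, Proposition 4.3]{SV86}.

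Your (c) $\Rightarrow$ (b) sketch is in the right spirit but omits the identification that does the work:
\[
\fq^F/\fq=\ker(R/\fq\to\rperf/\fq\rperf)\cong h^0(K_\bullet(\ux;R)\otimes\tau^{<d,F}\R\Gamma_\fm R).
\]
This needs $\ux$ to be regular on $\rperf$ (\cref{thm:wfn-rperf}) and \cref{lem:wfn-BCM-qis}. Your Cohen--Macaulay step (d) $\Rightarrow$ (b) can be completed, since $\fQ\subseteq\operatorname{Ann}_R 0^F_{H^d_\fm(R)}$ forces $0^F_{H^d_\fm(R)}\cong\fQ^F/\fQ$. However, connecting (d) with (a) still goes through the broken steps above.
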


The first claim is proven in \cref{thm:(b)=>(d)} and the second claim is proven in \cref{thm:(d)=>(c),thm:(c)=>(b)WNil}. The last claim is proven in \cref{cor:gor}.

\subsection{Properties of \texorpdfstring{$F$}{F}-Buchsbaum rings}

\begin{thm}[cf. {\cite[Theorem 5.1]{MQ22}}]\label{thm:(b)=>(d)} Let $(R,\fm,k)$ be an $F$-finite local ring of dimension $d > 0$. If $R$ is $F$-Buchsbaum, then:
\begin{enumerate}[label=(\roman*)]    
    \item $\fm\qs\subseteq\fq$ for all parameter ideals $\fq$, and \label{thm:(b)=>(d)-conclusion-2}
    \item if we further assume that $R$ is weakly $F$-nilpotent, then the $F$-truncation $\tau^{<d,F}\mathbf{R}\Gamma_\fm(R)$ is quasi-isomorphic to a complex of $k$-vector spaces.\label{thm:(b)=>(d)-conclusion-3}
\end{enumerate}
\end{thm}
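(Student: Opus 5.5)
For (i), fix a parameter ideal $\fq=(x_1,\dots,x_d)$. By \cref{thm:(a)<=>(b)} and \cref{lem: qs/q constant length implies qF/q constant length}, $R$ is Buchsbaum, $\qs=\qlf$ for every $\fq$, and $\ell_R(\qs/\fq)$ is a constant $D$ independent of $\fq$. The plan is to copy the standard argument that length-constancy forces $\fm$ to kill the quotient, as in \cite[Theorem 5.1]{MQ22}. Take $z\in\qs$ and $a\in\fm$; the goal is $az\in\fq$. Since $\fm\qlim\subseteq\fq$ by \cref{lem: buchs implies m q lim in q}, only the elements of $\fq^F$ need handling. First reduce to $a$ a parameter element extending part of the system: $\fm$ is generated by elements $a$ such that $x_1,\dots,x_{d-1},a$ (after reordering) is a system of parameters.

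Next compare $\fq$ with $\fq'=(x_1,\dots,x_{d-1},ax_d)$, which is again a parameter ideal contained in $\fq$. Constancy gives $\ell_R((\fq')^{F\textrm{-lim}}/\fq')=\ell_R(\qs/\fq)$. Multiplication by $a$ on the last generator gives a map $R/\fq\to R/\fq'$. Colon capturing for $(-^{\lim})^F$ (\cref{thm: colon capturing for qlf}), together with $\qs=\qlf$, should show that this map carries $\qs/\fq$ into $(\fq')^{F\textrm{-lim}}/\fq'$ and that $(\fq')^{F\textrm{-lim}}=\fq'+a\qs$, or at least that there is an exact sequence relating the two.

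A length count then does the work. We have $\ell_R(R/\fq')=\ell_R(R/\fq)+\ell_R(R/(\fq_{\le d-1}:a \text{ stuff}))$, and $e(\fq')=2e(\fq)$ whenever $a$ is itself a power-compatible parameter; the cleanest route is to use $a=x_d$ first and then general $a$. Equality of the lengths $D$ then forces $a\qs\subseteq\fq$. Concretely, when $a=x_d$, the comparison between $\fq$ and $\fq_{(x_d^2)}$ via $\cdot x_d$ should give $x_d\qs\subseteq\fq$, and the same holds for each $x_i$. Replacing $\fq$ by other systems containing a given $a\in\fm$ and using that $\qs$ depends only on $\fq$ extends this to all of $\fm$. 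I expect this comparison to be the main obstacle. Here $F$-finiteness should enter, as in \cite{MQ22}, through the fact that Frobenius closure of parameter ideals is compatible with the direct limit and $0^F_{H^d_\fm(R)}$ has finite length (\cref{thm: CMFI-pun}), so that \cref{lem: computing 0^F via qs} identifies $\qs/\qlim\cong 0^F_{H^d_\fm(R)}$ for every $\fq$. I would try first to prove $\fm\cdot 0^F_{H^d_\fm(R)}=0$ directly from this uniform isomorphism, combined with $\fm\qlim\subseteq\fq$.

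For (ii), weak $F$-nilpotence makes $\qs=\fq^F$ by \cref{prop:wfn-q-lim-F}, and $h^i(\tau^{<d,F}\R\Gamma_\fm R)=H^i_\fm(R)$ for $i<d$ and $0^F_{H^d_\fm(R)}$ at degree $d$. All of these are killed by $\fm$: the lower ones by Buchsbaumness, and the top one by (i) via \cref{lem: computing 0^F via qs}. Killing cohomology is not enough, though. Following Schenzel's method, I would represent $\tau^{<d,F}\R\Gamma_\fm R$ by the subcomplex of the Čech complex and show the natural map $\R\Hom(k,\,\cdot\,)\to$ cohomology is surjective. Equivalently, I would show that the maps $\operatorname{Ext}^i_R(k,R)\to H^i_\fm(R)$ and the analogous map into $0^F_{H^d_\fm(R)}$ are surjective. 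Using \cref{thm:wfn-rperf}, $\R\Gamma_\fm(\rperf)$ is concentrated in degree $d$. The cone of $R\to\rperf$ then identifies $\tau^{<d,F}\R\Gamma_\fm R$ with a shift of the fiber of $\R\Gamma_\fm R\to\R\Gamma_\fm\rperf$. Surjectivity can then be checked on parameter ideals via (i), as in the proof of \cite[Theorem 5.1]{MQ22}.
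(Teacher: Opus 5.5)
\textbf{Part (i).} There is a genuine gap here. The only nontrivial content is $a\fq^F\subseteq\fq$ for $a\in\fm\smallsetminus\fq$, and your plan does not reach it.
\begin{itemize}
\item The case $a=x_d$ is vacuous, since $x_d\qs\subseteq x_dR\subseteq\fq$.
\item Passing to a system $\fq'$ that contains $a$ only gives the trivial inclusion $a(\fq')^{F\textrm{-lim}}\subseteq\fq'$. This says nothing about $a\qs$ modulo $\fq$.
\item The identity $(\fq')^{F\textrm{-lim}}=\fq'+a\qs$, with $\fq'=(\fb,ax_d)$ and $\fb=(x_1,\dots,x_{d-1})$, fails whenever $R$ is not Cohen--Macaulay. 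Because $R$ is Buchsbaum, $\fb:a=\fb:x_d=\fb:\fm$ and $(\fb:\fm)\cap(\fb+aR)=\fb$. Hence $((\fb:\fm)+\fq')/\fq'\cong(\fb:\fm)/\fb\cong H^0_\fm(R/\fb)\neq0$. This module lies in $(\fq')^{\lim}/\fq'$ and meets $(a\qs+\fq')/\fq'$ trivially. Meanwhile a copy of the same module is the kernel of $\cdot a$ on $\qs/\fq$.
\end{itemize}
Even symmetrized against $x_d(\fb,a)^{F\textrm{-lim}}$, the length count by itself yields only $a\qs\subseteq(\fb:\fm)+\fq\subseteq\qlim$, that is, $\fm\cdot\zdF=0$. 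Your fallback gives the same thing. That is not enough. The module $\fq^F/\fq$ is an extension of $\qs/\qlim$ by $(\fq^F\cap\qlim)/\fq$, and $\fm$ killing both ends only gives $\fm^2\fq^F\subseteq\fq$. This is exactly the extension problem you flag in (ii). Removing the $(\fb:\fm)$-component would need a further structural argument that your proposal does not contain.

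Also, the facts you attribute to $F$-finiteness (finite length of $\zdF$, and $\qs/\qlim\cong\zdF$) hold without it.

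The missing idea is to exhibit $\fq^F/\fq$ as a quotient of Koszul homology of a \emph{finitely generated Buchsbaum module}:
\begin{itemize}
\item By \cite{HKSY06}, fix one $e\gg0$ with $\fq^F/\fq=\ker(R/\fq\to F^e_*(R/\fq^{[p^e]}))$ for all $\fq$ simultaneously.
\item $F$-finiteness makes $C=F^e_*R/R$ finitely generated.
\item The exact sequence $0\to\fq^F/\fq\to R/\fq\to F^e_*(R/\fq^{[p^e]})\to C/\fq C\to0$, together with constancy of $\ell_R(\fq^F/\fq)$, makes $\ell_R(C/\fq C)-e(\fq,C)$ constant. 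So $C$ is Buchsbaum.
\item The Koszul long exact sequence gives $H_1(\ux;C)\twoheadrightarrow\fq^F/\fq$, and $\fm H_1(\ux;C)=0$.
\end{itemize}

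\textbf{Part (ii).} Your sketch gives no mechanism for the needed surjectivity.
\begin{itemize}
\item Surjectivity of $\operatorname{Ext}^i_R(k,R)\to H^i_\fm(R)$ for $i<d$ is just Schenzel's criterion for $R$ being Buchsbaum, which is already known. It cannot see how $\zdF$ is glued to $H^{d-1}_\fm(R)$ inside $\tau^{<d,F}\R\Gamma_\fm R$.
\item $\rperf/R$ is not finitely generated, so no Buchsbaum-module surjectivity theorem applies to it.
\item ``Checking on parameter ideals via (i)'' would require a Schenzel-type criterion for arbitrary complexes, which you do not supply.
\end{itemize}
The paper reuses the same $C$:
\begin{itemize}
\item Start from the triangle $\R\Gamma_\fm C[-1]\to\R\Gamma_\fm R\to\R\Gamma_\fm F^e_*R\xrightarrow{+1}$.
\item Weak $F$-nilpotence and $e\geq\hsl(R)$ make $H^i_\fm(R)\to H^i_\fm(F^e_*R)$ zero for $i<d$, with kernel $\zdF$ for $i=d$.
\item This yields a map $\tau^{\leq d}(\R\Gamma_\fm C[-1])\to\tau^{<d,F}\R\Gamma_\fm R$ that is surjective on cohomology.
\item Precompose with $K^\bullet(\fm,C)[-1]\to\R\Gamma_\fm C[-1]$. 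This is surjective on cohomology by \cite[Theorem 3.4]{Tru86}, because $C$ is Buchsbaum (after writing $R$ as a quotient of a regular ring via Gabber).
\item Conclude with \cite[II, Proposition 4.3]{SV86}.
\end{itemize}
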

\begin{proof}
 By \Cref{thm: CMFI-pun}, $R_\fp$ is Cohen--Macaulay and $F$-injective for all $\fp\in\Spec^\circ(R)$. In particular, $R_\fp$ is regular for all minimal primes $\fp$, hence $R$ is reduced and equidimensional; note that we are using here that $F$-Buchsbaum rings are unmixed by assumption. Note that $R$ is Buchsbaum by \cref{thm:(a)<=>(b)}. By \cite{HKSY06} we may fix an integer $e\gg \hsl(R)$ so that 
 \begin{align}
 \fq^F/\fq = \ker(R/\fq \to F^e_*R/\fq F^e_*R\cong F^e_*(R/\fq^{[p^e]}))\label{eq:(b)=>(d)-kernel}
 \end{align}
 for all parameter ideals $\fq\subseteq R$. For this choice of $e$ and the exact sequence
 \begin{align}
 0\to R\to F^e_* R\to C\to 0\label{eq:(b)=>(d)-2}
 \end{align}
we claim that $F^e_* R$ and $C=F^e_* R/R$ are Buchsbaum $R$-modules. Indeed, the quantity
\begin{align*}
    \ell_R (F^e_*R/\fq F^e_* R) - e(\fq,F^e_*R) = \left[k^{1/p^e}:k\right]\left(\ell_R\left(R/\fq^{[p^e]}\right) - e\left(\fq^{[p^e]}\right)\right)
\end{align*}
is independent of parameter ideal $\fq$ so that $F^e_*R$ is Buchsbaum. If we tensor \eqref{eq:(b)=>(d)-2} with $(-)\otimes_R R/\fq$ we obtain via \eqref{eq:(b)=>(d)-kernel} the exact sequence
 \begin{align}
 0\to \fq^F/\fq\to R/\fq\to F^e_*(R/\fq^{[p^e]})\to C/\fq C\to 0.\label{eq:(b)=>(d)-3}
 \end{align} To see that $C$ is Buchsbaum, from \eqref{eq:(b)=>(d)-2} and \eqref{eq:(b)=>(d)-3} we find that
 \begin{eqnarray}
\ell_R(C/\fq C) - e(\fq, C) & = & \ell_R(C/\fq C) - e(\fq,F_*^e R) + e(\fq)\nonumber\\ 
 & = & \ell_R(F_*^e R / \fq F_*^e R) - \ell_R(R/\fq) + \ell_R(\fq^F/\fq) - e(\fq,F_*^e R) + e(\fq)\nonumber\\ 
 & = & \ell_R(\fq^F/\fq) - (\ell_R(R/\fq) - e(\fq)) + (\ell_R(F_*^e R / \fq F_*^e R) - e(\fq, F_*^e R))\label{eq:(b)=>(d)-7}
\end{eqnarray}
is independent of $\fq$ as desired, applying \cref{lem: qs/q constant length implies qF/q constant length} to \eqref{eq:(b)=>(d)-7}. Letting $\fq = (x_1,\dots, x_d)$ and examining the long exact sequence on Koszul homology induced by \eqref{eq:(b)=>(d)-2} gives a surjection $H_1(\ux; C)\twoheadrightarrow \fq^F /\fq$. Since $C$ is Buchsbaum, we observe that $H_1(\ux; C)$ (and hence $\fq^F /\fq$) is annihilated by $\fm$. In light of \cref{lem: buchs implies m q lim in q}, this completes \ref{thm:(b)=>(d)-conclusion-2}.

From \eqref{eq:(b)=>(d)-2} we have an exact triangle
\begin{align*}
 \R\Gamma_\fm C[-1] \to \R\Gamma_\fm R \to \R\Gamma_\fm F_*^e R \xra{+1}.
 \end{align*}
Since we are assuming that $R$ is weakly $F$-nilpotent and since we picked $e \gg 0$ larger than the HSL-number of $R$, we have exact sequences
\begin{align}
0\to H^{i-1}_\fm(F^e_* R)\to h^{i}(\R\Gamma_\fm C[-1]) \to H_\fm^{i}(R)\to 0\label{eq:(b)=>(d)-4}
\end{align} for all $i < d$ and 
\begin{align}
    0\to H_\fm^{d-1}(F_*^eR) \to h^d(\R\Gamma_\fm C[-1]) \to H^d_\fm(R)\to H^d_\fm(F^e_* R).\label{eq:(b)=>(d)-8}
\end{align}
By our choice of $e\gg 0$, note that $\ker( H_\fm^d(R) \to H_\fm^d(F_*^e R)) = 0^F_{H^d_\fm(R)}$ so that \eqref{eq:(b)=>(d)-8} induces
\begin{align}
0\to H_\fm^{d-1}(F_*^eR) \to h^d(\R\Gamma_\fm C[-1]) \to 0^F_{H^d_\fm(R)} \to 0.\label{eq:(b)=>(d)-5}
\end{align}
Note that $\tau^{\leq d}(\R\Gamma_\fm C[-1])$ lives in cohomological degree $[0,d]$ while $H^d_\fm(R)/0^F_{H^d_\fm(R)}[-d]$ lives strictly in cohomological degree $d$. By the above, we also see that $h^d(\R\Gamma_\fm C[-1]) \to H^d_\fm(R)\to H^d_\fm(R)/0^F_{H^d_\fm(R)}$ is the zero map. Assembling everything, we therefore have
\begin{equation}
    \begin{tikzcd}
        \tau^{<d,F}\R\Gamma_\fm R\arrow[r]& \R\Gamma_\fm R\arrow[r,"\cong"]&\tau^{\leq d}\R\Gamma_\fm R\arrow[r]& H^d_\fm(R)/0^F_{H^d_\fm(R)}[-d]\arrow[r,"+1"]&{}\\
        & & \tau^{\leq d}(\R\Gamma_\fm C[-1])\arrow[u]\arrow[ur,"0"]& &
    \end{tikzcd}\label{eq:(b)=>(d)-6}
\end{equation}
This constitutes the data of a map $\tau^{\leq d} \R\Gamma_\fm C[-1] \to \tau^{<d,F}\R\Gamma_\fm(R)$ with induced surjections
\begin{align*}
    h^i(\tau^{\leq d}(\R\Gamma_\fm C[-1]))\cong h^i(\R\Gamma_\fm C[-1])\twoheadrightarrow H^i_\fm(R)&\hspace{1cm}\text{by \eqref{eq:(b)=>(d)-4}, and }\\
    h^d(\tau^{\leq d}(\R\Gamma_\fm C[-1]))\cong h^d(\R\Gamma_\fm C[-1])\twoheadrightarrow 0^F_{H^d_\fm(R)}&\hspace{1cm}\text{by \eqref{eq:(b)=>(d)-5},}
\end{align*}
for all $i<d$. Our final goal is to precompose this map with a map from an object of $D(k)$ likewise inducing surjections on cohomology. By Gabber's observation \cite[Remark 13.6]{Gab04} we may write $R\cong A/I$ where $A$ is a regular local ring of characteristic $p>0$. Then we have a quasi-isomorphism  $K^\bullet(\fm,C)\qis \R \Hom_A(k,C)$ where $K^\bullet(\fm,C)$ is the (cohomological) Koszul complex on the generators of $\fm$. Moreover, we see that the map
$K^\bullet(\fm,C)[-1]\to \R\Gamma_\fm C[-1]$ induces surjective maps on cohomology by \cite[Theorem 3.4]{Tru86}. Composing this map with the one provided by \eqref{eq:(b)=>(d)-6} gives 
$$K^\bullet(\fm,C)[-1]\to \tau^{<d,F}\R\Gamma_\fm(R)$$ inducing surjections $h^i(K^\bullet(\fm,C)[-1])\twoheadrightarrow h^i(\tau^{<d,F}\R\Gamma_\fm(R))$. It follows by \cite[II, Proposition 4.3]{SV86} that $\tau^{<d,F}\R\Gamma_\fm(R)\in D(k)$ as desired.
\end{proof}

\begin{rmk}
    It would be natural to employ the $\Gamma$-construction of \cite{HH94} to remove the $F$-finiteness assumption from \cref{thm:(b)=>(d)}. We briefly summarize why we have not done so. In the setting of \cref{thm:(b)=>(d)}, the completion $\widehat{R}$ will retain all relevant assumptions about $R$. We may choose a $p$-basis $\Lambda$ of $k$ and a sufficiently small cofinite subset $\Gamma\ll \Lambda$ so that $\widehat{R}^\Gamma_\fp$ is Cohen--Macaulay and $F$-injective for all $\fp\in\Spec^\circ(\widehat{R}^\Gamma)$. By the proof of \cite[Lemma 2.9(a)]{EH08} we can also shrink $\Gamma$ so that $0^F_{H^i_\fm(R)}\otimes_R R^\Gamma\cong 0^F_{H^i_{\fn}(R^\Gamma)}$ for all $i\leq d$. This will allow us to show using the methods of \cite{HQ23}, as in the proof of \cite[Lemma 4.4]{Quy18} in the tight closure setting, that $(\fq^{[q]})^F \widehat{R}^\Gamma= (\fq^{[q]}\widehat{R}^\Gamma)^F$ for all parameter ideals $\fq\subseteq \widehat{R}$ and for all $q=p^e\gg 0$. However, we do not see how to conclude that $\fq^F\widehat{R}^\Gamma = (\fq \widehat{R}^\Gamma)^F$, as the analogous step for tight closure uses colon capturing.
\end{rmk}

\subsection{Interactions between the \texorpdfstring{$F$}{F}-Buchsbaum and weakly \texorpdfstring{$F$}{F}-nilpotent conditions} Recall by \Cref{prop:wfn-q-lim-F} that parameter ideals $\fq\subseteq R$ in a weakly $F$-nilpotent Buchsbaum local ring $R$ enjoy the equality $$\qs = (\fq^{\lim})^F = \fq^F.$$ We need a reformulation of the $F$-truncation $\tau^{< d,F} \mathbf{R} \Gamma_{\fm} R$ in terms of $\rperf$.

\begin{lem}[cf. {\cite[Lemma 2.9]{MQ22}}]\label{lem:wfn-BCM-qis}For $(R,\fm)$ an excellent, reduced, equidimensional, weakly $F$-nilpotent local ring of dimension $d$ and of prime characteristic $p>0$, $$\tau^{\leq d}( \mathbf{R} \Gamma_{\fm}(R_\perf/R)[-1])\cong\tau^{< d,F} \mathbf{R} \Gamma_{\fm} R$$ in $D(R)$.
\end{lem}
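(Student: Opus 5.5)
We mirror the proof of \cite[Lemma 2.9]{MQ22}, starting from the short exact sequence $0\to R\to \rperf\to \rperf/R\to 0$. Since $R$ is reduced, $R\to\rperf$ is injective. Applying $\R\Gamma_\fm$ gives an exact triangle
\[
\R\Gamma_\fm(\rperf/R)[-1]\to \R\Gamma_\fm R\to \R\Gamma_\fm \rperf\xrightarrow{+1}.
\]
By \cref{thm:wfn-rperf}, weak $F$-nilpotence gives $H^i_\fm(\rperf)=0$ for all $i<d$. Moreover $H^d_\fm(\rperf)=\varinjlim_e H^d_\fm(F^e_*R)$, with transition maps induced by Frobenius. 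Hence
\[
\ker\bigl(H^d_\fm(R)\to H^d_\fm(\rperf)\bigr)=\bigcup_e\ker\bigl(F^e\colon H^d_\fm(R)\to H^d_\fm(R)\bigr)=0^F_{H^d_\fm(R)}.
\]
So the image of $H^d_\fm(R)$ in $H^d_\fm(\rperf)$ is $H^d_\fm(R)/0^F_{H^d_\fm(R)}$.

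Next we read off the long exact sequence in cohomology. For $i<d$ it gives $h^i(\R\Gamma_\fm(\rperf/R)[-1])\cong H^i_\fm(R)$, because both neighbouring terms $H^{i-1}_\fm(\rperf)$ and $H^i_\fm(\rperf)$ vanish. In degree $d$ it gives $0\to H^{d-1}_\fm(\rperf)=0\to h^d(\R\Gamma_\fm(\rperf/R)[-1])\to H^d_\fm(R)\to H^d_\fm(\rperf)$. Therefore $h^d(\R\Gamma_\fm(\rperf/R)[-1])\cong 0^F_{H^d_\fm(R)}$. Thus $\tau^{\le d}(\R\Gamma_\fm(\rperf/R)[-1])$ has the same cohomology as $\tau^{<d,F}\R\Gamma_\fm R$.

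It remains to produce an actual morphism in $D(R)$ inducing these isomorphisms. Let $\alpha\colon T:=\tau^{\le d}(\R\Gamma_\fm(\rperf/R)[-1])\to \R\Gamma_\fm R$ be the composite of the truncation map with the first arrow of the triangle. Compose $\alpha$ with $\R\Gamma_\fm R\to H^d_\fm(R)/0^F_{H^d_\fm(R)}[-d]$. This composite vanishes, as follows:
\begin{enumerate}[label=(\roman*)]
\item $T$ lives in degrees $\le d$ and the target is concentrated in degree $d$, so $\Hom_{D(R)}(T, M[-d])=\Hom_R(h^d(T),M)$ for any module $M$.
\item On $h^d$, the composite $h^d(T)\to H^d_\fm(R)\to H^d_\fm(R)/0^F_{H^d_\fm(R)}$ is zero, since the image of $h^d(T)$ is exactly $0^F_{H^d_\fm(R)}$.
\end{enumerate}
By the defining triangle of $\tau^{<d,F}$, the map $\alpha$ therefore lifts to $\beta\colon T\to\tau^{<d,F}\R\Gamma_\fm R$.

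The main obstacle is checking that $\beta$ is a quasi-isomorphism, since a lift is only unique up to a connecting map. For $i<d$ this is immediate: $h^i(\tau^{<d,F}\R\Gamma_\fm R)\to H^i_\fm(R)$ is an isomorphism, and $h^i(\alpha)$ is also an isomorphism. In degree $d$, the map $h^d(\tau^{<d,F}\R\Gamma_\fm R)=0^F_{H^d_\fm(R)}\to H^d_\fm(R)$ is injective. Also $h^{d-1}$ of $H^d_\fm(R)/0^F_{H^d_\fm(R)}[-d]$ vanishes, so the ambiguity does not affect $h^d$. Since $h^d(\alpha)$ is injective with image $0^F_{H^d_\fm(R)}$, the map $h^d(\beta)$ is an isomorphism. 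Finally, both objects have no cohomology outside $[0,d]$, so $\beta$ is an isomorphism in $D(R)$.
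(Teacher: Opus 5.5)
Your proposal is correct and takes the same approach as the paper. Both arguments use the triangle from $0\to R\to \rperf\to \rperf/R\to 0$, the vanishing of $H^i_\fm(\rperf)$ for $i<d$ via \cref{thm:wfn-rperf}, and the identification of $h^d$ with $0^F_{H^d_\fm(R)}$; the composite to $H^d_\fm(R)/0^F_{H^d_\fm(R)}[-d]$ then vanishes for degree reasons, so the map lifts to a quasi-isomorphism. You simply spell out what the paper compresses into ``cohomological degree considerations'' (the lift is in fact unique, since every map from an object of $D^{\le d}$ to $H^d_\fm(R)/0^F_{H^d_\fm(R)}[-d-1]$ is zero).
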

\begin{proof}
The short exact sequence $$0 \to R \to \rperf \to \rperf/R \to 0$$ of $R$-modules induces the triangle
\begin{align}
    \mathbf{R}\Gamma_\fm(\rperf/R)[-1]\to \mathbf{R}\Gamma_\fm R\to\mathbf{R}\Gamma_\fm\rperf \stackrel{+1}{\to}.\label{eq:MQ2.9-1}
\end{align}
By \cref{thm:wfn-rperf} since $R$ is weakly $F$-nilpotent, we have $H^i_\fm(\rperf) = 0$ for all $i<d$. Taking cohomology we see then for $i<d$ that
$$h^i(\mathbf{R} \Gamma_{\fm} (\rperf/R)[-1]) \cong h^i(\mathbf{R} \Gamma_{\fm} R)=H^i_\fm(R).$$ For $i= d$ we have 
$$h^d(\mathbf{R} \Gamma_{\fm} (\rperf/R)[-1]) \cong \ker(H^d_\fm(R)\to H^d_\fm(\rperf))\cong 0^F_{H^d_\fm(R)}.$$  Composing with the induced map we have
\begin{align}
\tau^{\leq d} (\mathbf{R} \Gamma_{\fm} (\rperf/R)[-1]) \to \tau^{\leq d} \mathbf{R} \Gamma_{\fm} R\cong \mathbf{R} \Gamma_{\fm} R \to H_{\fm}^d(R)/0^F_{H^d_\fm(R)}[-d]\label{R-perf-truncation-SES}
\end{align}
whose $d$\ts{th} cohomology vanishes. By cohomological degree considerations, \eqref{R-perf-truncation-SES} identifies with the $0$-map in $D(R)$, hence the induced map $\tau^{\leq d} (\mathbf{R} \Gamma_{\fm} (\rperf/R)[-1])\to \tau^{<d,F}\mathbf{R}\Gamma_\fm R$ is a quasi-isomorphism as claimed.
\end{proof}

\begin{thm}\label{thm:(d)=>(c)}
Let $(R,\fm,k)$ be an ummixed, excellent, weakly $F$-nilpotent local ring of dimension $d > 0$. Further assume that $R$ has a dualizing complex. If $\tau^{<d,F} \mathbf{R}\Gamma_{\fm}(R)$ is quasi-isomorphic to a complex of $k$-vector spaces, then $\fm (\fq^{\lim})^F \subseteq \fq$ for all parameter ideals $\fq\subseteq R$. 
\end{thm}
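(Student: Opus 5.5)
The plan is to reduce the statement to showing $\fm\fq^F\subseteq\fq$ and then to read $\fq^F/\fq$ off a Koszul complex built on the $F$-truncation, which by hypothesis is a complex of $k$-vector spaces.

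\emph{Step 1: reductions.} Since $\tau^{<d,F}\R\Gamma_\fm(R)$ is quasi-isomorphic to a complex $V^\bullet$ of $k$-vector spaces, so is its ordinary truncation $\tau^{<d}V^\bullet\qis\tau^{<d}\R\Gamma_\fm(R)$. Schenzel's criterion (\cref{intro:thm-buchsbaum}) then shows that $R$ is Buchsbaum. By \cref{prop:wfn-q-lim-F}, $R$ being Buchsbaum and weakly $F$-nilpotent gives $(\fq^{\lim})^F=\fq^F$ for every parameter ideal. It therefore suffices to prove $\fm\fq^F\subseteq\fq$.

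To use \cref{lem:wfn-BCM-qis} we need $R$ reduced. I would get this by localizing the hypothesis to the punctured spectrum, arguing as in \cref{thm: CMFI-pun}: at the generic points the ring is $F$-injective and Cohen--Macaulay, hence reduced there, and unmixedness then gives reducedness. I expect the dualizing complex assumption to be used here, or in identifying $\rperf$ with a balanced big Cohen--Macaulay algebra via \cref{thm:wfn-rperf}. With reducedness in hand, \cref{lem:wfn-BCM-qis} gives
\[
\tau^{\le d}X\qis\tau^{<d,F}\R\Gamma_\fm R\qis V^\bullet, \qquad X:=\R\Gamma_\fm(\rperf/R)[-1].
\]

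\emph{Step 2: Koszul description of $\fq^F/\fq$.} Fix $\fq=(x_1,\dots,x_d)$ and let $K=K_\bullet(\ux)$ be the Koszul complex, placed in cohomological degrees $[-d,0]$. Because Frobenius on a reduced ring is injective, $\fq^F/\fq=\ker(R/\fq\to\rperf/\fq\rperf)$. Using the sequence $0\to R\to\rperf\to\rperf/R\to0$, this kernel is exactly the image of the connecting map
\[
H_1(\ux;\rperf/R)\to H_0(\ux;R)=R/\fq.
\]
For any complex $M$ we have $K\otimes M\qis K\otimes\R\Gamma_\fm M$, since $K\otimes M$ has $\fq$-torsion, hence $\fm$-torsion, cohomology. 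So the connecting map is $h^0$ of the map
\[
K\otimes X\to K\otimes\R\Gamma_\fm R
\]
coming from the triangle \eqref{eq:MQ2.9-1}.

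\emph{Step 3: the degree count, which I expect to be the crux.} The composite $\tau^{\le d}X\to X\to\R\Gamma_\fm R$ is the map realizing $\tau^{<d,F}\R\Gamma_\fm R\to\R\Gamma_\fm R$. Apply $K\otimes-$ to the triangle
\[
\tau^{\le d}X\to X\to\tau^{>d}X\xrightarrow{+1}.
\]
Since $\tau^{>d}X$ lives in degrees $\ge d+1$ and $K$ lives in degrees $\ge -d$, the complex $K\otimes\tau^{>d}X$ lives in degrees $\ge1$. Hence $h^0(K\otimes\tau^{\le d}X)\to h^0(K\otimes X)$ is surjective, and the image of $\fq^F/\fq$ inside $R/\fq$ equals the image of
\[
h^0(K\otimes V^\bullet)\to h^0(K\otimes\R\Gamma_\fm R)=R/\fq.
\]
This map is $R$-linear, and $h^0(K\otimes V^\bullet)$ is a $k$-vector space: each $x_i$ acts by zero on $V^\bullet$, so $K\otimes V^\bullet$ is a complex of $k$-vector spaces. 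Therefore its image is killed by $\fm$, which gives $\fm\fq^F\subseteq\fq$ and hence $\fm(\fq^{\lim})^F\subseteq\fq$.

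The delicate points are the truncation-degree bookkeeping in Step 3 and making sure the connecting map in Koszul homology agrees with the derived-category map, compatibly with \cref{lem:wfn-BCM-qis}. The main obstacle, though, is the reducedness hypothesis needed to invoke that lemma in Step 1.
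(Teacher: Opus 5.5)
Your proposal is correct and follows the paper's proof essentially step for step. The steps are: reducedness via \cref{thm: CMFI-pun}, where the cleanest input is that the cohomology of $V^\bullet$ consists of $k$-vector spaces sitting inside Artinian modules, hence of finite length, so condition (b) there holds; then \cref{lem:wfn-BCM-qis}; then the Koszul complex tensored with the triangle from $0\to R\to\rperf\to\rperf/R\to 0$ and the same degree count against $\tau^{>d}$; and finally \cref{prop:wfn-q-lim-F}. The only difference is that you work with images and surjectivity rather than the paper's isomorphism $h^0\big(K_\bullet(\ux,R)\otimes\tau^{\le d}(\R\Gamma_\fm(\rperf/R)[-1])\big)\cong\fq^F/\fq$, so you do not need the vanishing $H_1(\ux;\rperf)=0$ that the paper extracts from \cref{thm:wfn-rperf}.
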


\begin{proof} 
By assumption, $\ell_R(H^i_\fm(R))<\infty$ for all $i<d$, and $\ell_R(\zdF)<\infty$ so $R_\fp$ is $F$-injective for all $\fp\in \Spec(R)\smallsetminus\{\fm\}$ by \cref{thm: CMFI-pun}. In particular, $R_\fp$ is regular for all minimal primes $\fp$; hence, $R$ is reduced and equidimensional. Let $\uline{x}:=x_1,\dots, x_d$ be a system of parameters, write $\fq=(\uline{x})$ and let $K_\bullet(\uline{x},R)$ denote the (homological) Koszul complex on $\uline{x}$. The short exact sequence $$0 \to R \to \rperf \to \rperf/R \to 0$$ induces a triangle as in \eqref{eq:MQ2.9-1} which we then tensor with $K_\bullet(\uline{x},R)$ to obtain the triangle of $R$-modules below.
\begin{equation}
    \begin{tikzcd}
        K_\bullet(\uline{x},R)\otimes \mathbf{R}\Gamma_\fm(\rperf/R)[-1]\arrow[r]\arrow[d,"\qis"]& K_\bullet(\uline{x},R)\otimes\mathbf{R}\Gamma_\fm R\arrow[d,"\qis"]\arrow[r]& K_\bullet(\uline{x},R)\otimes \mathbf{R}\Gamma_\fm\rperf\arrow[d,"\qis"]\arrow[r,"+1"]& {}\\
        K_\bullet(\uline{x},\rperf/R)[-1]\arrow[r]&K_\bullet(\uline{x},R)\arrow[r]& K_\bullet(\uline{x},\rperf)\arrow[r,"+1"]& {}
    \end{tikzcd}
\end{equation}
The bottom triangle induces in cohomology the exact sequence 
\begin{align*}
    0=h^{-1} (K_{\sbt}(\ux,\rperf)) \to h^0(K_{\sbt}(\ux,\rperf/R)[-1]) \to R/\fq \to \rperf/\fq \rperf,
\end{align*}
where the leftmost zero follows from \cref{thm:wfn-rperf} using the assumption that $R$ is weakly $F$-nilpotent and hence $\underline{x}$ is a regular sequence on $\rperf$. We then see that 
\begin{equation}
    h^0(K_\bullet(\uline{x},R)\otimes \mathbf{R}\Gamma_\fm(\rperf/R)[-1])\cong h^0( K_\bullet(\uline{x},\rperf/R)[-1])\cong \ker(R/\fq\to \rperf/\fq\rperf)\cong \fq^F/\fq.\label{eq:QF-mod-Q-isomorphism}
\end{equation}
Applying $K_\bullet(\uline{x},R)\otimes(-)$ to the canonical triangle
\begin{equation}
\begin{tikzcd}
    \tau^{\leq d}(\R \Gamma_\fm(\rperf/R)[-1])\arrow[r]& \R \Gamma_\fm(\rperf/R)[-1]\arrow[r]& \tau^{>d}(\R \Gamma_\fm(\rperf/R)[-1])\arrow[r,"+1"]&{}
\end{tikzcd}\label{eq:(d)=>(c)-2}
\end{equation}
gives the triangle
{\footnotesize
\begin{align}
    K_\bullet(\uline{x},R)\otimes\left(\tau^{\leq d}(\R \Gamma_\fm(\rperf/R)[-1])\right)\to K_\bullet(\uline{x},R)\otimes\left(\R \Gamma_\fm(\rperf/R)[-1]\right)\to K_\bullet(\uline{x},R)\otimes\left(\tau^{>d}(\R \Gamma_\fm(\rperf/R)[-1])\right)\stackrel{+1}{\to}.\label{eq:(d)=>(c)-3}
\end{align}
}
Taking cohomology of \eqref{eq:(d)=>(c)-3} yields the isomorphism
\begin{align}
    h^0 \big( K_{\sbt}(\ux,R) \otimes \tau^{\leq d}(\R \Gamma_\fm(\rperf/R)[-1]) \big) &\cong h^0 \big( K_\bullet(\uline{x},R)\otimes \mathbf{R}\Gamma_\fm(\rperf/R)[-1] \big) \label{eq:(d)=>(c)-4}\\
    & \cong h^0(K_{\sbt}(\ux,\rperf/R)[-1])
\end{align}
because the third term in \eqref{eq:(d)=>(c)-2} lives in cohomology degree $>d$ while the complex $K_\bullet(\uline{x},R)$ of finite free $R$-modules lives in cohomology degrees $[-d,0]$, and hence
\begin{align*}
    h^0\left(K_\bullet(\uline{x},R)\otimes\tau^{>d}(\R \Gamma_\fm(\rperf/R)[-1])\right) = h^{-1}\left(K_\bullet(\uline{x},R)\otimes\tau^{>d}(\R \Gamma_\fm(\rperf/R)[-1])\right) = 0.
\end{align*}
Plugging the left hand side of \eqref{eq:(d)=>(c)-4} into \eqref{eq:QF-mod-Q-isomorphism} yields the isomorphism
\begin{align}
    h^0( K_{\sbt}(\ux,R) \otimes \tau^{\leq d}(\R \Gamma_\fm(\rperf/R)[-1]))\cong \fq^F/\fq.\label{eq:(d)=>(c)-5}
\end{align}
By hypothesis and \cref{lem:wfn-BCM-qis} we have that $\tau^{\leq d}(\R \Gamma_\fm(\rperf/R)[-1])$ is quasi-isomorphic to a complex of $k$-vector spaces. Therefore $h^0( K_{\sbt}(\ux,R) \otimes \tau^{\leq d}(\R \Gamma_\fm(\rperf/R)[-1]))$ is a $k$-vector space and $\fq^F/\fq$ is annihilated by $\fm$ for every parameter ideal $\fq$. Since $R$ is Buchsbaum by \cref{intro:thm-buchsbaum}, it follows from \cref{lem: buchs implies m q lim in q,prop:wfn-q-lim-F} that $\fm(\fq^{\lim})^F=\fm\qs\subseteq\fq$ finishing the proof.
\end{proof}

\begin{thm}\label{thm:(c)=>(b)WNil} If $R$ is weakly $F$-nilpotent and $\fm \qs \subseteq \fq$ for every parameter ideal $\fq$, then the quantity $\ell_R(\qs/\fq)$ does not depend on parameter ideal $\fq \subseteq R$.    
\end{thm}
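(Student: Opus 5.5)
The plan is to deduce Buchsbaumness from the hypothesis, use weak $F$-nilpotence to replace $\qs$ by $\fq^F$, and then compute $\ell_R(\fq^F/\fq)$ directly via $\rperf$. Throughout I assume the standing hypotheses of \cref{thm:mainthm-restated}: $R$ is excellent and unmixed, and where needed has a dualizing complex.

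\textbf{Step 1 (Buchsbaumness and finiteness).} Since $\fq^{\lim}\subseteq \qs$, the hypothesis gives $\fm\fq^{\lim}\subseteq \fq$ for every parameter ideal $\fq$. By \cref{lem: buchs implies m q lim in q}, $R$ is Buchsbaum, so $\ell_R(\qlim/\fq)=\sum_{i<d}\binom{d}{i}\ell_R(H^i_\fm(R))$ is independent of $\fq$ by \cref{lem: buchs imples qlim/q iso to lc}. Next, $\ell_R(\qs/\fq)\le \ell_R(\fq:\fm/\fq)$, and the latter is bounded independently of $\fq$ in a Buchsbaum ring, since socle dimensions of $R/\fq$ are bounded for generalized Cohen--Macaulay rings. \Cref{thm: CMFI-pun} then gives $\ell_R(\zdF)<\infty$. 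Since $R$ is Buchsbaum and weakly $F$-nilpotent, \cref{prop:wfn-q-lim-F} yields $\qs=\fq^F=(\qlim)^F$.

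\textbf{Step 2 (reduce to the top piece).} By \cref{lem: computing 0^F via qs}, $\qs/\qlim\hookrightarrow \zdF$. Because $\ell_R(\qs/\fq)=\ell_R(\qlim/\fq)+\ell_R(\qs/\qlim)$ and the first term is constant, it suffices to show that this injection is surjective for every $\fq$, i.e. $\ell_R(\qs/\qlim)=\ell_R(\zdF)$. Lemma~\ref{lem: computing 0^F via qs} already gives surjectivity for $\fq_n$ with $n\gg0$, so the task is to descend from $\fq_n$ to $\fq$ using $\fm\qs\subseteq\fq$.

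\textbf{Step 3 (the descent, expected main obstacle).} Take $\xi\in\zdF$ and represent it as $[z+\fq_n]$ with $z\in \fq_n^F$. I would try to show $z\in \fq_n^{\lim}+x^{n-1}R$, in which case $\xi$ is the image of an element of $\fq^F$ (by colon capturing, \cref{thm: colon capturing for qlf}) and we are done.

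My first attempt at this is a length count via $\rperf$, mirroring the proof of \cref{thm:(d)=>(c)}. Since $\ux$ is regular on $\rperf$, we get
\[
\fq^F/\fq\cong \ker(R/\fq\to\rperf/\fq\rperf)\cong h^0\big(K_\bullet(\ux,R)\otimes \tau^{\le d}(\R\Gamma_\fm(\rperf/R)[-1])\big).
\]
By \cref{lem:wfn-BCM-qis}, the truncation appearing here is $\tau^{<d,F}\R\Gamma_\fm R$. Its cohomology consists of finite length modules $H^i_\fm(R)$ for $i<d$ together with $\zdF$ in degree $d$, and a spectral sequence bounds $\ell_R(\fq^F/\fq)$ above by $\sum_{i<d}\binom{d}{i}\ell_R(H^i_\fm(R))+\ell_R(\zdF)$. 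Equality holds exactly when the spectral sequence degenerates.

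Degeneration is where the hypothesis $\fm\fq^F\subseteq\fq$ must be used. If it holds, then $\fq^F/\fq$ is a $k$-vector space, and one compares lengths for $\fq$ and $\fq^{[p^e]}$. For $\fq^{[p^e]}$ the maximum is attained by Lemma~\ref{lem: computing 0^F via qs}. One would then argue that the relevant Koszul differentials are multiplication by elements of $\fq\subseteq\fm$, which act by zero because everything in sight is killed by $\fm$. Making this last point rigorous is the step I expect to be hardest. The fallback is a Frobenius-pushforward argument: replace $\fq$ by $\fq^{[p^e]}$, show the length $\ell_R(\qs/\fq)$ is monotone under $\fq\mapsto \fq^{[p]}$ using $\fm\qs\subseteq\fq$, and conclude by comparison with its value for $\fq_n$.
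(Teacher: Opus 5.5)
\textbf{There is a genuine gap: Step 3, which is the entire content of the theorem, contains no valid argument.}

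Your Steps 1 and 2 are sound and match the paper's own reduction. The containment $\fm\qlim\subseteq\fm\qs\subseteq\fq$ makes $R$ Buchsbaum, so $\ell_R(\qlim/\fq)$ is constant and $\qs=\fq^F$. It then suffices to show that $\qs/\qlim\hookrightarrow\zdF$ (equivalently $\fq^F/\fq\to\zdF$) is onto for every $\fq$. Your colon-capturing reformulation is also correct: if $z\in\fq_n^{\lim}+x^{n-1}R$, then $\xi$ comes from $\fq^F$.

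The problems with Step 3 are as follows.
\begin{enumerate}
\item In the spectral sequence for $h^0(K_\bullet(\ux,R)\otimes T)$, with $T=\tau^{<d,F}\R\Gamma_\fm R$, multiplication by $\pm x_j$ only computes the $E_2$-page $H_j(\ux;h^j(T))$. The higher differentials are not multiplication maps; they encode the extension data of $T$. They need not vanish just because every $h^j(T)$ is a $k$-vector space.
\item The classical case shows this concretely. One has $\qlim/\fq\cong h^0(K_\bullet(\ux,R)\otimes\tau^{<d}\R\Gamma_\fm R)$, and a quasi-Buchsbaum ring that is not Buchsbaum has every $H^i_\fm(R)$, $i<d$, killed by $\fm$, yet $\ell_R(\qlim/\fq)$ is not constant.
\item Degeneration is automatic if $T\in D(k)$: then $T$ is formal and your count gives the maximal length. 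But that is the derived condition, not your hypothesis. Knowing that the abutment $\fq^F/\fq$ is killed by $\fm$ does not force degeneration.
\item Comparing with $\fq^{[p^e]}$ or $\fq_n$ only reproduces the upper bound. The monotonicity $\ell_R(\qs/\fq)\ge\ell_R((\fq^{[p]})^{F\textrm{-lim}}/\fq^{[p]})$ in your fallback is equivalent to the theorem, and you give no mechanism for it.
\item Using the hypothesis only at $\fq_n$ gives merely $z\in\fq_n:\fm$, which is insufficient. Take $R=k\llbracket s,t,u,v\rrbracket/((s,t)\cap(u,v))$, $\fq=(s+u,t+v)$, $n\ge 2$ and $x=(s+u)(t+v)$. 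The element $(st)^{n-1}$ lies in $\fq_n:\fm$. Its image in $H^2_\fm(R)\cong H^2_\fm(k\llbracket s,t\rrbracket)\oplus H^2_\fm(k\llbracket u,v\rrbracket)$ is not in the diagonal line that is the image of $R/\fq$. Hence $(st)^{n-1}\notin\fq_n^{\lim}+x^{n-1}R$.
\end{enumerate}

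The missing idea is to apply the hypothesis to the intermediate parameter ideals $(\qi,x_{i+1}^n,\dots,x_d^n)$ and descend one parameter at a time. The paper interpolates between the two ends using the relative Frobenius closures
\[
0^\rho_{H^{d-i}_\fm(R/\qi)}\cong\varinjlim_n(\qi,x_{i+1}^n,\dots,x_d^n)^F/(\qi,x_{i+1}^n,\dots,x_d^n),
\]
which equal $\fq^F/\fq$ for $i=d$ and $\zdF$ for $i=0$. The hypothesis applied to these ideals shows that $x_{i+1}$ kills $0^\rho_{H^{d-i}_\fm(R/\qi)}$. So every element is $\delta_0(\xi')$, where $\delta_0\colon H^{d-i-1}_\fm(R/\qin)\to H^{d-i}_\fm(R/\qi)$ is the connecting map induced by multiplication by $x_{i+1}$.

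The crux is choosing $\xi'$ inside $0^\rho_{H^{d-i-1}_\fm(R/\qin)}$. Comparing with the same sequence for $\qi^{[p^e]}$ shows that $\rho^e(\xi')$ comes from $H^{d-i-1}_\fm(R/\qi^{[p^e]})$. These lower local cohomology modules are nilpotent under relative Frobenius because $R$ is Buchsbaum and weakly $F$-nilpotent \cite[Theorem 4.2]{PQ19}. This yields surjections $0^\rho_{H^{d-i-1}_\fm(R/\qin)}\twoheadrightarrow 0^\rho_{H^{d-i}_\fm(R/\qi)}$, whose composite is $\fq^F/\fq\twoheadrightarrow\zdF$.

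Weak $F$-nilpotence is used essentially at exactly this point. In your plan it enters only through $\qs=\fq^F$ and the regularity of $\ux$ on $\rperf$, which does not control the higher differentials.
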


\begin{proof}

Since $\fm\fq^{\lim}\subseteq \fm \qs \subseteq \fq$ for all parameter ideals $\fq$, it follows that $R$ is Buchsbaum by \cref{lem: buchs implies m q lim in q}. Hence, all parameter ideals are standard, i.e., $\ell_R(\qlim/\fq)$ is constant. To prove the theorem, it suffices to show that $\qs/\fq^{\lim} \to 0^F_{H^d_\fm(R)}$ is surjective for all parameter ideals $\fq$. Indeed, first observe that $$\ell_R(\qs/\fq) = \ell_R(\qlim/\fq) + \ell_R(\qs/\qlim)$$ so it is enough to show that $\ell_R(\qs/\qlim)$ is constant. In light of \Cref{lem: computing 0^F via qs}, if $\qs/\fq^{\lim} \to 0^F_{H^d_\fm(R)}$ is surjective, then it will follow that $\ell_R(\qs/\qlim) = \ell_R(0^F_{H^d_\fm(R)})$. 

Note further the surjection $\fq^F/\fq \twoheadrightarrow \qs/\fq^{\lim} \cong \fq^F/(\fq^F \cap \fq^{\lim})$ and that the map $\beta:\fq^F/\fq \rightarrow 0^F_{H^d_\fm(R)}$ factors through $\qs/\fq^{\lim}.$ Thus, it suffices to show that $\beta$ is surjective.

Now fix  $\fq = (x_1,\ldots,x_d)$ and denote $\qi = (x_1,\ldots,x_i)$. By \cite[Lemma 5.5]{PQ19} 
\begin{equation}\label{eq:0rho}
    0^\rho_{H_\fm^{d-i}(R/\qi)} \cong \varinjlim_n \frac{(\qi,x_{i+1}^n,\ldots,x_d^n)^F}{(\qi,x_{i+1}^n,\ldots,x_d^n)}.
\end{equation} It is clear by definition when $i = d$, $0^\rho_{H_\fm^{d-i}(R/\qi)} = 0^\rho_{H_\fm^{0}(R/\fq)} \cong \fq^F/\fq$ and when $i = 0$ we have $0^\rho_{H_\fm^{d}(R)} = 0^F_{H^d_\fm(R)}$. As $(\qi,x_{i+1}^n,\ldots,x_d^n)$ is a parameter ideal for all $n$, we know by assumption that $x_{i+1}0^\rho_{H_\fm^{d-i}(R/\qi)} = 0$ for all $i$. We will check that this induces surjections $0^\rho_{H_\fm^{d-i-1}(R/\qin)} \twoheadrightarrow 0^\rho_{H_\fm^{d-i}(R/\qi)}$ for all $i$. 

For this, \cite[Lemma 4.2]{MQ22} provides an exact sequence 
$$\cdots \to H_\fm^{d-i-1}(R/\qi) \xra{\alpha} 0^{*_\rho}_{H_\fm^{d-i-1}(R/\qin)} \xra{\delta} 0^{*_\rho}_{H_\fm^{d-i}(R/\qi)} \to x_{i+1}0^{*_\rho}_{H_\fm^{d-i}(R/\qi)} \to 0.$$ 
This comes from analyzing the usual short exact sequence induced by multiplication-by-$x_{i+1}$  $$0 \to R/(\qi:x_{i+1}) \to R/\qi \to R/\qin \to 0$$ using the isomorphisms $H_\fm^{d-i}(R/(\qi \colon x_{i+1})) \cong H_\fm^{d-i}(R/\qi)$, so the map $\alpha$ is induced by $R/\qi \to R/\qin$.

By definition, $\delta\left(0^\rho_{H_\fm^{d-i-1}(R/\qin)}\right) \subseteq 0^\rho_{H_\fm^{d-i}(R/\qi)}$. This gives the following diagram whose bottom row is exact.

\begin{center}
\begin{tikzcd}
0^{\rho}_{H_\fm^{d-i-1}(R/\qin)} \arrow{r}{\delta} \arrow[d,hookrightarrow] & 0^{\rho}_{H_\fm^{d-i}(R/\qi)} \arrow[d,hookrightarrow] \arrow{r}{\cdot x_{i+1}} & x_{i+1}0^{\rho}_{H_\fm^{d-i}(R/\qi)} \arrow[d,hookrightarrow]\arrow[r,"="]& 0 \\
0^{*_\rho}_{H_\fm^{d-i-1}(R/\qin)} \arrow{r}{\delta}  & 0^{*_\rho}_{H_\fm^{d-i}(R/\qi)} \arrow{r}{\cdot x_{i+1}} & x_{i+1}0^{*_\rho}_{H_\fm^{d-i}(R/\qi)} \arrow{r} & 0
\end{tikzcd}
\end{center} Note that $x_{i+1}0^{\rho}_{H_\fm^{d-i}(R/\qi)} = 0$ by hypothesis and \eqref{eq:0rho}. Any element in $0^{\rho}_{H_\fm^{d-i}(R/\qi)}$ lifts along $\delta$ to an element $\eta$ in $0^{*_\rho}_{H_\fm^{d-i-1}(R/\qin)}$ and it suffices to show that $\eta$ comes from the a priori smaller module $0^{\rho}_{H_\fm^{d-i-1}(R/\qin)}$ in order to show that $\fq^F/\fq \to 0^F_{H^d_\fm(R)}$ is surjective. 

For each $e \in \mathbb{N}$ consider the commutative diagram 
\begin{center}
        \begin{tikzcd}
            0 \arrow{r} & R/\fq_{\leq i}:_R x_{i+1} \arrow{r}{\cdot x_{i+1}} \arrow{d}{\rho^e} & R/\fq_{\leq i} \arrow{r}\arrow{d}{\rho^e} & R/\fq_{\leq i+1} \arrow{r}\arrow{d}{\rho^e} & 0 \\
            0 \arrow{r} & R/\fq_{\leq i}^{\fbp{p^e}}:_R x_{i+1}^{p^e} \arrow{r}{\cdot x_{i+1}^{p^e}} & R/\fq_{\leq i}^{\fbp{p^e}} \arrow{r} & R/\fq_{\leq i+1}^{\fbp{p^e}} \arrow{r} & 0 
        \end{tikzcd}
    \end{center} where the vertical maps are $p^e$-linear and the horizontal maps are $R$-linear. For each $i$ we have the following induced diagram of local cohomology modules.
   {\small
    \begin{center}
        \begin{tikzcd}
            0 \arrow{r} & H^{d-i-1}_\fm(R/\fq_{\leq i}) \arrow{r}{\alpha} \arrow{d}{\rho^e} & H^{d-i-1}_\fm(R/\fq_{\leq i+1}) \arrow{r}{\delta_0}\arrow{d}{\rho^e} & H^{d-i}_\fm(R/\fq_{\leq i}) \arrow{r}{\cdot x_{i+1}} \arrow{d}{\rho^e} & H^{d-i}_\fm(R/\fq_{\leq i}) \arrow{r}\arrow{d}{\rho^e}  & 0 \\
            0 \arrow{r} & H^{d-i-1}_\fm(R/\fq_{\leq i}^{\fbp{p^e}}) \arrow{r}{\alpha_e}  & H^{d-i-1}_\fm\left(R/\fq_{\leq i+1}^{\fbp{p^e}}\right) \arrow{r}{\delta_e} & H^{d-i}_\fm\left(R/\fq_{\leq i}^{\fbp{p^e}}\right) \arrow{r}{\cdot x_{i+1}^{p^e}} & H^{d-i}_\fm\left(R/\fq_{\leq i}^{\fbp{p^e}}\right) \arrow{r} &  0 
        \end{tikzcd}
    \end{center}
}
    Let $\xi \in 0^{\rho}_{H^{d-i}_\fm(R/\fq_{\leq i})}$. As $x_{i+1}\xi =0$ we have $\xi = \delta_0(\xi')$ for some $\xi' \in H^{d-i-1}_\fm(R/\fq_{\leq i+1})$. For $e \gg 0$, $\rho^e(\xi) = 0$, thus $\rho^e(\xi')$ lifts to $\xi_e'' \in H^{d-i-1}_\fm(R/\fq_{\leq i}^{\fbp{p^e}})$. It suffices to see that this element is nilpotent under the relative Frobenius. However, one can easily show more strongly that for all $i < d$ and $1 \le t \le d-1$, $H^{d-i-t}_\fm(R/\fq_{\leq i})$ is nilpotent under $\rho^e$ for $e \gg 0$ from  \cite[Theorem 4.2]{PQ19}, as $R$ is Buchsbaum and every system of parameters is filter-regular.
\end{proof}

\subsection{\texorpdfstring{$F$}{F}-Buchsbaum and tight Buchsbaum Cohen--Macaulay rings}
Finally, we consider Condition \ref{c} of \cref{thm:mainthm-restated}, i.e. a characterization in the Cohen--Macaulay setting which will allow for practical testing of these notions in the next section. Recall that a \emph{parameter test element} for tight closure is an element $c\in R^\circ$ such that $c\fq^*\subseteq \fq$ for all parameter ideals $\fq\subseteq R$. The characterization of Cohen--Macaulay tight Buchsbaum rings below relies on the existence of a parameter ideal generated by test elements. This occurs for excellent reduced local rings that have isolated non-$F$-rational loci by \cite[Theorem 3.9]{Vel95}, a condition which is satisfied by tight Buchsbaum rings but not necessarily by $F$-Buchsbaum rings (see e.g. \cref{ex:CGM}). The characterization of the latter thus works with a weaker replacement.
\begin{thm}\label{cor:gor}
    Let $(R,\fm,k)$ be an excellent Cohen--Macaulay local ring of the prime characteristic $p>0$. 
    \begin{enumerate}[label=(\alph*)]
        \item $R$ is tight Buchsbaum if and only if there exists a parameter ideal $\fq\subseteq R$ generated by parameter test elements such that $\fq:\fq^*\supseteq\fm$.\label{cor:gor-1}
        \item  The ring $R$ satisfies condition \ref{a} of \cref{thm:mainthm-restated} if and only if there exists a parameter ideal $$\mathfrak{Q}\subseteq \bigcap_{\fq} (\fq:\fq^F)=:J$$ (where the intersection is taken over all parameter ideals $\fq\subseteq R$) such that $\fm\subseteq \mathfrak{Q}:\mathfrak{Q}^F$. \label{cor:gor-2}
    \end{enumerate}

As Cohen--Macaulay rings are weakly $F$-nilpotent, \cref{cor:gor} shows all conditions of \cref{thm:mainthm} are equivalent in this setting.
\end{thm}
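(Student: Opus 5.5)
The plan rests on one observation about Cohen--Macaulay rings. For every parameter ideal $\fq=(x_1,\dots,x_d)$ we have $\fq^{\lim}=\fq$, so $\qs=\fq^F$. Moreover $R/\fq\hookrightarrow H^d_\fm(R)$, compatibly with the Frobenius actions in the directed system $R/\fq_n\xrightarrow{\cdot x}R/\fq_{n+1}$. This gives
\[
\fq^F/\fq=(R/\fq)\cap 0^F_{H^d_\fm(R)}\quad\text{and}\quad \fq^*/\fq=(R/\fq)\cap 0^*_{H^d_\fm(R)}.
\]
Condition \ref{a} therefore says $\fm\fq^F\subseteq\fq$ for all $\fq$. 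In module form this reads $\fm\cdot 0^F_{H^d_\fm(R)}=0$, because every element of $H^d_\fm(R)$ lies in the image of some $R/\fq_n$. Likewise, by \cref{thm:ma-pham}, tight Buchsbaum is equivalent to $\fm\cdot 0^*_{H^d_\fm(R)}=0$.

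For \ref{cor:gor-2}, the forward direction is easy. If $\fm\fq^F\subseteq\fq$ for all $\fq$, then $J\supseteq\fm$, so any parameter ideal $\mathfrak Q\subseteq\fm$ lies in $J$ and satisfies $\fm\subseteq\mathfrak Q:\mathfrak Q^F$. For the converse, fix such a $\mathfrak Q=(y_1,\dots,y_d)$ and an arbitrary $\fq$, and set $y=y_1\cdots y_d$. I would show $\fm\cdot 0^F_{H^d_\fm(R)}=0$ using the system of powers $\mathfrak Q_n$. Since $R$ is Cohen--Macaulay, $\mathfrak Q_n:\mathfrak Q_n^F$ should be controlled via the isomorphism $R/\mathfrak Q\xrightarrow{y^{n-1}}R/\mathfrak Q_n$ onto $(\mathfrak Q_n:\mathfrak Q)/\mathfrak Q_n$. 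Take $\xi\in 0^F_{H^d_\fm(R)}$ and represent it as $[z+\mathfrak Q_n]$ with $z\in\mathfrak Q_n^F$. Because $\mathfrak Q\subseteq J\subseteq \mathfrak Q_n:\mathfrak Q_n^F$, we get $y_jz\in\mathfrak Q_n$, so $z\in\mathfrak Q_n:\mathfrak Q=\mathfrak Q_n+y^{n-1}R$. Write $z\equiv y^{n-1}w$. Then $y^{n-1}w$ is Frobenius-closed-zero modulo $\mathfrak Q_n$, and Cohen--Macaulay colon cancellation ($\mathfrak Q_n^{[q]}:y^{(n-1)q}=\mathfrak Q^{[q]}$) gives $w\in\mathfrak Q^F$. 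Hence $\xi=[w+\mathfrak Q]$ lies in the image of $\mathfrak Q^F/\mathfrak Q$, and it is killed by $\fm$ because $\fm\mathfrak Q^F\subseteq\mathfrak Q$. So $\fm\cdot 0^F_{H^d_\fm(R)}=0$, which gives $\fm\fq^F\subseteq\fq$ for every $\fq$, that is, condition \ref{a}.

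Part \ref{cor:gor-1} is the same argument with $^*$ in place of $^F$. Parameter test elements generating $\fq$ play the role of $\mathfrak Q\subseteq J$, and the cancellation $(\mathfrak Q_n^{[q]}:cy^{(n-1)q})$ runs the same way with a multiplier $c$. For the forward direction I would use that a tight Buchsbaum ring has an isolated non-$F$-rational locus, so by \cite[Theorem 3.9]{Vel95} the parameter test ideal is $\fm$-primary. It contains a parameter ideal, and that ideal works since $\fm\fq^*\subseteq\fq$ for all $\fq$.

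The final sentence about the equivalence of all conditions follows by combining \ref{cor:gor-2} with \cref{thm:(b)=>(d),thm:(d)=>(c),thm:(c)=>(b)WNil}, since Cohen--Macaulay rings are weakly $F$-nilpotent. The main obstacle I expect is the colon computation $\mathfrak Q_n:\mathfrak Q=\mathfrak Q_n+y^{n-1}R$ together with the cancellation step showing $w\in\mathfrak Q^F$. Both need the Cohen--Macaulay hypothesis on Frobenius powers, via flatness-type colon identities for regular sequences, and must be arranged so that only membership in $J$, not a full test-element property, is used.
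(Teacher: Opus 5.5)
Your proposal is correct and follows essentially the paper's route. Your key computation (from $y_jz\in\fQ_n$ deduce $z\in\fQ_n+y^{n-1}R$, then cancel $y^{(n-1)q}$ against $\fQ_n^{[q]}$ to get $w\in\fQ^F$) is exactly the paper's proof that $\fQ_t^F=\fQ_t+x^{t-1}\fQ^F$. The paper packages this as $\fQ:\fQ^F=\fQ_t:\fQ_t^F$ for all $t$, hence $=J$ by cofinality, rather than as $0^F_{H^d_\fm(R)}=\im(\fQ^F/\fQ\to H^d_\fm(R))$. Part (a) is the same computation for tight closure together with $\tau_{\text{par}}(R)=\operatorname{Ann}_R 0^*_{H^d_\fm(R)}$ and \cref{thm:ma-pham}.

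One small simplification: the appeal to V\'elez in the forward direction of (a) is unnecessary. \cref{thm:ma-pham} already gives $\fm\subseteq\tau_{\text{par}}(R)$, and every parameter of a Cohen--Macaulay ring lies in $R^\circ$, so any parameter ideal works.
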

\begin{proof} 
For \ref{cor:gor-1}, given a parameter ideal $\fq = (x_1, \ldots, x_d)$ we have \[ \varinjlim_n \dfrac{\fq_n^*}{\fq_n} \cong 0^*_{H^d_{\fm}(R)},\] where as usual $\fq_n = (x_1^n, \ldots, x_d^n)$. Further, all maps in direct systems are injective as $R$ is Cohen--Macaulay. Hence $\tau_{\text{par}}(R) = \cap_{\fq} (\fq : \fq^*) = \mathrm{Ann}_R 0^*_{H^d_{\fm}(R)}$. When $\fq$ is a parameter test ideal we have $\tau_{par}(R) = \fq : \fq^*$. The result then follows from \Cref{thm:ma-pham}.

For \ref{cor:gor-2}, note that the only if direction is clear. For the converse, let $\mathfrak{Q} = (x_1,\dots, x_d)$ be a parameter ideal where each $x_i\in J$. For $t\geq 1$ let $\mathfrak{Q}_t = (x_1^t,\dots, x_d^t)$. Since $\{\mathfrak{Q}_t\}$ is cofinal with the set of powers of the maximal ideal $\fm$, by a standard argument we have $J = \bigcap_t (\mathfrak{Q}_t:\mathfrak{Q}_t^F)$. We now claim that $\mathfrak{Q}:\mathfrak{Q}^F = J$, and by the above it suffices to show that 
\begin{align}
    \fQ:\fQ^F = \fQ_t:\fQ_t^F\label{eq:gor-1}
\end{align}
for all $t\geq 1$. Letting $x:=x_1\cdots x_d$ we first show that 
  \begin{align}
      \fQ_t^F = \fQ_t+x^{t-1}\fQ^F.\label{eq:gor-2}
  \end{align}  
    To that end, let $a\in\fQ_t^F$ and note that by the assumption that $\fQ\subseteq J$ we have $x_i\fQ_t^F\subseteq \fQ_t$ for all $i$. Hence 
    $$a\in \fQ_t:\fQ = (x_1^t,\dots, x_d^t,x^{t-1}).$$ Now write $a=b+cx^{t-1}$ where $b\in\fQ_t$ and $c\in R$. Then $cx^{t-1}=a-b\in\fQ_t^F$ so $c^{p^e} x^{p^e(t-1)}\in \fQ_t^{\fbp{p^e}} = \fQ_{tp^e}$ for all $e\gg 0$. Then $c^{p^e}\in \fQ_{tp^e}:x^{p^e(t-1)} = \fQ^{\fbp{p^e}}$. This establishes the equality \eqref{eq:gor-2}. But then \eqref{eq:gor-1} follows because $r\fQ_t^F\subseteq\fQ_t$ if and only if $r x^{t-1} \fQ^F\subseteq \fQ_t$, which in turn holds if and only if $r\fQ^F \subseteq \fQ_t:x^{t-1} = \fQ$. We have shown that $\fQ:\fQ^F = J$ as desired.
\end{proof}

\section{Examples}\label{sec:examples} 
We conclude in this section by illustrating that many of the arrows in the left half of the diagram in \cref{diagram:implications} cannot be reversed, even in the case of an isolated singularity.

\begin{xmp}[A tight Buchsbaum ring which is not CMFI]\label{xmp: F-Buchs but not CMFI}
Let $k$ be a perfect field of characteristic $p>0$. Let $R=k\llbracket x,y,z,w\rrbracket/(xz,xw,yz,yw)$. Clearly $R$ is $F$-pure and has an isolated singularity by the Jacobian criterion, so $R$ is tight Buchsbaum by \cite[Theorem 3.4]{BMS18}.
\end{xmp}

\begin{xmp}[A Cohen--Macaulay non-$F$-injective tight Buchsbaum ring]
    Let $p\equiv 2\mod 3$ and let $R = \mathbb{F}_p\llbracket x,y,z\rrbracket/(x^3+y^3+z^3)$. Then $\tau_{\text{par}}(R) = \fm$, so $R$ is tight Buchsbaum by \cref{thm:ma-pham}. However, $R$ is not $F$-injective by Fedder's criterion \cite{Fed83}.
\end{xmp}

The following example is an instance of the weakly normal construction from \cite[Appendix]{CGM89}.
\begin{xmp}[An $F$-Buchsbaum ring which is not tight Buchsbaum]\label{ex:CGM}
        Let $k =\overline{k}$ be an algebraically closed field of characteristic $p>2$, let $K=k(u,v)$, and let $L = K[y]/(y^{2p}+y^p u -v)$. Then the ring $R$ given by the pullback diagram 
    \begin{equation*}
        \begin{tikzcd}
            K+(x_1,x_2)L\llbracket x_1,x_2\rrbracket\arrow[r]\arrow[d,twoheadrightarrow]& L\llbracket x_1,x_2\rrbracket\arrow[d,twoheadrightarrow]\\
            K\arrow[r]& L
        \end{tikzcd}
    \end{equation*}
is $F$-injective (by \cite[Example 3.5]{MSS17}) but not $F$-pure. It's not $F$-pure because it's not (WN1); that is, the normalization map $R\to L\llbracket x_1,x_2\rrbracket$ ramifies in codimension one \cite[Theorem 7.3]{SZ13}. Note that $R$ is Buchsbaum using either \cite[Corollary 1.3]{Ma15} or \cite[Theorem 3.1]{GW85}. Thus, $R$ is $F$-Buchsbaum by \Cref{rmk: f-inj  + buchs = f-buchs}. To see that $R$ is not tight Buchsbaum, note that $R$ is not regular in codimension one since $R$ is not (WN1). It follows that the non-$F$-rational locus of $R$ is strictly bigger than $\{\fm\}$.
\end{xmp}

\begin{xmp}[An $F$-Buchsbaum ring which is neither $F$-injective nor tight Buchsbaum]  \label{Hyp4}
    Let $$R = \mathbb{F}_p\llbracket x,y,z\rrbracket/(x^4+y^4+z^4)$$ with $p\equiv 1\mod 4$. The (parameter) test ideal is given by $\tau_{\text{par}}(R) = (x,y,z)^2$ (see e.g. \cite[209]{Hun98}), so $R$ is not tight Buchsbaum by \cite[Theorem 5.1]{MQ22}. By Fedder's criterion, $R$ is not $F$-injective.
    
    However, we claim that $R$ is $F$-Buchsbaum. To see this, consider the parameter ideal $\fq = (y^2,z^2)R$. Then one can show $\fq+(x^3yz) = \fq^F$ and that $\fq:\fq^F =  \fm$. Since $y^2$ and $z^2$ are test elements, \cref{cor:gor}\ref{cor:gor-2} implies that $R$ is $F$-Buchsbaum.
\end{xmp}

\begin{xmp}[A $2$-dimensional normal Gorenstein ring which is not $F$-Buchsbaum]
    Let $R = \mathbb{F}_2[ x,y,z]/(x^5+y^5+z^5)$ with homogeneous maximal ideal $\fm=(x,y,z)$. It is easily verified that the Frobenius map on $[H^2_\fm(R)]_0$ is nilpotent; since $R$ has an isolated singularity at $\fm$, we see that $R_\fm$ is $F$-nilpotent by \cite[Example 2.8(1)]{ST17}. It follows that $A:=\widehat{R_\fm} =\mathbb{F}_2\llbracket x,y,z\rrbracket/(x^5+y^5+z^5)$ is $F$-nilpotent by \cite[Proposition 2.8(4)]{PQ19}. Consider the two parameter ideals $\fq_1=(x,y)A$ and $\fq_2=(x^2,y^2)A$. Note that $\fq_1^F = \fq_1^* = (x,y,z^2)$ from \cite[Theorem A]{PQ19} and \cite[p. 209]{Hun98}. One checks, however, that $z^3,xyz^2\in\fq_2^F$ so that
\begin{align*}
    \ell_A(\fq_1^F/\fq_1) & = 3, \text{ and}\\
    \ell_A(\fq_2^F/\fq_2) & \geq \ell_A\left(\frac{(y^{2},\,x^{2},\,z^{3},\,x\,y\,z^{2})}{\fq_2}\right) = 9
\end{align*}
from which we conclude that $A$ is not $F$-Buchsbaum.
\end{xmp}

\begin{xmp}[A reduced Buchsbaum ring which is not $F$-Buchsbaum]
`Let $(R,\fm)$ be a reduced Cohen--Macaulay local ring which is not $F$-injective and whose non-$F$-injective locus is given by $V(\fa)$. Then the non-$F$-injective locus of $R\llbracket x\rrbracket$ is defined by the non-maximal ideal $\fa R\llbracket x\rrbracket$, hence $R\llbracket x\rrbracket$ is not $F$-Buchsbaum.
\end{xmp}

\printbibliography
\end{document}